\tikzset{
>=stealth',
  punktchain/.style={
    rectangle,  
     fill=cyan!40,
    draw=black, very thick,
    text width=12em, 
    minimum height=2em, 
    text centered, 
    on chain},
  line/.style={draw, thick, <-},
  element/.style={
    tape,
    top color=white,
    bottom color=blue!50!black!60!,
    minimum width=8em,
    draw=blue!40!black!90, very thick,
    text width=10em, 
    minimum height=2.5em, 
    text centered, 
    on chain},
  every join/.style={->, thick,shorten >=1pt},
  decoration={brace},
  tuborg/.style={decorate},
  tubnode/.style={midway, right=2pt},
}
\newtheorem{rem}{Remark}
\newtheorem{thm}{Theorem}
\newtheorem{alg}{\small{Algorithm}}
\def\be{\begin{equation}}
\def\ee{\end{equation}}
\def\bea{\begin{eqnarray}}
\def\eea{\end{eqnarray}}
\def\RR{\mathbb R}
\def\IT{I_{\theta}}
\begin{document}

\title{Monte Carlo gPC methods for diffusive kinetic flocking models with uncertainties}

\author{Jos\'e Antonio Carrillo \\
{\small Department of Mathematics}\\
{\small  Imperial College London, SW7 2AZ, UK}\\
{\small\tt carrillo@imperial.ac.uk}\and
Mattia Zanella \\
{\small Department of Mathematical Sciences ``G. L. Lagrange''} \\
		{\small Politecnico di Torino, Torino, Italy} \\
		{\small\tt mattia.zanella@polito.it}}

\date{}
\maketitle
\begin{abstract}
In this paper we introduce and discuss numerical schemes for the approximation of kinetic equations for flocking behavior with phase transitions that incorporate  uncertain quantities. This class of schemes here considered make use of a Monte Carlo approach in the phase space coupled with a stochastic Galerkin expansion in the random space. The proposed methods naturally preserve the positivity of the statistical moments of the solution and are capable to achieve high accuracy in the random space. Several tests on a kinetic alignment model with self propulsion validate the proposed methods both in the homogeneous and inhomogeneous setting, shading light on the influence of uncertainties in phase transition phenomena driven by noise such as their smoothing and confidence band. 

\medskip

\textbf{Keywords}: uncertainty quantification, stochastic Galerkin, collective behaviour.\\

\textbf{MSC}: 35Q83, 65C05, 65M70. 
\end{abstract}

\section{Introduction}
\label{sec1}

Uncertainty quantification (UQ) for partial differential equations describing real world phenomena gained in recent years lot of momentum in various communities. Without intending to review the very huge literature on this topic we mention \cite{APZa,CPZ,DPZ,HJ,PZ1,TZ,TZ2,X,ZL,ZJ} and the references therein. One of the main advantages of this approach relies in its capability to provide a sound mathematical framework to reproduce realistic experiments through the introduction of the stochastic quantities, reflecting our  incomplete information on some features on the systems' modelling.  This argument is especially true for the description of emergent social structures in interacting agents' systems in socio-economic and life sciences. Common examples are the emergence of consensus phenomena in opinion dynamics, flocking and milling patterns in swarming of animals or humans and the formation of stable wealth distributions in economic systems, see \cite{PT}. It is worth observing how for these models we can have at most statistical information on initial conditions and on the modeling parameters, which are in practice substituted by empirical social forces embedding a huge variability, see for example \cite{Ball_etal}.

From a mathematical viewpoint, the kinetic equations we are interested in are nonlinear Vlasov-Fokker-Planck equations depending on random inputs taking into account uncertainties in the interaction terms or in the boundary conditions. These equations arise in the description of collective phenomena modeling the evolution of a distribution function $f=f(\theta,x,v,t)$, $t\ge 0$, $x\in\RR^{d_x}$, $v\in\RR^{d_v}$, $d_x,d_v\ge 1$, and $\theta\in I_{\theta}\subseteq\RR$ a random field, according to
\be\begin{split}\label{eq:MF_general}
&\partial_t f+v\cdot\nabla_x f = \nabla_v \cdot \left[ \mathcal B[f]f+\nabla_v (Df) \right],
\end{split}\ee
where $\mathcal B[\cdot]$ is a non--local operator of the form 
\begin{equation}
\label{eq:Bf}
\mathcal B[f](\theta,x,w,t) =  S(\theta,v) + \int_{\RR^{d_x}}\int_{\RR^{d_v}}P(\theta,x,x_*)(v-v_*)f(\theta,x_*,v_*,t)dv_*dx_*,
\end{equation}
being $P\ge0$ for all $v,v_*\in\RR^{d_v}$ and $D=D(\theta)\ge 0$ is an uncertain constant diffusion that depends on the introduced uncertainty.

As a follow-up question to the progress of the analytical understanding of real phenomena, the existence of phase transitions driven by noise represents a deeply fascinating issue, see \cite{CGPS,CP,DFL,DCBC,GPY,GP} and the references therein. The notion of phase transition has been fruitfully borrowed from thermodynamics to highlight the phase change of the system under specific stimuli. In particular, it is of interest the emergence of patterns in the collective dynamics for critical strengths of noise as in the classical Kuramoto model \cite{BGP10,CCP,Kur81,SSK88} or in collective behavior \cite{BCCD,GPY}. In the present paper we concentrate on a flocking model for interacting agents with self-propulsion and diffusion where the parameters are assumed to be stochastic. This model has been recently investigated in \cite{ABCD,BCCD,BD,BC} in absence of uncertainties. The introduction of uncertain quantities points in the direction of a more realistic description of the underlying processes and helps us to compute possible deviations from the prescribed deterministic behavior. 

Suitable numerical methods that preserve the positivity of the distribution function are developed and are based on the so-called of Monte Carlo generalized polynomial chaos (MCgPC) methods. The introduced class of schemes is based on a Monte Carlo approach in the phase space that is coupled with stochastic Galerkin decomposition in the random space \cite{HJ,HJX,PT,X}. This method has been recently proposed in \cite{CPZ} in the case of zero diffusion and it will be extended for models incorporating noise in the present setting. Furthermore, beside the natural positivity preservation, the resulting methods are spectrally accurate in the random space for sufficiently regular uncertainties.  Furthermore, the adoption of a spectrally accurate methods in the random space is particularly efficient if compared with other non-intrusive approaches. 

The rest of this paper has the following structure: in Section \ref{sect:flocking} we introduce in detail the class of kinetic flocking models of interest, the issue of phase transition will be treated with a particular focus on the relation between self-propulsion strength and noise intensity in the case they both depend on uncertain quantities. Section \ref{sect:MCgPC} focuses on the construction of Monte Carlo generalized polynomial chaos methods, a reduction in the computational complexity is here achieved through a Monte Carlo mean-field algorithm discussed in previous works. Finally, Section \ref{sect:num} is devoted to numerical test for the validation of the proposed schemes. Here continuous and Monte Carlo schemes will be compared to show the effectiveness of the MCgPC approach.

\section{Phase transitions in kinetic flocking models with uncertainties}\label{sect:flocking}

Kinetic models for aggregation-diffusion dynamics encountered extensive investigation in recent years \cite{BCCD,BD,BCM,CCH,CFRT,CFTV,DM,DFT}. This class of models describes the aggregate behaviour of large systems of self-propelled particles for which stable patterns emerge asymptotically. Nevertheless, at the present times, the study of the influence of ineradicable uncertainties in the modeling parameters is quite an unexplored area. We mention in this direction \cite{APZa,CPZ,HaJin,HJJ,TZ}. 

Let us consider first a kinetic flocking model for self-propelled particles with both state-independent interactions and uncertain diffusion given by \eqref{eq:MF_general} with the following choice
\[
P(\theta,x,x_*)  = \delta(x-x_*), \qquad S(\theta,v) = \alpha(\theta) (|v|^2-1)v, \qquad D= D(\theta),
\]
being $\delta(x-x_*)$ the Dirac delta distribution centered in $x_*\in\mathbb R^{d_x}$. The model of interest is now characterized by a localised Vlasov-Fokker-Planck equation with random inputs for the dynamics of  $f = f(\theta,x,v,t)$ and having the following form 
\be\label{eq:kinetic_CSM}
\partial_tf + v\cdot\nabla_x f = \nabla_v \cdot \left[ \alpha(\theta)(|v|^2-1)vf + \rho_f(v-u_f)f + D(\theta)\nabla_v f \right],
\ee
where $\alpha(\theta)\ge0$, $D(\theta)\ge0$ are respectively self--propulsion strength and intensity of the diffusion operator, and where $u_f$ is the momentum of the system which is not conserved in time due to the presence of the self-propulsion term
\[
\rho_f(\theta,x,t) = \int_{ \RR^{d_v}} f(\theta,x,v,t)dv,\qquad \rho_f (\theta,x,t)u_f(\theta,x,t) = \int_{\RR^{d_v} }v f(\theta,x,v,t)dv. 
\]
It seems worth stressing that, in the present setting, the role of the additive diffusion operator $\Delta_v f$ in \eqref{eq:MF_general}, representing the impact of unpredictable events, is strongly different from that of the uncertainties introduced in the definition of the model parameters. The velocity diffusion term is, in fact, representative of all the possible modifications of the dynamics not modeled in a structural way, whereas $\theta\in I_{\theta}$ summarizes all source of modifications in the \emph{prescribed dynamics}. We point the reader to \cite{TZ} for a more focussed discussion in a related setting.

In the following we will treat separately the case space-homogeneous case and then we will provide some insights on the non-localised inhomogeneous setting. 

\subsection{Space homogeneous problem}\label{sect:hom}
In \cite{BCCD} the authors investigated the space homogeneous version of \eqref{eq:kinetic_CSM} in the deterministic setting. It has been proven that a phase transition between the so-called polarized and unpolarized motion takes place as the noise intensity $D$ increases and for a specific range of the values of the self-propulsion strength $\alpha$. At a difference with the cited results, the present formulation of the model embeds from the very beginning the presence of uncertain quantities. 

Let us consider for simplicity the case of unitary mass, so that $f$ is a probability density for all times $t\ge 0$. First, we observe that \eqref{eq:kinetic_CSM} can be rewritten as a gradient flow. In fact, if we define 
\[
\xi (\theta,v,t)= \Phi(\theta,v) + (U*f)(\theta,v,t) + D(\theta)\log f(\theta,v,t),
\]
with $U(v)$ a interaction potential, given by $U(v)=\tfrac{|v|^2}{2}$, and $\Phi(v)$ a confining potential of the form
\[
\Phi(\theta,v) = \alpha(\theta) \left(\dfrac{|v|^4}{4}-\dfrac{|v|^2}{2} \right),
\]
the equation reads
\[
\partial_t f(\theta,v,t) = \nabla_v \cdot \left( f(\theta,v,t)\nabla_v \xi(\theta,v,t) \right). 
\]
A free energy functional which dissipates along solution is defined for all $\theta\in I_{\theta}$ as follows
\[
\begin{split}
\mathcal E(\theta,t) =& \int_{\RR^{d_v}} \left( \alpha(\theta)\dfrac{|v|^4}{4}+(1-\alpha(\theta))\dfrac{|v|^2}{2} \right) f(\theta,v,t)dv -\dfrac{1}{2}|u_f|^2\\
&+D(\theta)\int_{\RR^{d_v}}f(\theta,v,t)\log f(\theta,v,t)dv,
\end{split}
\]
with 
\[
u_f(\theta,t) = \int_{\RR^{d_v}}vf(\theta,v,t)dv.
\]
Stationary solutions of the space--homogeneous problem satisfy the identity $\nabla_v \xi(\theta,t)=0$ for all $\theta\in I_{\theta}$ and have the form 
\be\label{eq:ss_hom}
f^{\infty}(\theta,v) =C \exp\left\{ -\dfrac{1}{D(\theta)}\left[ \alpha(\theta)\dfrac{|v|^4}{4}+(1-\alpha(\theta))\dfrac{|v|^2}{2}-u_{f^{\infty}}\cdot v \right]\right\},
\ee
being $C>0$ a normalization factor. It is possible to prove the following result, see \cite[Theorem 2.1]{BCCD}.
\begin{thm}
The space--homogeneous nonlinear Fokker--Planck equation
\[
\partial_t f= \nabla_v \cdot \left[ \alpha(\theta)(|v|^2-1)vf + (v-u_f(\theta,t))f + D(\theta)\nabla_v f \right],
\]
exhibits a phase transition in the following sense:
\begin{itemize}
\item For small enough diffusion $D(\theta)\rightarrow 0$ for all $\theta\in I_{\theta}$  there is a function $u=u(D(\theta))$ with $\lim_{D\rightarrow 0}u(D(\theta)) = 1$, such that $f^{\infty}(\theta,v)$ with $u=(u(D(\theta)),0,\dots,0)$ is a stationary solution of the original problem. 
\item For large enough diffusion coefficient $D$ the only stationary solution is the symmetric distribution given by \eqref{eq:ss_hom} with $u_{f}\equiv 0$. 
\end{itemize}
\end{thm}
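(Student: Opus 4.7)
The crucial observation is that the space-homogeneous equation in the statement does not couple different realizations of the random parameter: for each fixed $\theta\in I_\theta$ it is a deterministic nonlinear Fokker--Planck equation with coefficients $\alpha(\theta)$ and $D(\theta)$. The plan is thus to work pointwise in $\theta$, reducing the claim to the deterministic analysis of \cite{BCCD}. Fixing $\theta$, any stationary state has the self-consistent form \eqref{eq:ss_hom}, so the problem amounts to solving the fixed-point condition $u_{f^\infty}=\int v\,f^\infty\,dv$. By rotational symmetry one may assume $u_{f^\infty}=u\,e_1$ with $u\ge 0$, and the problem collapses to the scalar equation $u=G(u;\alpha,D)$ with
\[
G(u;\alpha,D)=\frac{\int v_1 e^{-V(v;u)/D}\,dv}{\int e^{-V(v;u)/D}\,dv},\qquad V(v;u)=\alpha\frac{|v|^4}{4}+(1-\alpha)\frac{|v|^2}{2}-u\,v_1.
\]
Since $G(0;\alpha,D)=0$ by symmetry, the unpolarized solution $u=0$ is always admissible.

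For the small-noise regime I would combine Laplace's method with the implicit function theorem. In the formal limit $D\to 0$ the measure concentrates at the unique minimizer $v^\ast(u)\,e_1$ of $V(\cdot;u)$, determined by $(\alpha v^{\ast 2}+1-\alpha)v^\ast=u$, and self-consistency $G(u;\alpha,0)=v^\ast(u)=u$ reduces to $\alpha u(u^2-1)=0$, whose nontrivial root is $u=1$. For $D>0$ small, a Laplace expansion around the minimizer gives $G(u;\alpha,D)=v^\ast(u)+O(D)$, with $(v^\ast)'(1)=1/(2\alpha+1)\neq 1$ for $\alpha>0$; this nondegeneracy lets me apply the implicit function theorem to $H(u,D)=G(u;\alpha,D)-u$ at the point $(1,0)$, producing a smooth branch $u=u(D(\theta))$ with $u(D(\theta))\to 1$ as $D(\theta)\to 0$. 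The resulting profile given by \eqref{eq:ss_hom} with $u_{f^\infty}=u(D(\theta))\,e_1$ is by construction a stationary state, yielding the desired polarized solution.

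For the large-noise regime the task is to exclude nontrivial fixed points. Rescaling $v=D^{1/4}w$ makes the reference measure at $u=0$ asymptotic to the radial distribution proportional to $e^{-\alpha|w|^4/4}$, so that
\[
\partial_u G(0;\alpha,D)=\frac{\langle v_1^2\rangle_{u=0}}{D}=O(D^{-1/2})\to 0\quad\text{as } D\to\infty.
\]
Combined with concavity of $u\mapsto G(u;\alpha,D)$ on $[0,\infty)$, obtained by differentiating the log-Laplace transform of the tilted quartic Gibbs measure and exploiting its log-concavity along $e_1$, the graph of $G$ lies strictly below the diagonal for $u>0$ at large $D$, leaving $u=0$ as the only solution. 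An alternative route uses the gradient-flow structure: for $D$ large, the free energy $\mathcal E$ becomes strictly convex in a neighbourhood of the symmetric Gibbs measure, which is thereby its unique critical point.

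The main obstacle is precisely the uniqueness claim at large $D$: the local linearization at $u=0$ does not suffice and has to be promoted to a global rigidity statement, either by establishing concavity of $G$ on $[0,\infty)$ through careful control of moments of tilted quartic Gibbs measures, or by a global convexity argument for $\mathcal E$ along candidate stationary profiles, both of which require a priori tail bounds on $f^\infty$ to rule out spurious minimizers.
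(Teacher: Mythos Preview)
The paper does not supply its own proof of this theorem: immediately before the statement it writes ``It is possible to prove the following result, see \cite[Theorem 2.1]{BCCD}'' and then moves on. So there is no in-paper argument to compare your proposal against; the theorem is quoted from the deterministic analysis of Barbaro--Ca\~nizo--Carrillo--Degond, applied pointwise in $\theta$.

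Your reduction to a pointwise-in-$\theta$ deterministic problem, the self-consistent fixed-point formulation $u=G(u;\alpha,D)$, and the Laplace/implicit-function argument for the small-$D$ branch are exactly the ingredients used in \cite{BCCD}; your derivative check $(v^\ast)'(1)=1/(2\alpha+1)\neq 1$ is the correct nondegeneracy condition. Where your sketch is weakest---and you flag this yourself---is the large-$D$ uniqueness: the slope estimate $\partial_uG(0;\alpha,D)=O(D^{-1/2})$ is right, but the claimed concavity of $G$ on $[0,\infty)$ via ``log-concavity of the tilted quartic Gibbs measure along $e_1$'' is not immediate (the potential $V$ is not globally convex when $\alpha>1$, so Brascamp--Lieb-type arguments need care), and the free-energy convexity route requires a genuine global argument, not just a local one near the symmetric state. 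In \cite{BCCD} the large-$D$ part is handled by a direct moment/compatibility analysis of the one-dimensional reduced equation rather than by a clean concavity statement; if you want a self-contained proof you should consult that reference for the precise mechanism.
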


This behavior is reminiscent of the one observed for the Vicsek model \cite{BCCD,DFL}, where agents move with a constant speed and interact with their neighbors via a local alignment force and are subject to noise action. In particular, a critical noise intensity has been discovered whose value determines a phase transition between the ordered states and a chaotic state characterized by a null asymptotic average velocity of the system of agents. The critical noise value is theoretically demonstrated in \cite{DFL} for the Vicsek model while for the present model there is a strong numerical evidence for its existence \cite{BCCD}. 

\subsection{Space inhomogeneous case}

In the non-localized inhomogeneous setting sharp results on analogous phase transitions are sill non present in the literature due to the additional difficulties of the models. In the zero diffusion limit and without self-propulsion forces, small variations of the arguments in \cite{CFRT} prove that for a Cucker-Smale type interaction
\be\label{eq:P_CS}
P(\theta,x,x_*) = \dfrac{H(\theta)}{\left( 1+ |x-x_*|^2\right)^{\gamma}}, \qquad H\ge 0,
\ee
unconditional alignment, i.e. convergence to a profile travelling with the same mean velocity, emerges in the case $\gamma\le 1/2$ without conditions on the initial configuration of the system or on its dimensionality both at the kinetic and particle level \cite{CFRT,CS}. In this case the asymptotic distribution is a Dirac delta in the velocity space, meaning that for large times the agents share the same velocity and that they form a group with frozen mutual distances. At the particle level, the case $\gamma>1/2$ leads to a conditional flocking, i.e. alignment emerges for provable initial configurations of the system \cite{CS}.  Initial density conditions to achieve asymptotic alignment for $\gamma>1/2$ are still not proven,  in general we may expect flocking for high mean-density distributions \cite{CFTV}. 

In the following we will present a numerical insight for the case of a VFP model with Cucker-Smale type interaction forces \eqref{eq:P_CS} in dimension $d_x=1$ and $d_v=1$, self-propulsion forces and uncertain diffusion coefficient. In particular, numerical results obtained with accurate numerical schemes highlight that a phase transition can occur also in this regime, shedding light on the deep interplay between alignment forces and noise strength in more general settings with not localized alignment \cite{CS,MT}.

\section{Stochastic Galerkin methods for  kinetic equations}\label{sect:MCgPC}
We introduce Stochastic Galerkin (SG) numerical methods with applications to the  nonlinear Vlasov-Fokker-Planck (VFP) equation \eqref{eq:MF_general}. We discuss the class of stochastic Galerkin (SG) methods and, in particular, we concentrate on the  generalized Polynomial Chaos (gPC) decomposition  \cite{DPZ,X,XK,ZJ}. These methods gained increased popularity in recent years since they guarantees spectral accuracy in the random space under suitable regularity conditions. 

In our schemes, we exploit Monte Carlo methods for the approximation of the numerical solution of the (VFP) in the phase space taking advantage of particle based reformulation of the problem that converges in distribution to the solution for an increasing number of agents. The core idea then is to apply SG-gPC techniques for the efficient approximation of the random field in the resulting MC approximation. We highlight that the combination of the two approaches leads to a positive approximation of statistical moments of the solution even in the nonlinear case. 

In the following we will derive a SG-gPC scheme for the continuous problem and we will consider specific formulation for the so-called Monte Carlo gPC methods (MCgPC), see  \cite{CPZ}. 

\subsection{Preliminaries on SG-gPC expansion}

In this section we derive a SG-gPC approximation for the uncertain nonlinear VFP equation \eqref{eq:MF_general} with nonlocal drift $\mathcal B[\cdot]$ having the structure \eqref{eq:Bf}. 
Let $(\Omega,F,P)$ be a probability space where as usual $\Omega$ is the sample space, $F$ is a $\sigma-$algebra and $P$ a probability measure, and let us define the random variable 
\[
\theta: (\Omega,F) \rightarrow (I_{\theta},\mathcal{B}_{I_\theta}),
\]
with $I_\theta\subseteq \RR$ and $\mathcal{B}_{I_\theta}$ the Borel set, and with known probability density function $\Psi(\theta):I_\theta\rightarrow \mathbb R^+$. Hence, we consider the linear space $\mathbb P_M$ of polynomials of degree up to $M$, which is generated by a family of orthonormal polynomials $\{\Phi_h(\theta)\}_{h=0}^M$ such that
\[
\mathbb{E}\left[ \Phi_h(\theta)\Phi_k(\theta) \right] = \int_{I_{\theta}} \Phi_h(\theta)\Phi_k(\theta)\Psi(\theta)d\theta = \delta_{hk},
 \]
being $\delta_{hk}$ the Kronecker delta function. Assuming that $\Psi(\theta)$ has finite second order moment, we can approximate the distribution $f \in L^2(\Omega,\mathcal F,P)$ in terms of the following chaos expansion
\[
f(\theta,x,v,t) \approx f^M(\theta,x,v,t) = \sum_{h=0}^M \hat f_h(x,v,t)\Phi_h(\theta), 
\]
being $\hat f_h(x,v,t) = \mathbb E\left[ f(\theta,x,v,t)\Phi_h(\theta) \right]$ for all $h=0,\dots,M$. The SG-gPC formulation for the approximated distribution $ f^M = f^M(\theta,x,v,t)$ solution of \eqref{eq:MF_general} is then 
\[
\partial_t f^M +v\cdot \nabla_v f^M = \nabla_v \cdot \left[ \mathcal B[f^M]f^M + \nabla_v \left(D f^M \right) \right]. 
\]
Thanks to the orthogonality of the polynomial basis of $\mathbb P_M$ we obtain a coupled system of $M+1$ purely deterministic PDEs for the evolution of each projection $\hat f_h(x,v,t)$, $h = 0,\dots,M$ that reads
\be\label{eq:gPC_general}
\begin{split}
& \partial_t \hat f_h(x,v,t)  + v\cdot \nabla_x \hat f_h(x,v,t) = \\
& \qquad \nabla_v \cdot \left[ \sum_{k=0}^M S_{hk} \hat f_k(x,v,t) + \sum_{k=0}^M P_{hk}[f^M]\hat f_k(x,v,t) +  \sum_{k=0}^M D_{hk} \nabla_v \hat f_k(x,v,t) \right],
\end{split}\ee
having defined the following matrices for $h,k = 0,\dots,M$
\begin{align}
& S_{hk}(v) = \int_{I_{\theta}} S(\theta,v) \Phi_h(\theta)\Phi_k(\theta)\Psi(\theta)d\theta,  \label{eq:Shk}\\
& P_{hk}[ \hat f] =  \int_{I_{\theta}} \int_{\RR^{d_x\times d_v}}P(\theta,x,x_*)(v-v_*)f^M(\theta,x_*,v_*,t)  \Phi_h(\theta)\Phi_k(\theta)\Psi(\theta) dv_*\, dx_* d\theta \label{eq:Phk} \\
& D_{hk} =  \int_{I_{\theta}} D(\theta)\Phi_h(\theta)\Phi_k(\theta)\Psi(\theta)d\theta. \label{eq:Dhk}
\end{align}
The statistical quantities of interest may be defined in terms of the coefficients of the expansion. In particular we have
\[
\mathbb E[f(\theta,x,v,t)] \approx \hat f_0(x,v,t), \qquad Var[f(\theta,x,v,t)] \approx \sum_{h=0}^M \hat f_h(x,v,t) \mathbb E[\Phi_h^2(\theta)] - \hat f_0^2(x,v,t). 
\]

We introduce the vector $\hat{\textbf{f}} = \left(\hat f_0,\dots,\hat f_M \right)$ and the $(M+1)\times (M+1)$ matrices $\textbf{S} = \{S_{hk}\}_{h,k=0}^M$, $\textbf{P}[\hat{\textbf{f}}]=\{P_{hk}\}_{h,k=0}^M$, $\textbf{D} = \{D_{hk}\}_{h,k=0}^M$, whose components are given by \eqref{eq:Shk}-\eqref{eq:Dhk}, we can reformulate \eqref{eq:gPC_general} in more compact form as follows
\[
\partial_t \hat{\textbf{f}} + v\cdot \nabla_x \hat{\textbf{f}} = \nabla_v \cdot \left[ ( \textbf{S} + \textbf{P}[\hat{\textbf{f}}]) \hat{\textbf{f}} + \textbf{D} \nabla_v  \hat{\textbf{f}} \right]. 
\]

In the following we indicate with $\| \hat{\textbf{f}} \|_{L^2}$ the standard $L^2$ norm of the vector $\hat{\textbf{f}}(x,v,t)$
\[
\| \hat{\textbf{f}} \|_{L^2} := \left[ \int_{\RR^{d_x}\times \RR^{d_v}} \left(\sum_{h=0}^M \hat f_h^2(x,v,t) \right)  dv\,dx \right]^{1/2} . 
\]
We can easily observe that since the norm of $f^M(\theta,x,v,t)$ in $L^2(\Omega)$ is
\[
\begin{split}
\| f^M \|_{L^2(\Omega)} = \int_{I_{\theta}} \int_{\RR^{d_x}\times \RR^{d_v}} \left(\sum_{h = 0}^M \hat f_h \Phi_h(\theta) \right)^2 dv\,dx\,\Psi(\theta)d\theta. 
\end{split}
\]
thanks to the orthonormality of the basis $\{\Phi_k\}_{k=0}^M$ in $L^2(\Omega)$ it follows that  
\[
\| f^M\|_{L^2(\Omega)} = \|\hat{\textbf{f}} \|_{L^2}.
\]

We can prove the following stability result. For notation simplicity we consider the case $d_x = d_v = 1$. 

\begin{thm}
Assume that there exists $C_{\mathcal B},C_D>0$ such that $\|\partial_v( S_ {hk}+P_{hk}) \|_{L^{\infty}}\le C_{\mathcal B}$ and $D_{hk}\le C_D$ for all $h,k=0,\dots,M$, then
\[
\|\hat{\normalfont{\textbf{f}}} \|_{L^2}^2 \le e^{t(C_{\mathcal B}+2C_D)}\| \hat{\normalfont{\textbf{f}}}(0)\|^2_{L^2}. 
\]
\end{thm}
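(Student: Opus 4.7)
The plan is a direct energy estimate: take the $L^2_{x,v}$ inner product of the Galerkin system against $\hat{\textbf{f}}$, integrate by parts in $v$ to redistribute the velocity derivatives, and close a Gronwall inequality for $\|\hat{\textbf{f}}\|_{L^2}^2$. Writing out the time derivative of the norm gives
\[
\tfrac{1}{2}\tfrac{d}{dt}\|\hat{\textbf{f}}\|_{L^2}^2 = -\int v\,\hat{\textbf{f}}\cdot\partial_x\hat{\textbf{f}}\,dv\,dx + \int \hat{\textbf{f}}\cdot\partial_v[(\textbf{S}+\textbf{P}[\hat{\textbf{f}}])\hat{\textbf{f}}]\,dv\,dx + \int \hat{\textbf{f}}\cdot\partial_v[\textbf{D}\,\partial_v\hat{\textbf{f}}]\,dv\,dx.
\]
The transport term vanishes because $v\,\hat{f}_h\partial_x\hat{f}_h = \tfrac{v}{2}\partial_x(\hat f_h^2)$ is an $x$-divergence and $\hat{\textbf{f}}$ decays at infinity.

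For the drift contribution I exploit the symmetry of the matrices $\textbf{S}$ and $\textbf{P}[\hat{\textbf{f}}]$ in $(h,k)$, which follows directly from the symmetric pairing $\Phi_h\Phi_k\Psi$ in the definitions \eqref{eq:Shk}--\eqref{eq:Phk}. Setting $A_{hk}=S_{hk}+P_{hk}$, one integration by parts in $v$ combined with a swap of the summation indices collapses the drift piece to
\[
\sum_{h,k}\int \hat f_h\,\partial_v(A_{hk}\hat f_k)\,dv\,dx \;=\; \tfrac{1}{2}\sum_{h,k}\int (\partial_v A_{hk})\,\hat f_h\,\hat f_k\,dv\,dx,
\]
and the hypothesis $\|\partial_v A_{hk}\|_{L^\infty}\le C_{\mathcal{B}}$ together with $|\hat f_h\hat f_k|\le \tfrac{1}{2}(\hat f_h^2+\hat f_k^2)$ then controls this by $\tfrac{C_{\mathcal{B}}}{2}\|\hat{\textbf{f}}\|_{L^2}^2$.

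For the diffusion I integrate by parts once, producing the quadratic form $-\sum_{h,k}D_{hk}\int \partial_v\hat f_h\,\partial_v\hat f_k\,dv\,dx$. This is in fact non-positive by the Gram structure of $\textbf{D}$, but to match the constant $2C_D$ stated in the bound I would instead use the elementwise estimate $|D_{hk}|\le C_D$ combined with a Young--type inequality, extracting a positive contribution of order $C_D\|\hat{\textbf{f}}\|_{L^2}^2$ (the factor two arising once the two stray velocity derivatives are returned onto the unknowns via Cauchy--Schwarz). Collecting the three contributions yields the differential inequality $\tfrac{d}{dt}\|\hat{\textbf{f}}\|_{L^2}^2 \le (C_{\mathcal{B}}+2C_D)\|\hat{\textbf{f}}\|_{L^2}^2$, after which Gronwall's lemma delivers the claimed exponential bound.

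The hard part is matching exactly the constant $2C_D$: the sharp handling of the diffusion term gives a non-positive contribution, so one has to use a deliberately loose bound rather than the positive semi-definite structure of $\textbf{D}$ to arrive at the stated growth rate. A second, more substantive subtlety is that $P_{hk}[\hat{\textbf{f}}]$ is a nonlinear functional of the unknown, so the uniform bound $\|\partial_v(S_{hk}+P_{hk})\|_{L^\infty}\le C_{\mathcal{B}}$ tacitly requires an a priori mass or decay control on $f^M$ along the evolution; this should either be assumed at $t=0$ and propagated, or built into the statement of the hypothesis.
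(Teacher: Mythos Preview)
Your proof is correct and follows essentially the same energy-estimate route as the paper: multiply by $\hat f_h$, integrate, kill the transport term as an $x$-divergence, exploit the $(h,k)$-symmetry of $\textbf{S}+\textbf{P}$ together with one integration by parts in $v$ to reduce the drift contribution to $\tfrac{1}{2}\sum_{h,k}\int(\partial_v A_{hk})\hat f_h\hat f_k$, bound the diffusion after one integration by parts, and close with Gronwall. Your remark that the Gram structure of $\textbf{D}$ already makes the diffusion contribution non-positive (so the $2C_D$ in the stated growth rate is unnecessarily loose) is in fact sharper than the paper's own handling of that term, and your caveat about the implicit a priori control required for the nonlinear $P_{hk}[\hat{\textbf{f}}]$ is a fair point that the paper simply folds into the hypothesis.
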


\begin{proof}
We multiply \eqref{eq:gPC_general} by $\hat f_h$ and integrate over $\RR^2$ to obtain
\[
\begin{split}
&\int_{\RR^2} \left[ \partial_t \left( \dfrac{1}{2}\hat f_h^2 \right) + v\partial_x \left(\dfrac{1}{2}\hat{f}_h^2 \right) \right]dv\,dx \\
&\qquad\qquad = \int_{\RR^2}\partial_v  \left[\sum_{k=0}^M (S_{hk}+P_{hk})\hat f_k + \partial_v \sum_{k=0}^M D_{hk}\hat f_k \right] \hat f_h dv\,dx.
\end{split}
\]
After integration by parts the transport term vanishes and the right hand term reads
\[
\begin{split}
&\sum_{k=0}^M \int_{\RR^2} \hat f_h \partial_v \left((S_{hk}+P_{hk} )\hat f_k\right)dv\,dx \\
&\qquad= \sum_{k=0}^M \int_{\RR^2}\left( \hat f_k \hat f_h \partial_v (S_{hk}+P_{hk}) +(S_{hk}+P_{hk}) \hat f_h \partial_v \hat f_k \right)dv\,dx \\
& \qquad = - \sum_{k=0}^M \int_{\RR^2} (S_{hk}+P_{hk}) \partial_v \left( \hat f_h \hat f_k \right)dv\,dx - \sum_{k=0}^M \int_{\RR^2} \hat f_k \partial_v \left((S_{hk}+P_{hk}) \hat f_h \right) dv\,dx,
\end{split}
\]
and thanks to the symmetry of $\textbf{S}$ and $\textbf{P}$ we have
\[\begin{split}
2 \sum_{h,k=0}^M \int_{\RR^2} \hat f_h \partial_v \left( (S_{hk}+P_{hk}) \hat f_k \right)dv\,dx & = - \sum_{h,k=0}^M \int_{\RR^2} (S_{hk}+P_{hk}) \partial_v (\hat f_k \hat f_h)dv\,dx \\
& = \sum_{h,k=0}^M \int_{\RR^2} \hat f_h \hat f_k \partial_v (S_{hk}+P_{hk}) dv\,dx.
\end{split}\]
Since $\| \partial_v (S_{hk}+P_{hk})\|_{L^\infty}\le C_{\mathcal B}$ we have
\[
\sum_{h,k=0}^M \int_{\RR^2} \hat f_h \partial_v \left( (S_{hk}+P_{hk}) \hat f_k\right)dv\,dx \le \dfrac{C_{\mathcal B}}{2} \sum_{h,k=0}^M \int_{\RR^2} |\hat f_h \hat f_k| dv\,dx,
\]
and from Cauchy-Schwarz we have
\[
\sum_{h,k=0}^M \int_{\RR^2} \hat f_h \partial_v \left( (S_{hk}+P_{hk}) \hat f_k\right)dv\,dx \le \dfrac{C_{\mathcal B}}{2} \| \hat{\textbf{f}} \|_{L^2}^2
\]
Furthermore, since $D_{hk}\le C_D$ we have
\[
\sum_{h,k=0}^M \int_{\RR^2} \partial_v \left( \partial_v D_{hk}\hat f_k \right)\hat f_hdv\,dx \le - C_D\sum_{h,k=0}^M \int_{\RR^2} |\partial_v \hat f_h \partial_v \hat f_k| dv\,dx,
\]
and we obtain 
\[
\begin{split}
\dfrac{1}{2}\partial_t \| \hat{\textbf{f}} \|^2_{L^2} & \le \dfrac{C_{\mathcal B}}{2}\| \hat{\textbf{f}}\|_{L^2}^2 - \|\partial_v \hat{\textbf{f}} \|_{L^2}^2 \le \left(\dfrac{C_{\mathcal B}}{2}+C_D \right) \| \hat{\textbf{f}} \|_{L^2}^2
\end{split}\]
and thanks to Gronwall's Lemma, we conclude. 
\end{proof}

We highlight how, as for classical spectral methods, the introduced SG decomposition leads to the loss of important physical properties of the solution like the positivity. A positivity preserving scheme is instead obtained if the matrix $\textbf{S} + \textbf{P} = \{S_{hk}+ P_{hk}\}_{h,k=0}^M$ and the matrix $\textbf{D} = \{D_{hk}\}_{h,k=0}^M$ are diagonal. This case corresponds to the situation where no uncertainties are present in self-propelling and diffusion terms and in case of interaction with a background distribution that does not encode any uncertainty. Under the introduced assumptions and with uncertain initial distribution of agents $f(\theta,x,v,0)= f_0(\theta,x,v)$ the SG-gPC decomposition of the original problem reads for all $h=0,\dots,M$
\[
\partial_t \hat f_h(x,v,t) + v\cdot \nabla_x \hat f_h(x,v,t) = \nabla_v \cdot \left[ (S_{hh}+P_{hh}) \hat f_h(x,v,t) + D_{hh} \nabla_v \hat f_h(x,v,t) \right]. 
\]
Therefore, we have to solve a set of $M+1$ decoupled VFP equations. In this case we can apply structure preserving type schemes to ensure the positivity of the statistical moments, accurate description of the large time behavior of each projection and entropy dissipation, see  \cite{BCH,CCH,PZ1,PZ2,Z}. 

To tackle efficiently the general fully nonlinear case in the following we will introduce a novel scheme that exploits the spectral convergence in the random space of SG-gCP methods and that naturally preserves the positivity of the numerical distribution. 

\subsection{Particle based SG-gPC formulation of kinetic equations}

As for classical spectral methods, the solution of the coupled SG system \eqref{eq:gPC_general} for $f^M$ looses its positivity and then a clear physical meaning. In order to overcome the difficulty recently the so-called Monte Carlo gPC (MCgPC) scheme has been proposed. These methods combine the advantages of a Monte Carlo approach for the approximation of $f$ in the phase space and they conserve spectral accuracy in the random space. We refer to \cite{CPZ} for the study a wide range of mean-field equations describing the emergence of patterns and ordered behavior in large interacting systems in the zero diffusion limit. 

The Monte Carlo (MC) method is a probabilistic particle method that describe the evolution of density functions by resorting to the computation of interactions in a finite set of randomly chosen particles of which it is known a priori the dynamics, see \cite{DP15,PT}. In MC methods the updated distribution function in the phase space is typically reconstructed from particles as post-processing. Several approaches are possible for the reconstruction step, which can estimated through a histogram, weighted integration rule methods or by a convolution of the empirical particle distribution with a suitable mollifier, see \cite{HE}. All the mentioned approximations preserve the positivity of the obtained numerical distribution function. Concerning the order of accuracy for MC methods, we have a convergence rate of the order $\mathcal O(\frac{1}{\sqrt{N}})$ where $N$ is the number of considered samples, see \cite{Caflisch}.

Therefore, in order to set-up a MC scheme for the general equation \eqref{eq:MF_general} we need to find a system of stochastic differential equations which converge in distribution to the correct solution of the problem. This problem has been tackled in a more general setting in \cite{BCC} and equation \eqref{eq:MF_general} can be derived from the following second order system of SDEs for $(x_i(\theta,t),v_i(\theta,t))\in \RR^{d_x}\times \RR^{d_v}$, $i=1,\dots,N$, with random inputs
\be\label{eq:systemSDE}
\begin{cases}\vspace{1em}
d x_i(\theta,t) = v_i(\theta,t) dt, \qquad \\
d v_i(\theta,t) = \Big(\alpha(\theta)(1-|v_i|^2)v_i + \dfrac{1}{N} \displaystyle\sum_{j=1}^N P(\theta,x_i,x_j) (v_j-v_i) \Big)dt + \sqrt{2D(\theta)}dW_i
\end{cases}
\ee
with initial positions and velocities $(x_i(\theta,0),v_i(\theta,0))$. In \eqref{eq:systemSDE} we denoted by $\{W_i\}_{i=1}^N$ a vector of $N$ independent Wiener processes and independent with the random varlable $\theta$. Notice that the random variable $\theta$ plays the role of a parameter, thus for each sampling from the distribution $\Psi(\theta)$, we can apply the mean-field results developed in \cite{BCC} to conclude that \eqref{eq:kinetic_CSM} is the mean field limit of \eqref{eq:systemSDE}.

It is worth to remark that in the case of vanishing self-propulsion, i.e. $\alpha(\theta)\equiv 0$, and for symmetric interactions, i.e. $P(\theta,x_i,x_j) = P(\theta,x_j,x_i)$, the above system of SDEs preserves the initial mean velocity of the system in the limit $N\rightarrow +\infty$. Indeed we have
 \[
 \begin{split}
 d \sum_{i =1}^N v_i(\theta,t) & = \dfrac{1}{N} \sum_{i,j=1}^N P(\theta,x_i,x_j)(v_j-v_i) + \sqrt{2D(\theta)} \sum_{i=1}^N dW_i 
 = \sqrt{2D(\theta)} d\sum_{i=1}^N W_i,
 \end{split}\]
to conclude due to a law of large numbers argument. In general, the mean velocity is not conserved due to the presence of  the self-propulsion forces and appropriate methods must be developed to catch the correct transient regimes. 

In the following we summarize the assumptions to ensure the mean-field convergence. Let us introduce the empirical measure associated to the particle system \eqref{eq:systemSDE} weighted by the distribution of the uncertainty $\Psi(\theta)$, i.e.
\[
f^{(N)}(\theta,x,v,t) = \left[\dfrac{1}{N} \sum_{i=1}^N \delta(x-x_i)\otimes \delta(v-v_i) \right]\Psi(\theta). 
\]
The transition to chaos for the particle system \eqref{eq:systemSDE} follows from a small variation of \cite[Theorem 1.1]{BCC} under non restrictive assumptions on the a kernel $K(x_i-x_j,v_i-v_j) = -P(\theta,|x_i-x_j|)(v_i-v_j)$ and the drift $S(\theta,v_i)$. The proof is based on the following argument: the $N$ interacting process $(x_i(\theta,t),v_i(\theta,t))_{i=1}^N$ behaves in the limit $N\rightarrow +\infty$ like the process defined by the set $(\bar{x}_i(\theta,t),\bar{v}_i(\theta,t))_{i=1}^N$ solution of the McKean-Vlasov equations
\be\label{eq:systemSDE_2}
\begin{cases}
&d \bar{x}_i(\theta,t)= \bar{v}_i(\theta,t) dt, \qquad \\
&d \bar{v}_i(\theta,t) = -S(\theta,\bar{v}_i)dt - K*f(\theta,\bar{x}_i,\bar{v}_i)dt + \sqrt{2D(\theta)}dW_i, \\
&(\bar{x}_i(\theta,0),\bar{v}_i(\theta,0)) = ({x}_i(\theta,0),{v}_i(\theta,0)),
\end{cases}
\ee
where $f$ is the probability law of $(\theta,\bar{x}_i(\theta,t),\bar{v}_i(\theta,t))$and $\{W_i\}_{i=1}^N$ are the Wiener processes characterizing \eqref{eq:systemSDE}. The processes are identically distributed and independent from the random variable $\theta$, and by the It{\^o} formula their law $f$ evolves according a VFP equation of type \eqref{eq:MF_general}. We summarize this in the following result representing a direct extension of Theorem 1.1 in  \cite{BCC} that holds for all $\theta \in \IT$. 
 
\begin{thm}[{\cite[Theorem 1.1]{BCC}}]\label{thm:meanfield}
Let $f_0(\theta,x,v)$ be a Borel probability measure and let $(x_i(\theta,0),v_i(\theta,0))$ for all $i=1,\dots,N$ be  $N$ independent variables for all $\theta\in\IT$. We assume that the drift and the antisymmetric kernel satisfy that there exist constants $A,L,p>0$ such that
\[\begin{split}
-(v-w)\cdot \left(S(v)-S(w) \right)&\le A|v-w|^2, \\
|K(x,v) - K(y,v)| &\le L \min\{|x-y|,1\} \,(1+|v|^p)
\end{split}\]
for all $x,y\in\RR^{d_x}$, $v,w\in\mathbb R^{d_v}$. If the particle system \eqref{eq:systemSDE} and the processes \eqref{eq:systemSDE_2} have global solutions on the finite time interval $[0,T]$ with initial data $({x}_i(\theta,0),{v}_i(\theta,0))$ such that
\[
\sup_{0\le t\le T} \sup_{\theta\in I_{\theta}}\left\{ \int_{\RR^{2d_x}\times \RR^{2d_v}}\!\!\!\!\!\!\!\!\!\!\!\!\!\!\!\!\!\!\!\!\!|K(x-y,v-w)|^2 df(\theta,x,v)df(\theta,y,w) + \int_{\RR^{d_x}\times \RR^{d_v}}\!\!\!\!\!\!\!\!\!\!\!\!\!\!\!\!\!\! (|x|^2+e^{a|v|^p})df(\theta,x,v) \right\}<\infty\,,
\]
with $f(\theta,\cdot,\cdot) = \textrm{law}(\bar x_i,\bar v_i)$ for $\theta \in I_{\theta}$, then there exists a constant $C >0$ such that
\be\label{eq:thm1}
\mathbb E_{\theta,W}\left[ |x_i-\bar x_i|^2 + |v_i-\bar v_i|^2  \right] \le \dfrac{C}{N^{e^{-Ct}}}.
\ee
If there exists $p^\prime>p$ such that
\be\label{eq:thm2}
\sup_{0\le t\le T} \sup_{\theta\in I_{\theta}} \int_{\RR^{d_x}\times \RR^{d_v}} e^{a|v|^{p^\prime}}df(\theta,x,v)<\infty
\ee
then for all $0<\epsilon<1$ there exists a constant $C>0$ such that 
\[
\mathbb E_{\theta,W}\left[ |x_i-\bar x_i|^2 + |v_i-\bar v_i|^2  \right] \le \dfrac{C}{N^{1-\epsilon}},
\]
for all $0\le t\le T$ and $N\ge 1$. 
\end{thm}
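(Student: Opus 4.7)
The plan is to adapt the coupling argument of Bolley--Ca\~nizo--Carrillo \cite{BCC} in a $\theta$-wise manner: since $\theta$ enters the SDEs \eqref{eq:systemSDE} and \eqref{eq:systemSDE_2} only as a parameter and is independent of the Brownian motions $\{W_i\}$, I would fix $\theta\in I_\theta$, couple the two systems by feeding them the same Wiener processes and the same initial data $(x_i(\theta,0),v_i(\theta,0))$, and derive an $L^2$ estimate whose constants depend on $\theta$ only through the quantities appearing in the hypotheses. The uniformity assumptions $\sup_{\theta\in I_\theta}$ in the statement then allow me to take expectations in both $\theta$ and $W$ via Fubini, yielding \eqref{eq:thm1}.

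First I would apply It\^o's formula to $|x_i(\theta,t)-\bar{x}_i(\theta,t)|^2+|v_i(\theta,t)-\bar{v}_i(\theta,t)|^2$. The Brownian differentials cancel thanks to the shared noise, the position equation contributes a harmless term controlled by the velocity difference via Young's inequality, and the drift $S$ is absorbed through the one-sided bound $-(v-w)\cdot(S(v)-S(w))\le A|v-w|^2$. The crucial step is to control the interaction discrepancy
\[
\frac{1}{N}\sum_{j=1}^N K(x_i-x_j,v_i-v_j)\,-\,(K*f)(\theta,\bar{x}_i,\bar{v}_i),
\]
which I would split into a \emph{coupling contribution} $\frac{1}{N}\sum_j\bigl[K(x_i-x_j,v_i-v_j)-K(\bar{x}_i-\bar{x}_j,\bar{v}_i-\bar{v}_j)\bigr]$, handled by the locally Lipschitz bound on $K$ with the polynomial factor $(1+|v|^p)$, and an \emph{empirical-measure error} $\frac{1}{N}\sum_j K(\bar{x}_i-\bar{x}_j,\bar{v}_i-\bar{v}_j)-(K*f)(\theta,\bar{x}_i,\bar{v}_i)$, which is a sum of centered i.i.d.\ random variables conditionally on $\theta$ and on $(\bar{x}_i,\bar{v}_i)$, hence of variance $\mathcal O(1/N)$ thanks to the second-moment hypothesis on $K$.

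Combining these estimates with Cauchy--Schwarz, and invoking the exponential moment assumption on $e^{a|v|^p}$ to absorb the polynomial weight, I would arrive at a nonlinear Gronwall inequality whose right-hand side depends sub-linearly on $\mathbb E_W[|x_i-\bar{x}_i|^2+|v_i-\bar{v}_i|^2]$, because H\"older's inequality is forced on us to trade velocity moments against the growth $(1+|v|^p)$ of the kernel. Iterating this Gronwall argument produces the characteristic rate $C/N^{e^{-Ct}}$ of \cite{BCC}. Under the strengthened hypothesis \eqref{eq:thm2} with $p^\prime>p$, a sharper H\"older estimate removes the degradation in the exponent and delivers the improved bound $C/N^{1-\epsilon}$.

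The main obstacle, beyond the intrinsically delicate treatment of a kernel with polynomial growth in $v$, is to ensure that every constant in the argument is uniform in $\theta$. I would handle this by observing that the hypotheses of the theorem already incorporate suprema over $\theta\in I_\theta$ of the relevant moments and of the interaction integral, so the constants $A$, $L$, $p$, and the bounds on $\int|K|^2\,df\otimes df$ and $\int e^{a|v|^p}\,df$ may be replaced by their $\theta$-suprema without loss. The closing step $\mathbb E_{\theta,W}[\,\cdot\,]=\mathbb E_\theta\bigl[\mathbb E_W[\,\cdot\,\mid\theta]\bigr]$ then reduces the problem to applying \cite[Theorem 1.1]{BCC} $\theta$-by-$\theta$ and taking the essential supremum before integrating against $\Psi(\theta)\,d\theta$.
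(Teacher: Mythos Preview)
Your proposal is correct and aligns with the paper's treatment. The paper does not actually prove this theorem: it is stated as a direct extension of \cite[Theorem~1.1]{BCC}, with the sole remark that the bounds hold for each $\theta\in I_\theta$ and that one then takes expectation in $\theta$ as well. Your outline goes further, sketching the actual coupling argument of \cite{BCC} (It\^o on the squared difference, cancellation of the shared noise, one-sided Lipschitz for $S$, splitting the interaction term into a coupling part and an empirical-measure fluctuation, and the nonlinear Gronwall leading to the $N^{e^{-Ct}}$ rate) and making explicit the observation that $\theta$ enters only parametrically and independently of the Wiener processes, so that the $\sup_{\theta}$ hypotheses guarantee $\theta$-uniform constants and Fubini closes the argument. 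This is exactly the mechanism the paper alludes to but does not write out.
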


We have denoted with $\mathbb E_{\theta,W}[\cdot]$ the expectation taken with respect to the distribution of the Wiener processes and the random variable $\theta$. We highlight how the provided bounds hold for all $\theta\in \IT$ and in particular we take expectation with respect to the introduced stochastic parameter.
It is easily verified that for the introduced self-propulsion force and the kernels of interest the assumptions are met. Therefore the mentioned theorem gives the following quantitative result on the convergence of the empirical measure $f^{(N)}$ to the distribution $f$: let $\varphi$ a Lipschitz map on $\RR^{d_x}\times \RR^{d_v}$ then 
\begin{align}
& \mathbb E_{\theta,W}\left[\left| \dfrac{1}{N} \sum_{i=1}^N \varphi(x_i,v_i) - \int_{\RR^{d_x}\times \RR^{d_v}} \varphi(x,v) df(\theta,x,v) \right|^2 \right]  \nonumber\\
&\qquad \le 2 \mathbb E_{\theta,W}\left[| \varphi(x_i,v_i)- \varphi(\bar{x}_i,\bar{v}_i)|^2 + \left| \dfrac{1}{N} \sum_{i=1}^N \varphi(\bar x_i,\bar v_i) - \int_{\RR^{d_x}\times \RR^{d_v}} \varphi(x,v)df(\theta,x,v) \right|^2 \right] \nonumber \\
&\qquad \le \varepsilon(N) + C/N, \label{kkk}
\end{align}
where $\varepsilon(N) \rightarrow 0$ for $N\rightarrow + \infty$ with a rate $\varepsilon(N)$ defined in \eqref{eq:thm1}-\eqref{eq:thm2} depending on the assumptions on the solutions in Theorem \ref{thm:meanfield} and the law of large numbers to deal with the last term. These arguments are standard in interacting particle systems and analogous to \cite{BCC} and the references therein.

In the next section we derive a SG-gPC decomposition of \eqref{eq:systemSDE} so that we will preserve the exponential convergence in the random space with respect to all the uncertain quantities. 

\subsubsection{Stochastic Galerkin scheme for the particle system}\label{sect:SG_particle}

In the following we consider the gPC decomposition of the microscopic dynamics \eqref{eq:systemSDE}. Space and velocity variables of the $i$th agent, for all $i=1,\dots,N$, are approximated by $(x_i^M,v_i^M)$ where
\[
 x_i^M(\theta,t) = \sum_{k=0}^M \hat x_{i,k} \Phi_k(\theta),\qquad
 v_i^M(\theta,t) = \sum_{k=0}^M \hat v_{i,k} \Phi_k(\theta) ,
\]
being as before $\{\Phi_k(\theta)\}_{k=0}^M$ an orthonormal basis of $L^2(\Omega)$. 

Next we give explicit representation of the SG-gPC expansion at the particle level. To obtain the SG-gPC decomposition for the particle system we rewrite \eqref{eq:systemSDE} in terms of $x_i^M$, $v_i^M$ for all $i=1,\dots,N$, whose projection in the linear spaces of degree $h=0,\dots,M$ reads

\begin{equation*}
\begin{cases} \vspace{1em}
& \displaystyle d \int_{I_{\theta}}x_i^M \Phi_h(\theta)\Psi(\theta)d\theta = \int_{I_{\theta}} v_i^M \Phi_h(\theta)\Psi(\theta)d\theta dt \\
&\displaystyle d\int_{I_{\theta}}v_i^M \Phi_h(\theta)\Psi(\theta)d\theta = \\
&\qquad \displaystyle \int_{I_{\theta}} \left( \alpha(\theta)(1-|v_i^M|^2)v_i^M +\dfrac{1}{N} \sum_{j=1}^N P(\theta,x_i^M,x_j^M)(v_j^M-v_i^M) \right)\Phi_h(\theta)\Psi(\theta)d\theta dt  \\
&\qquad \displaystyle+ \int_{I_{\theta}}\sqrt{2D(\theta)}\Phi_h(\theta)\Psi(\theta)d\theta dW_i,
\end{cases}
\end{equation*}
and again thanks to the orthogonality of the polynomial basis we obtain for all $i=1,\dots,N$ the following coupled system of SDEs describing the evolution of each component of the original variables in the phase space
 \begin{equation}
 \begin{cases}\vspace{1em}
 &d \hat{x}_{i,h} = \hat{v}_{i,h} dt \\
 &d \hat{v}_{i,h} = \left(\displaystyle\sum_{k=0}^M s_{hk}(v_i^M) \hat{v}_{i,k} + \dfrac{1}{N} \displaystyle\sum_{j=1}^N \sum_{k=0}^M p_{hk}^{ij} (\hat{v}_{j,k}- \hat{v}_{i,k}) \right)dt + d_{h} dW_i,
 \end{cases}
 \label{eq:system_SDEh}
 \end{equation}
 where 
 \[\begin{split}
 s_{hk}(v_i^M) &= \int_{I_{\theta}} \alpha(\theta)(1-|v_i^M|^2)\Phi_k(\theta)\Phi_h(\theta)\Psi(\theta)d\theta \\
 p_{hk}^{ij}  &= \int_{I_{\theta}} P(\theta,x_i^M,x_j^M) \Phi_k(\theta)\Phi_h(\theta)\Psi(\theta)d\theta,
 \end{split}\]
 and 
 \[
 d_h = \int_{I_\theta} \sqrt{2D(\theta)}\Phi_h(\theta)\Psi(\theta)d\theta.
 \]
 
 It is worth to remark that in the case of vanishing self-propulsion $s_{hk}(v_i^M) = 0$ for all $h,k=0,\dots,M$ and for symmetric interactions $p_{hk}^{ij} = p_{hk}^{ji}$ we recover the conservation of the mean velocity of the system also in the SG decomposition.
 
The convergence of the SG-gPC expansion \eqref{eq:gPC_general} for sufficiently regular function follows from standard results in polynomial approximation theory, we recall for example \cite{Fun}. In general, thanks to the property of the introduced polynomial basis we have
\begin{thm}\label{thm:proj}
We consider $\theta\in I_\theta$ with distribution $\Psi(\theta)$ and the basis $\{\Phi_h\}_{h=0}^M$ of orthonormal polynomial basis in $L^2(\Omega)$ . Then, for any $g(\theta,t) \in L^2(\Omega)$
\[
\| g - g^M\|_{L^2(\Omega)} \rightarrow 0 \qquad \textrm{as}\quad M\rightarrow \infty,
\]
where $g^M = \sum_{k=0}^M \left( \int_{I_\theta} g(\theta,t) \Phi_k(\theta)d\Psi(\theta)\right)\Phi_k(\theta)$.
\end{thm}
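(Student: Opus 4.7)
The statement is a standard Hilbert–space / orthogonal polynomial completeness result, so the plan is to reduce it to (i) the variational (best–approximation) characterisation of the truncated expansion and (ii) the density of polynomials in $L^2(\Omega)$ with weight $\Psi(\theta)$.

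First I would observe that, because $\{\Phi_h\}_{h=0}^{M}$ is orthonormal in $L^2(\Omega)$, the truncation
\[
g^M(\theta,t) = \sum_{k=0}^{M} \hat g_k(t)\,\Phi_k(\theta),\qquad \hat g_k(t) = \int_{I_\theta} g(\theta,t)\Phi_k(\theta)\Psi(\theta)\,d\theta,
\]
is nothing but the $L^2(\Omega)$–orthogonal projection of $g(\cdot,t)$ onto the finite–dimensional subspace $\mathbb{P}_M$. In particular $g - g^M \perp \mathbb{P}_M$, and the Pythagorean identity gives the best–approximation property
\[
\|g - g^M\|_{L^2(\Omega)} \;=\; \inf_{q\in \mathbb{P}_M}\|g-q\|_{L^2(\Omega)}.
\]
Bessel's inequality then yields $\sum_{k=0}^{M}|\hat g_k(t)|^2\le \|g\|_{L^2(\Omega)}^2$, so the sequence $\|g^M\|_{L^2(\Omega)}$ is monotone and bounded, and $\|g-g^M\|_{L^2(\Omega)}^2 = \|g\|_{L^2(\Omega)}^2 - \sum_{k=0}^{M}|\hat g_k(t)|^2$ is monotonically decreasing in $M$.

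The main (and really the only non–trivial) step is to prove that this monotone decreasing quantity actually tends to zero, i.e.\ that the orthonormal polynomial family is \emph{complete} in $L^2(I_\theta,\Psi\,d\theta)$. By the best–approximation identity, it suffices to exhibit, for every $\varepsilon>0$, some polynomial $q\in \mathbb{P}_M$ (for $M$ large) with $\|g-q\|_{L^2(\Omega)}<\varepsilon$. I would handle this in two moves. If $I_\theta$ is compact I invoke the Weierstrass approximation theorem: given $\varepsilon>0$, approximate $g(\cdot,t)$ in $L^2(\Omega)$ first by a continuous function (density of $C(I_\theta)$ in $L^2$), then uniformly by a polynomial, and absorb $\|\Psi\|_{L^1}<\infty$ into the estimate. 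If $I_\theta$ is unbounded, completeness of the polynomials in $L^2(\Psi\,d\theta)$ holds under the standard moment conditions satisfied by the usual gPC families (e.g.\ Hermite for Gaussian, Laguerre for Gamma, Jacobi for Beta), which is precisely the setting in which $\{\Phi_h\}$ is constructed; this is the classical determinacy criterion for the Hamburger/Stieltjes moment problem, and it is what the hypothesis \emph{``orthonormal polynomial basis in $L^2(\Omega)$''} tacitly encodes.

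Having density of polynomials in hand, given $\varepsilon>0$ I pick $M_0$ and $q\in \mathbb{P}_{M_0}$ with $\|g-q\|_{L^2(\Omega)}<\varepsilon$. For every $M\ge M_0$, by the best–approximation property,
\[
\|g-g^M\|_{L^2(\Omega)}\;\le\;\|g-q\|_{L^2(\Omega)}\;<\;\varepsilon,
\]
which is exactly the claim. The hard part is really the density statement itself; after that the rest is bookkeeping via orthogonality and Bessel's inequality. Since the paper cites \cite{Fun} for this classical result, I would simply state the completeness of $\{\Phi_h\}_{h\ge 0}$ as a consequence of the construction of the gPC basis and quote Weierstrass/Hamburger as needed, rather than reproving it in detail.
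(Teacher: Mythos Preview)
Your proposal is correct and in fact goes well beyond what the paper does: the paper provides \emph{no proof} for this theorem. It is stated as a standard result in polynomial approximation theory, with a citation to Funaro's monograph \cite{Fun} given in the sentence immediately preceding the statement. So there is nothing to compare against---your argument (orthogonal projection, best approximation, density of polynomials via Weierstrass or moment-problem determinacy) is exactly the classical route one would find in the cited reference, and is appropriate here.
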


In the present setting, thanks to Theorem \ref{thm:proj} it is reasonable to expect the convergence of the empirical measure
\[
f^{(N,M)}(\theta,x,v,t) = \left[\dfrac{1}{N} \sum_{i=1}^N \delta(x-x_i^M) \otimes \delta(v-v_i^M)\right] \Psi(\theta)
\]
to the empirical measure $f^{(N)}$ as $M\rightarrow \infty$ for all $t\ge0$ and for $(x_i^M,v_i^M)$ solution to \eqref{eq:system_SDEh}. Hence, from the mean-field convergence showed in Theorem \ref{thm:meanfield} we would guarantee that $f^{(N,M)}$ converges to $f(\theta,x,v,t)$ solution of the initial VFP problem by taking first the limit of the Galerkin modes $M\to\infty$ and then the limit of large number of particles $N\to\infty$. We can prove the following result following the footprint of \eqref{kkk} and Theorem \ref{thm:proj} applied to $x_i(\theta,t)$ and $v_i(\theta,t)$.

\begin{thm}
Let us define the following empirical measure
\[
f^{(N,M)}(\theta,x,v,t) = \left[\dfrac{1}{N} \sum_{i=1}^N \delta(x-x_i^M) \otimes \delta(v-v_i^M)\right] \Psi(\theta).
\]
Hence, provided that $x_i^M(\theta,t) \in L^2(\Omega)$, $v_i^M(\theta,t) \in L^2(\Omega)$ we have
\[
f^{(N,M)}(\theta,x,v,t) \rightarrow f^{(N)}(\theta,x,v,t),
\]
in $P(P(\Omega\times\RR^{d_x}\times \RR^{d_v}))$.
\end{thm}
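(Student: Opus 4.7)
The plan is to establish weak convergence of probability measures on $I_\theta \times \RR^{d_x} \times \RR^{d_v}$ by pairing both empirical measures against an arbitrary bounded Lipschitz test function $\varphi(\theta,x,v)$. By the Portmanteau characterization, verifying that $\langle \varphi, f^{(N,M)}\rangle \to \langle \varphi, f^{(N)}\rangle$ for every such $\varphi$ yields the claimed convergence in the weak topology of probability measures, which is the natural reading of the target space stated in the theorem.

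First, since both measures share the common $\theta$-marginal $\Psi(\theta)$ and differ only through their atoms in the phase space, I would compute directly
\[
\langle \varphi, f^{(N,M)} - f^{(N)}\rangle = \frac{1}{N}\sum_{i=1}^N \int_{I_\theta}\!\!\left[\varphi(\theta,x_i^M(\theta,t),v_i^M(\theta,t)) - \varphi(\theta,x_i(\theta,t),v_i(\theta,t))\right]\Psi(\theta)\,d\theta.
\]
Applying the Lipschitz bound for $\varphi$ in the phase variables with constant $L_\varphi$, and then the Cauchy--Schwarz inequality in $L^2(\Omega)$ with weight $\Psi$, yields
\[
|\langle \varphi, f^{(N,M)} - f^{(N)}\rangle| \le \frac{L_\varphi}{N}\sum_{i=1}^N \left( \|x_i^M - x_i\|_{L^2(\Omega)} + \|v_i^M - v_i\|_{L^2(\Omega)} \right).
\]

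Second, the hypothesis $x_i(\theta,t),v_i(\theta,t)\in L^2(\Omega)$ together with Theorem \ref{thm:proj}, applied componentwise to the trajectories $x_i(\cdot,t)$ and $v_i(\cdot,t)$ expanded in the orthonormal basis $\{\Phi_k\}_{k=0}^M$, gives $\|x_i^M - x_i\|_{L^2(\Omega)}\to 0$ and $\|v_i^M - v_i\|_{L^2(\Omega)}\to 0$ as $M\to\infty$. Since $N$ is fixed and the sum involves a finite number of terms, the right-hand side vanishes in the limit, so $\langle \varphi, f^{(N,M)}\rangle \to \langle \varphi, f^{(N)}\rangle$ for every bounded Lipschitz $\varphi$, whence the weak convergence of empirical measures follows.

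The main obstacle I anticipate is the bookkeeping around the topology of convergence on probability measures: one must verify that the object $[\frac{1}{N}\sum_i \delta(x - x_i^M(\theta,t))\otimes \delta(v - v_i^M(\theta,t))]\Psi(\theta)$ is indeed a probability measure on the joint space $I_\theta \times \RR^{d_x}\times \RR^{d_v}$ for each $M$, and that the Lipschitz constant of the test functions can be taken uniformly in $\theta$. Once this is settled, the remainder is an immediate consequence of the spectral $L^2(\Omega)$-convergence of the gPC expansion recorded in Theorem \ref{thm:proj}, so no further quantitative estimate is required for this qualitative statement.
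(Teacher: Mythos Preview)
Your proposal is correct and follows essentially the same approach as the paper: both pair the empirical measures against test functions, reduce to a finite sum of particle-wise differences, and then invoke the $L^2(\Omega)$-convergence of the gPC projections (Theorem~\ref{thm:proj}) to conclude. The only cosmetic difference is that the paper restricts to product test functions $\varphi_1(\theta)\varphi_2(x,v)$ and leaves the Lipschitz/Cauchy--Schwarz step implicit, whereas you spell these out explicitly; your version is in fact the more complete of the two.
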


\begin{proof}
To prove the convergence in $P(P(\Omega\times\RR^{d_x}\times \RR^{d_v}))$ we consider a sufficiently regular test function in all variables $(\theta,x,v)$, that we can assume of the form $\varphi_1(\theta)\varphi_2(x,v)$ without loss of generality, and we compute
\begin{equation*}\begin{split}
\int_{I_\theta}&\left| \dfrac{1}{N} \sum_{i=1}^N \varphi_2(x_i,v_i) -  \dfrac{1}{N} \sum_{i=1}^N \varphi_2(x_i^M,v_i^M) \right | \Psi(\theta) |\varphi_1(\theta)| d\theta  \le \\ 
& \qquad\qquad\qquad\qquad\qquad \dfrac{1}{N }\sum_{i=1}^N \int_{I_\theta}\left|\varphi(x_i,v_i)-\varphi(x_i^M,v_i^M)\right| \Psi(\theta) |\varphi_1(\theta)|d\theta
\end{split}\end{equation*}
providing the convergence result for $M\rightarrow +\infty$.
\end{proof}
In the last section we will give numerical evidence of this result.

\subsubsection{Monte Carlo gPC scheme}

We now approximate the limiting stochastic kinetic equation taking advantage of the particle reformulation of the problem. In fact, since the solution of the system of SDEs \eqref{eq:systemSDE} converges in distribution to the solution of the original problem  \eqref{eq:MF_general} for $N\rightarrow +\infty$, we can approximate the original dynamics by means of a Monte Carlo (MC) method in the phase space. The main drawback of this approach lies in the computational cost $\mathcal O(M^2N^2)$, since at each time step and for each gPC projection each agent modifies its velocity in a genuine nonlinear way. 

A significant reduction in terms of computational cost can be achieved through a mean field MC evaluation of the interaction dynamics as originally proposed in \cite{AlbiPareschi13}, see \cite{CPZ} for the UQ framework. Thanks to this approach we have an efficient algorithm for transport and interaction in the phase space and we can reconstruct the expected solution from the particle system from positions and velocities at the microscopic level, which is considered in the SG-gPC setting as in Section \ref{sect:SG_particle}. This approach has been recently analysed in connection to other problems in \cite{JLL}.
 \newpage
\begin{alg}[\textbf{MCgPC for nonlocal nonlinear VFP equations}]
\qquad\qquad\qquad\qquad
\begin{itemize}
\item[1.] Consider $N$ samples $(x_{i},v_{i})$ with $i=1,\dots,N$ from the initial $f_0(x,v)$, and fix $S\le N$ a positive integer;
\item[2.] Perform gPC expansion up to order $M\ge 0$ over the set of $S\le N$ particles to obtain the  projections $(\hat x_{i,h},\hat v_{i,h})$, $h =0,\dots,M$.
\item[] \textit{for $n=0$ to $T-1$}
\item[3.] Generate $N$ Brownian paths $\{\eta_i\}_{i=1}^N = \{W_i^{n+1}-W_i^n\}_{i=1}^N \sim \mathcal N(0,1)$, $\{\eta_i(0)\}_{i=1}^N = 0$
\begin{itemize}
\item [] \hspace{-0.7cm}\textit{for $i = 1$ to $N$}
\item [a)] sample $S$ particles $j_1,\dots,j_S$ uniformly without repetition among all particles;
\item[b)] compute the position and velocity change 
\[\begin{split}
\hat x_{i,h}^{n+1} &= \hat x_{i,h}^{n} + \hat v_{i,h}^{n} \Delta t \\
\hat v_{i,h}^{n+1} &= \hat v_{i,h} +\Delta t\sum_{k=0}^M s_{hk}^i v_{i,k}^n +  \dfrac{\Delta t}{S}\sum_{s=1}^S\sum_{k=0}^M p_{kh}^{i{j_s}}(\hat v_{j_s,k}-\hat v_{i,k}) + \sqrt{2d_h \Delta t}\eta_i
\end{split}\]
\item[]  \hspace{-0.7cm}\textit{end for}
\end{itemize}
\item[5.] Reconstruction $\mathbb E[f(\theta,x,v,n\Delta t)] = \int_{I_\theta} f(\theta,x,v,n\Delta t)\Psi(\theta)d\theta$ .
\item[]\textit{end for}
\end{itemize}
\label{alg:1}
\end{alg}

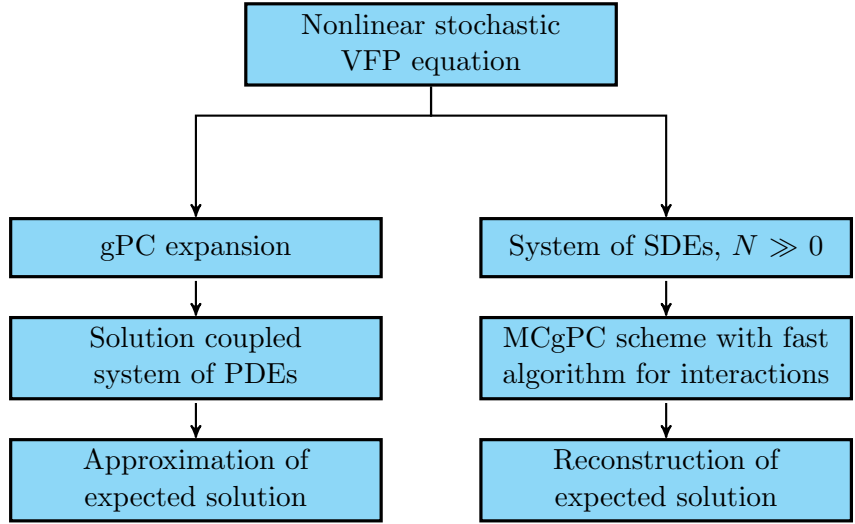
\begin{figure}[t]
\begin{center}
\begin{tikzpicture}
[node distance=0.5cm, start chain=going below,]
\node [punktchain] (Kinetic)  {\textnormal{Nonlinear stochastic \\ VFP equation}};
\node[below=2cm of Kinetic](dummy){};
\begin{scope}[start branch=venstre,every join/.style={->, thick, shorten <=1pt}, ]
\node [punktchain,left=of dummy] (SG-gPC)  {\textnormal{gPC expansion}};
\node [punktchain,join,on chain=going below] (sol SG)  {\textnormal{Solution coupled system of PDEs}};
\node [punktchain,join, on chain=going below] (app)  {\textnormal{Approximation of expected solution}};
\end{scope}
\begin{scope}[start branch=hoejre,every join/.style={->, thick, shorten <=1pt}, ]
\node[punktchain,right=of dummy]  (SG-gPC-SDE)  {\textnormal{System of SDEs, $N\gg 0$}};
\node [punktchain,join, on chain=going below] (MCgPC)  {\textnormal{MCgPC scheme with fast algorithm for interactions}};
\node [punktchain,join, on chain=going below] (app2)  {\textnormal{Reconstruction of expected solution}};
\end{scope}
\draw[|-,-|,->, thick,](Kinetic.south) |-+(0,-1em)-| (SG-gPC.north);
\draw[|-,-|,->, thick,] (Kinetic.south) |-+(0,-1em)-| (SG-gPC-SDE.north);
\end{tikzpicture}
\caption{Possible numerical approaches to nonlinear stochastic VFP equations, the right branch describes the MCgPC scheme.}
\label{fig:scheme}
\end{center}
\end{figure}

We sketch in Figure \ref{fig:scheme} the two approaches for the approximation of statistical quantities of the nonlinear nonlocal stochastic VFP problems of interest. In the left branch we find the standard SG approach where first we consider the gPC approximation of the original problem, which generates a coupled system of PDEs that can be solved through deterministic methods to obtain the evolution of expected quantities. On the right branch we describe the introduced MCgPC procedure, based on a particle reformulation of the problem which converges in distribution in the phase space to the solution of the problem. The advantage of considering a gPC scheme for the microscopic system lies in the preservation of the typical spectral convergence in the random space of the method. We highlight how thanks to the adoption of the computational strategy in Algorithm \ref{alg:1} the overall cost becomes $\mathcal O(M^2SN)$, $S\ll N$. 

For the reconstruction of expected quantities, in the present manuscript we consider the histogram of position and velocity of the set of particles in the phase space, we point the reader to \cite{HE} for possible alternatives. Thanks to the MC approach the resulting method preserves the positivity of the expected distribution function. 

\begin{rem}
In the case $S=N$ we obtain the typical convergence rate of order $\mathcal O(1/\sqrt{N})$ of Monte Carlo in the phase space, where $N$ is the number of particles and spectral convergence in $M$ in the random space. Due to the presence of the fast evaluation of interactions, the case $S<N$ induces an additional error of order $\mathcal O(\sqrt{1/S-1/N})$ in the microscopic dynamics. Let $f^{(S,M)}(\theta,x,v,t)$ the empirical distribution density estimated at time $t> 0$ from Algorithm \ref{alg:1} and let $J = \{j_1,\dots,j_S\}$, $|J| = S$, be the vector of indexes sampled uniformly without repetition in $\{1,\dots,N\}$. Then for any test function $\varphi$ we have
\[
\begin{split}
\left| \left\langle f^{(S,M)},\varphi \right\rangle - \left\langle f^{(N,M)},\varphi \right\rangle \right|   &= \left| \dfrac{1}{S} \sum_{j \in J} \varphi(x_j^M,v_j^M) -   \dfrac{1}{N} \sum_{j =1}^N \varphi(x_j^M,v_j^M)\right| \\
 & = \left|\left(\dfrac{1}{S}-\dfrac{1}{N} \right) \sum_{j\in J}\varphi(x_j^M,v_j^M) - \dfrac{1}{N}\sum_{j\not\in J}\varphi(x_j^M,v_j^M) \right| \\
 &\le \left|  \left\langle f,\varphi \right\rangle - \left(\dfrac{1}{S}-\dfrac{1}{N} \right)\sum_{j\in J}\varphi(x_j^M,v_j^M)  \right|   \\
 & \qquad+ \left|  \left\langle f,\varphi \right\rangle - \dfrac{1}{N}\sum_{j\not\in J}\varphi(x_j^M,v_j^M)  \right| = I_1 + I_2,
 \end{split}\]
whose leading error is given by $I_1$ since $S\ll N$ and, for $N\gg 0$, we may observe that the accuracy with which we approximate $f^{(N,M)}$ with $f^{(S,M)}$ is $\mathcal O\left(\sqrt{\dfrac{1}{S}-\dfrac{1}{N} } \right)$ thanks to a central limit theorem-type argument. 
\end{rem}

 \section{Numerical tests}\label{sect:num}
 
 In this section we present several numerical examples based on \eqref{eq:kinetic_CSM} both in the homogeneous and inhomogeneous case. We test the effectiveness of the MC-gPC scheme through several tests  based on VFP equations. In all test the integration of the system of stochastic differential equations \eqref{eq:systemSDE} is performed through a standard Euler-Maruyama method whereas the solution of the system of PDEs derived from the SG procedure is solved through a standard central scheme coupled with a fourth order Runge-Kutta integration. In the whole section we will consider a uniform noise, therefore Gauss-Legendre polynomial basis are chosen in the gPC setting. Numerical investigations on the influence of uncertainties in phase transition phenomena are presented through the section. Finally, we explore the non localized inhomogeneous model with Cucker-Smale type interactions. 

\subsection{Test 1: Space homogeneous case}

In this first test we consider the space homogeneous problem in 1D with uncertain diffusion parameter of the form
\be\label{eq:diff_def}
D(\theta) = \bar D + \lambda\bar D\theta, \qquad \bar D\ge 0,
\ee
where $\theta\sim\mathcal U([-1,1])$ and $\lambda\le 1$ is such that $D(\theta) \ge 0$ for all $\bar D\ge 0$. In this first test we consider a constant self propelling strength $\alpha(\theta) = \alpha\ge 0$. 
\begin{figure}
\centering
\subfigure[$\alpha= 1$, $\bar D = 0.2$]{
\includegraphics[scale=0.45]{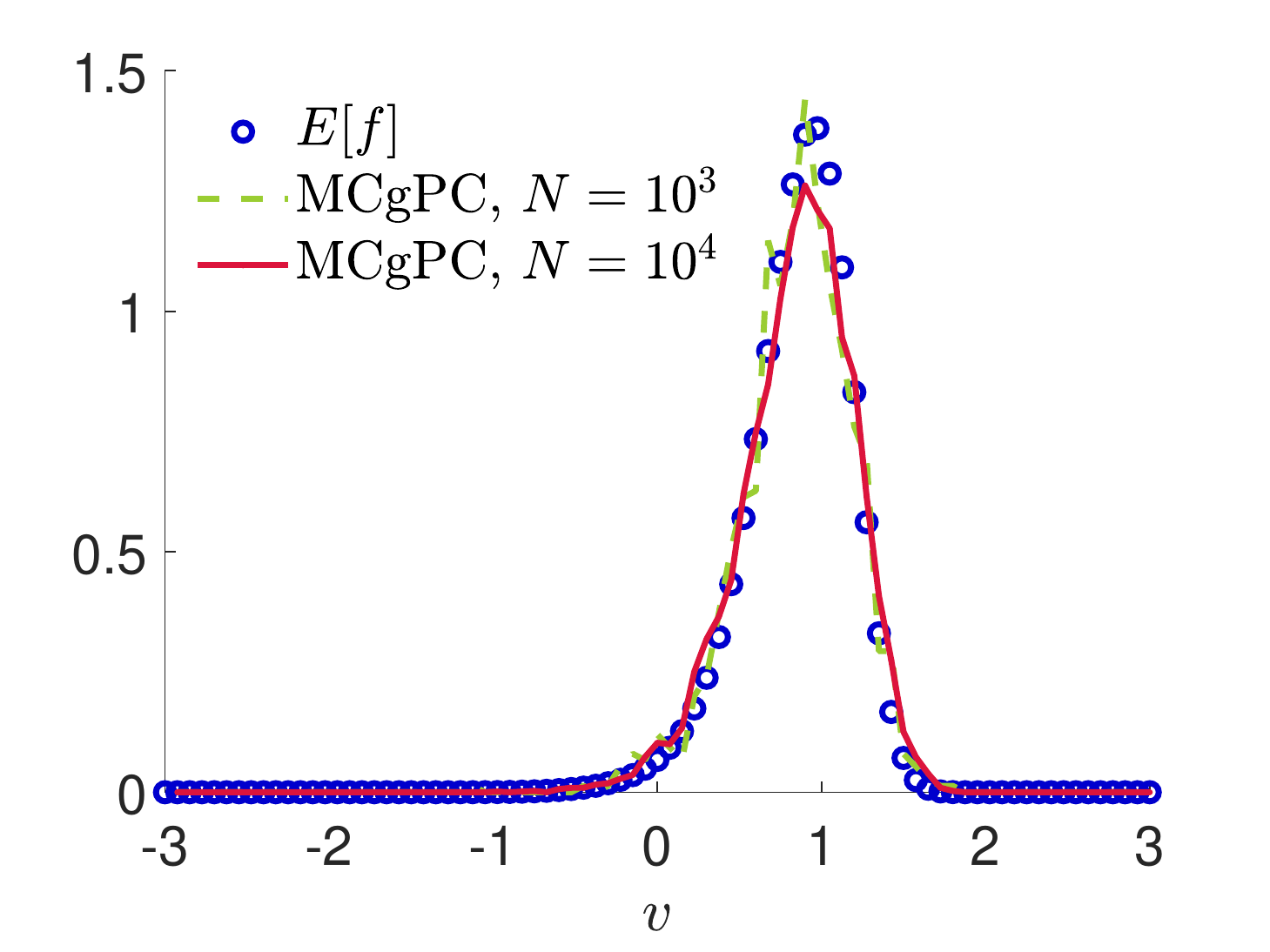}}
\subfigure[$\alpha = 1$, $\bar D = 0.8$]{
\includegraphics[scale=0.45]{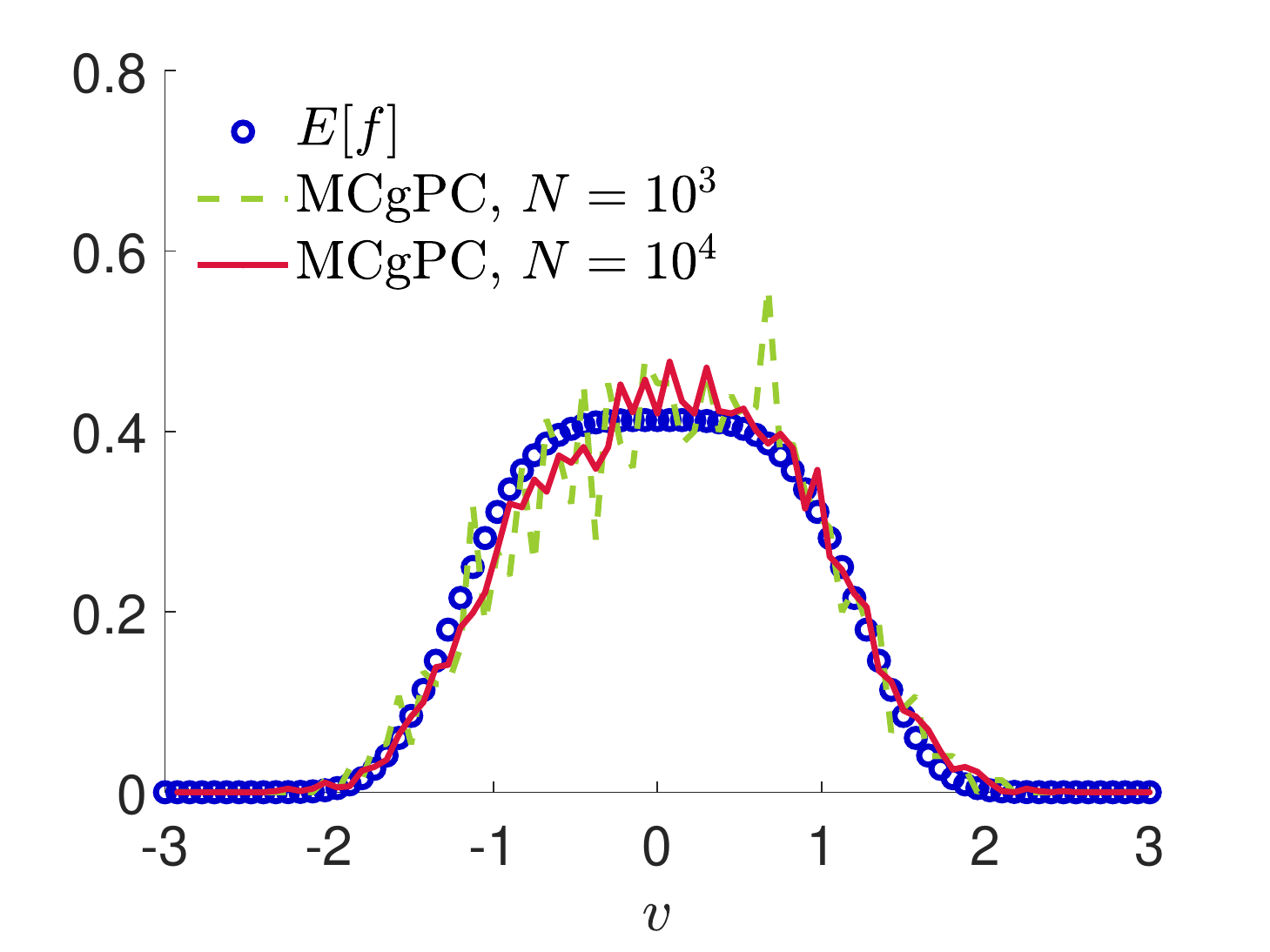}} \\
\subfigure[$\alpha= 2$, $\bar D = 0.2$]{
\includegraphics[scale=0.45]{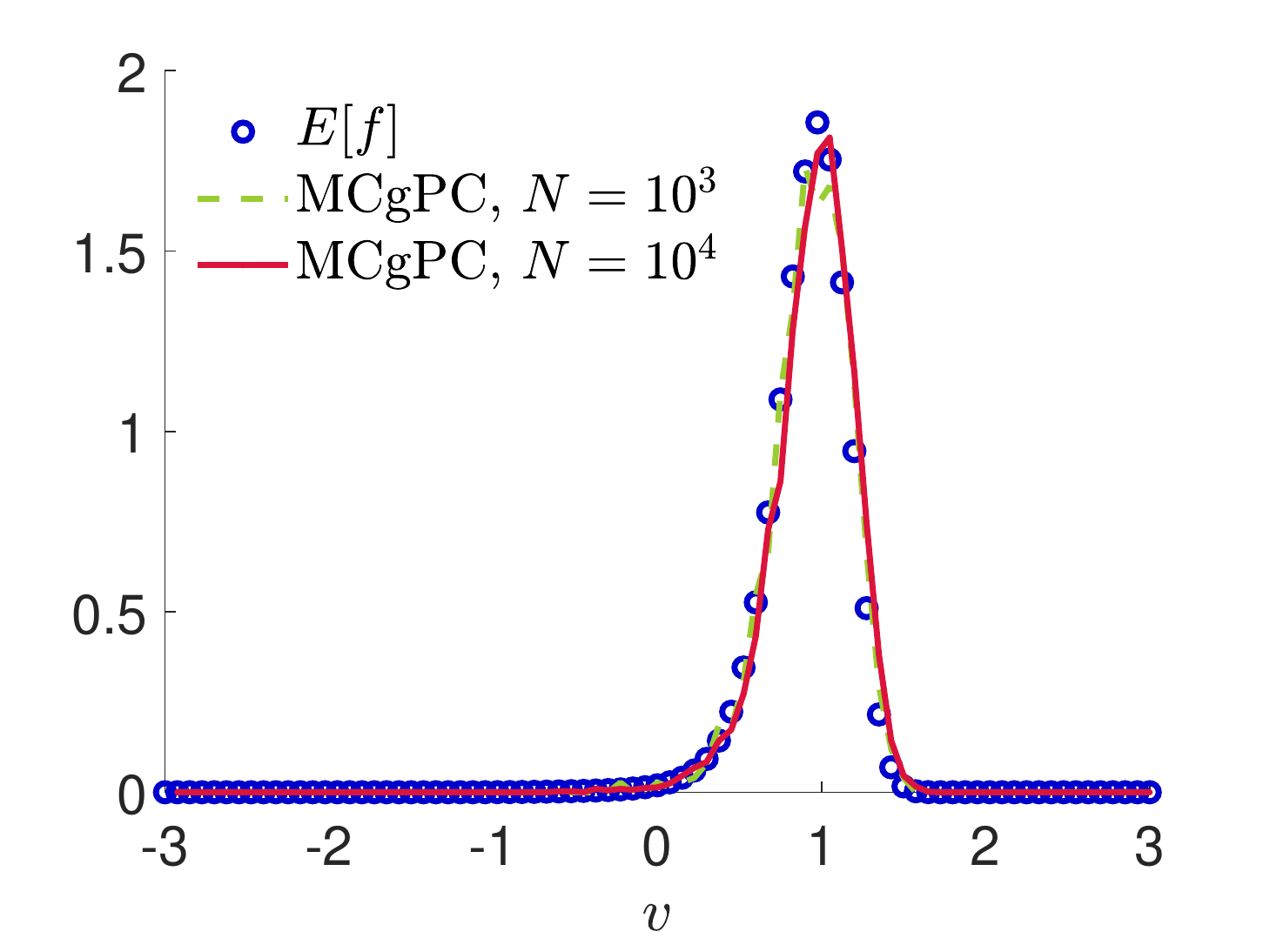}}
\subfigure[$\alpha= 2$, $\bar D = 0.8$]{
\includegraphics[scale=0.45]{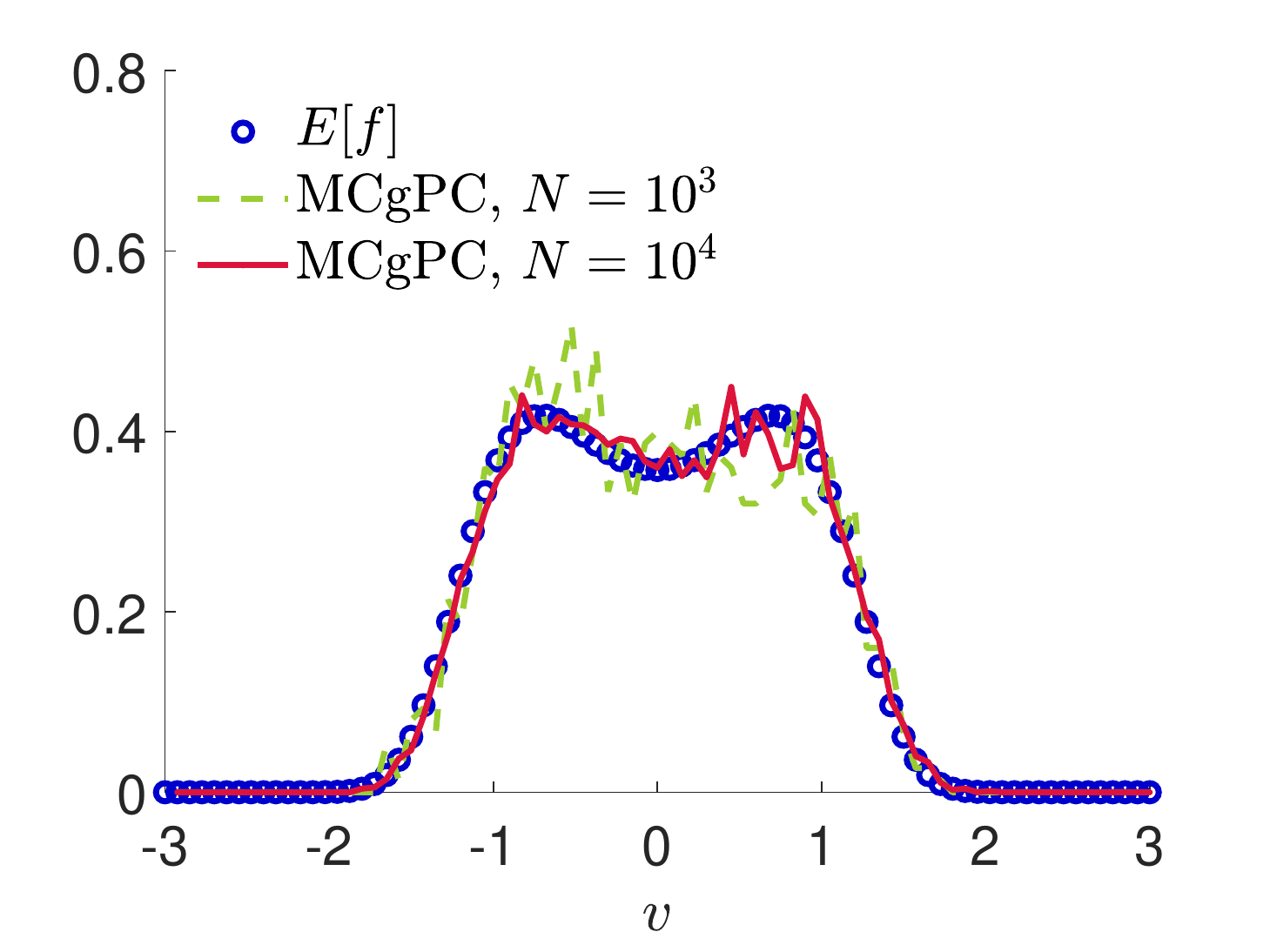}}
\caption{\textbf{Test 1}. We compare the expected density at time $T= 50$ obtained from \eqref{eq:test1D_gPC} through the gPC system and its MCgPC approximation with an increasing number of samples $N=10^3$, $N = 10^4$ and $M = 4$ gPC modes. We considered case $\bar D = 0.2$ (left column) and $\bar D=0.8$ (right column) and $\lambda = 0.1$, $\alpha = 1$ (top row) and $\alpha = 2$ (bottom row), the velocity space is discretized with $N_v = 81$ gridpoints and $\Delta t= 10^{-2}$.  }
\label{fig:1D_alpha1}
\end{figure}

The evolution of the density function $f(\theta,v,t)$, $v\in\RR$ is ruled by the following PDE 
\[\label{eq:test1D}
\partial f(\theta,v,t) = \partial_v \left[ \alpha(|v|^2-1)f(\theta,v,t) + (v-u_f(\theta,t))f(\theta,v,t) + D(\theta)\partial_v f(\theta,v,t) \right],
\]
whose SG-gPC approximation is given for all $h = 0,\dots,M$ by 
\be\begin{split}\label{eq:test1D_gPC}
\partial_t \hat f_h(v,t) &= \partial_v \left[ \alpha(|v|^2-1)v \hat f_h(v,t)+\sum_{k=0}^M P_{hk} \hat f_k(v,t)+ \sum_{k=0}^M D_{hk} \partial_v \hat f_k(v,t)\right],
\end{split}\ee
being 
\[
\begin{split}
P_{hk} &= \dfrac{1}{\|\Phi_h^2 \|_{L^2(\Omega)}} \int_{\IT}(v-u_{f^M})\Phi_h(\theta)\Phi_k(\theta)d\Psi(\theta),\\
D_{hm} &= \dfrac{1}{\|\Phi_h^2 \|_{L^2(\Omega)}}  \int_{\IT}D(\theta)\Phi_h(\theta)\Phi_m(\theta)d\Psi(\theta). 
\end{split}
\]
At the particle level we obtain from \eqref{eq:system_SDEh} the following coupled system of SDEs for the evolution of the particles' velocities
\[
d \hat v_{i,h} =\left( \sum_{k=0}^M s_{hk}(v_i^M)\hat{v}_{i,k} + \hat u_{h}- \hat v_h\right)dt + d_h dW_i,
\]
being 
\[
\begin{split}
s_{hk}(v_i^M) &= \int_{I_{\theta}} \alpha(1-|v_i^M|^2)\Phi_k(\theta)\Phi_h(\theta)\Psi(\theta)d\theta,\\
d_h &= \int_{I_{\theta}} (2\bar D + 2\lambda \bar D\theta)^{1/2} \Psi(\theta)d\theta.
\end{split}
\]
and $\left\{ W_i \right\}_{i=1}^N$ defines a set of $N$ independent Wiener processes. Furthermore, we indicated with $\hat u_h$ the  $h$th projection of the average velocity of the system. 
\begin{figure}
\centering
\includegraphics[scale=0.5]{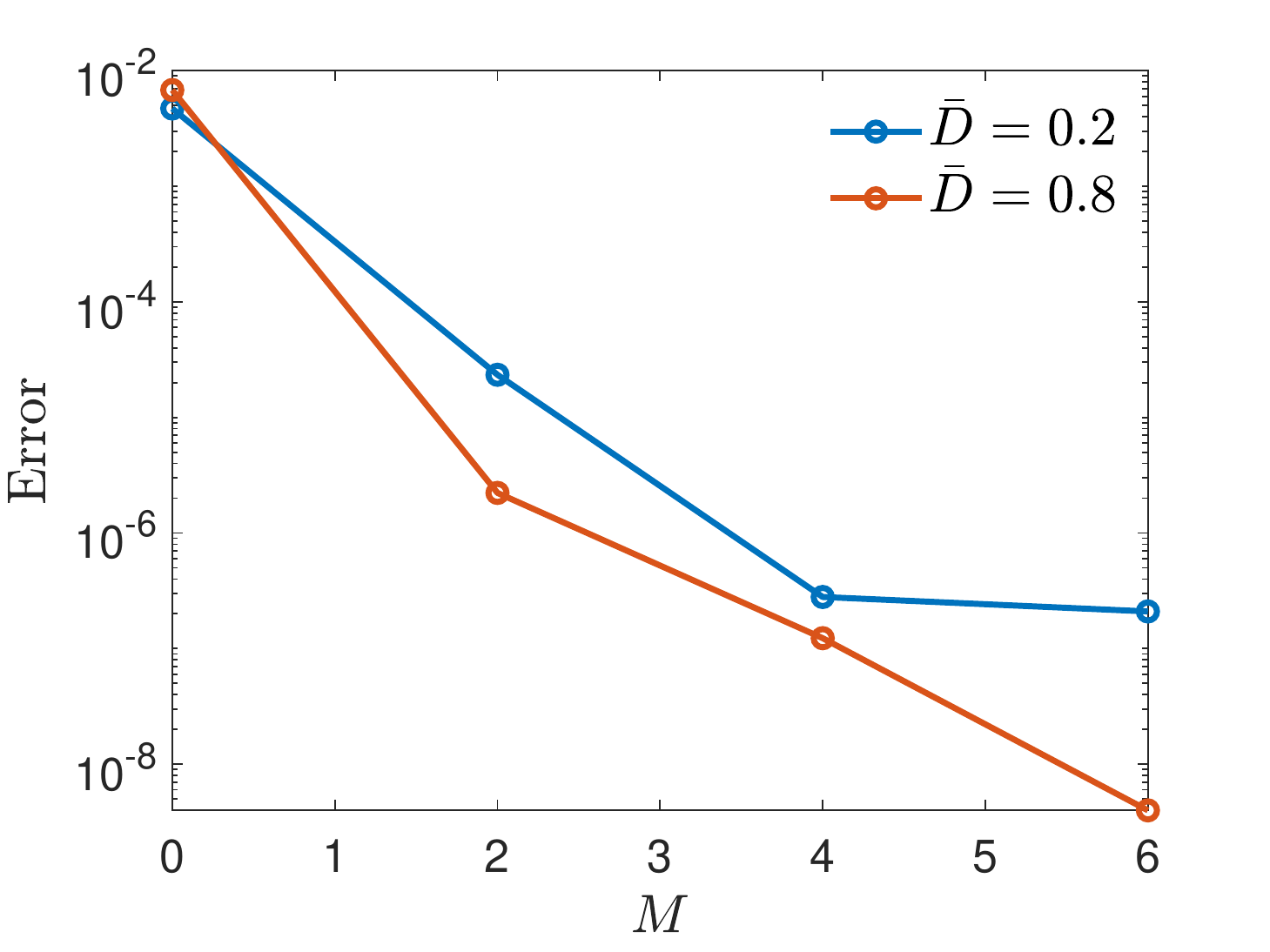}
\caption{\textbf{Test 1}. Convergence in $L^2(\Omega)$ based on a reference temperature $\mathcal T^{ref}$ at time $T=50$ computed at the particle level with a gPC expansion of degree $M=20$. }
\label{fig:L2_error}
\end{figure}

The long time solution is given by \eqref{eq:ss_hom} as discussed in Section \ref{sect:hom}. We consider as deterministic initial condition the Gaussian distribution
\[
f_0(v) = \dfrac{1}{\sqrt{2\pi \sigma^2}} \exp \left\{ -\dfrac{(v-\mu)^2}{2\sigma^2} \right\},\qquad \mu = 1, \sigma^2 = \dfrac{1}{4}. 
\]
At the particle level the initial velocities are a sample of $N\gg 0$ velocities from the distribution $f_0(v)$. In Figure \ref{fig:1D_alpha1} we compare the numerical expected distribution of the SG-gPC system \eqref{eq:test1D_gPC} and the reconstructed expected density of the MCgPC scheme at time $T=50$ for two uncertain diffusion coefficients \eqref{eq:diff_def} with $\bar D=0.2$, $\bar D = 0.8$ and $\lambda=0.1$ in both cases. It is easily observed how for an increasing number of particles the statistical quantities are well approximated by the scheme.

\begin{figure}
\centering
\subfigure[$\lambda = 10^{-1}$]{
\includegraphics[scale=0.45]{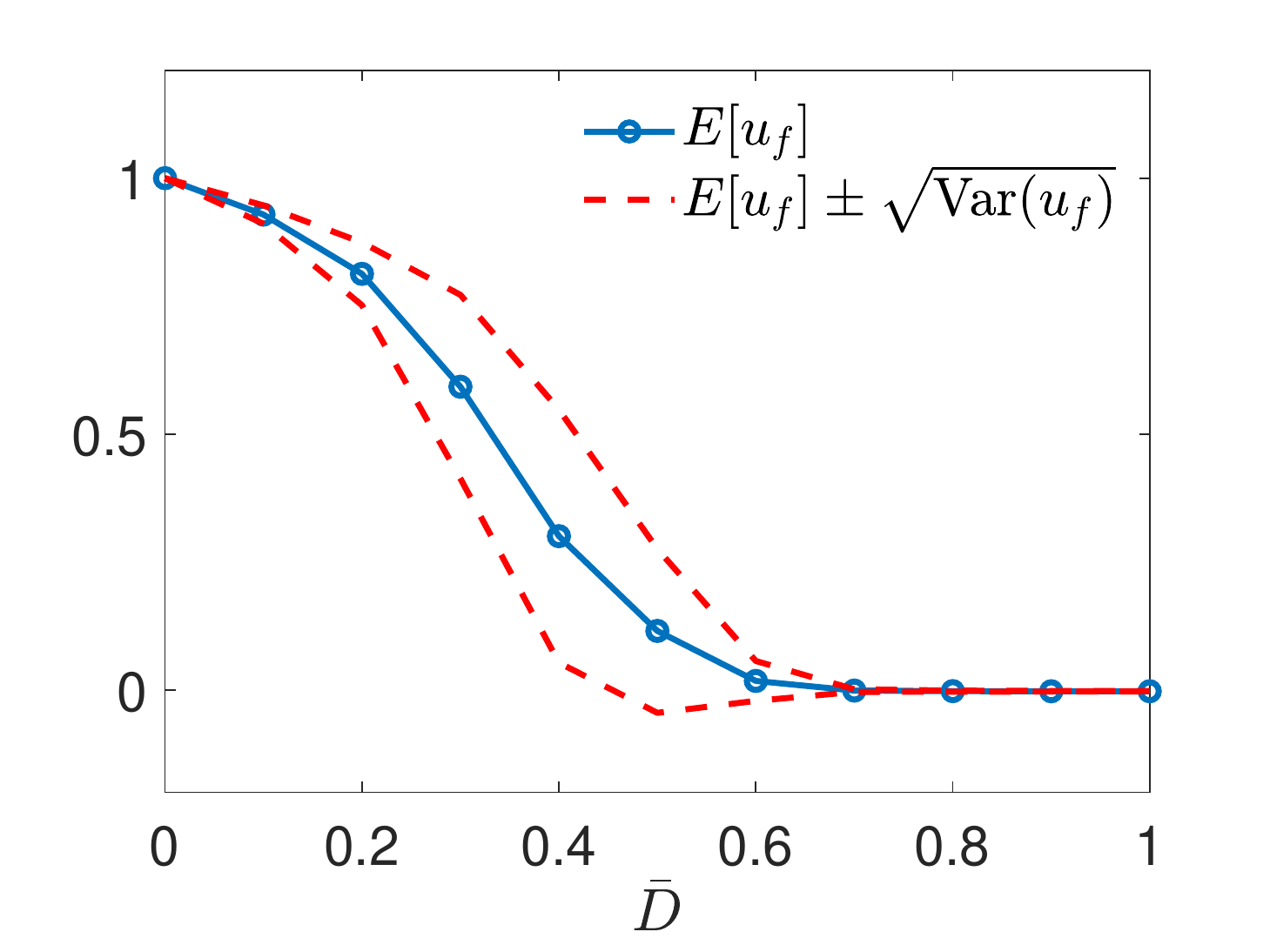}}
\subfigure[$\lambda = 10^{-1}$]{
\includegraphics[scale=0.45]{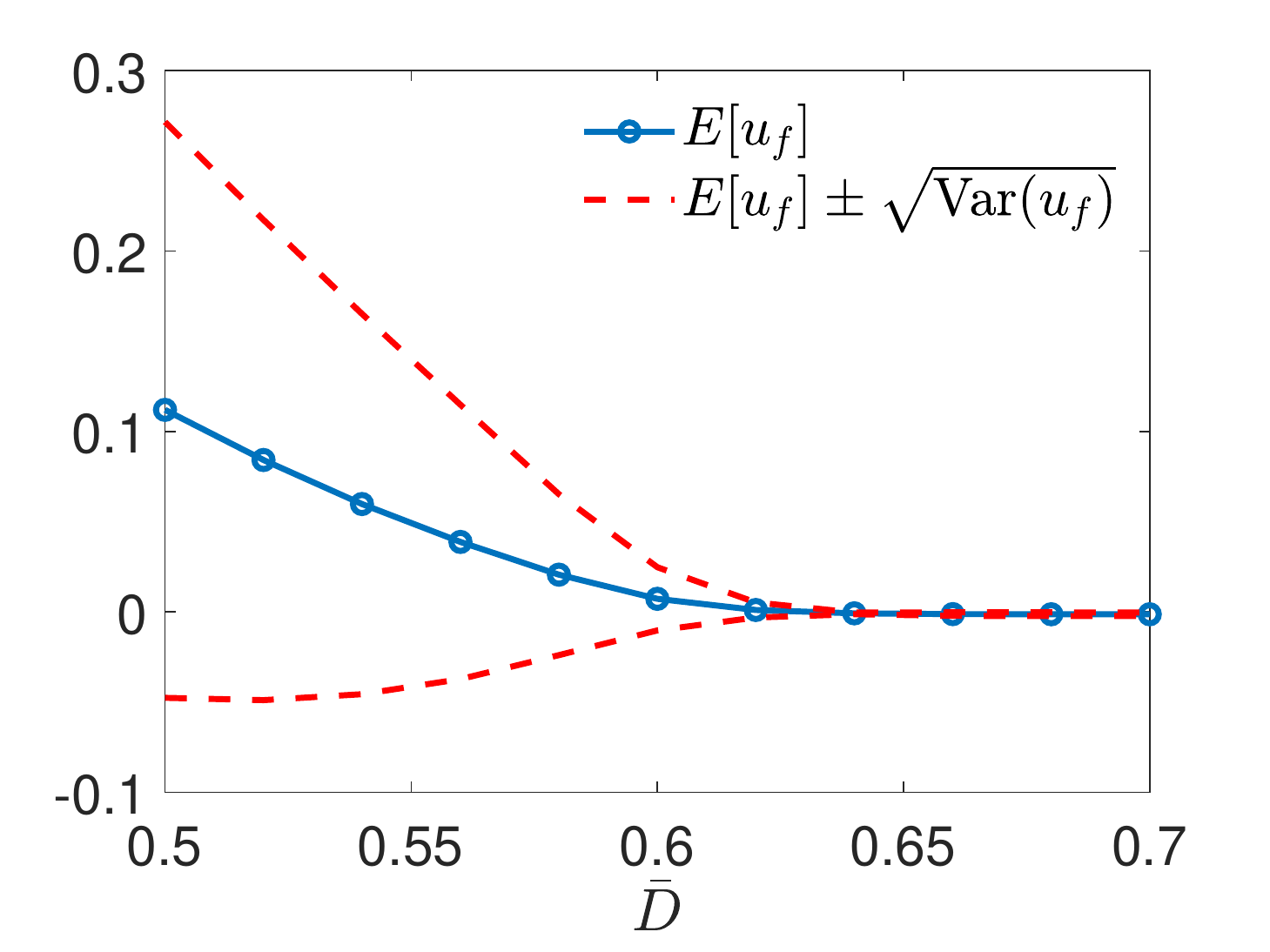}} \\
\subfigure[$\lambda = 10^{-3}$]{
\includegraphics[scale=0.45]{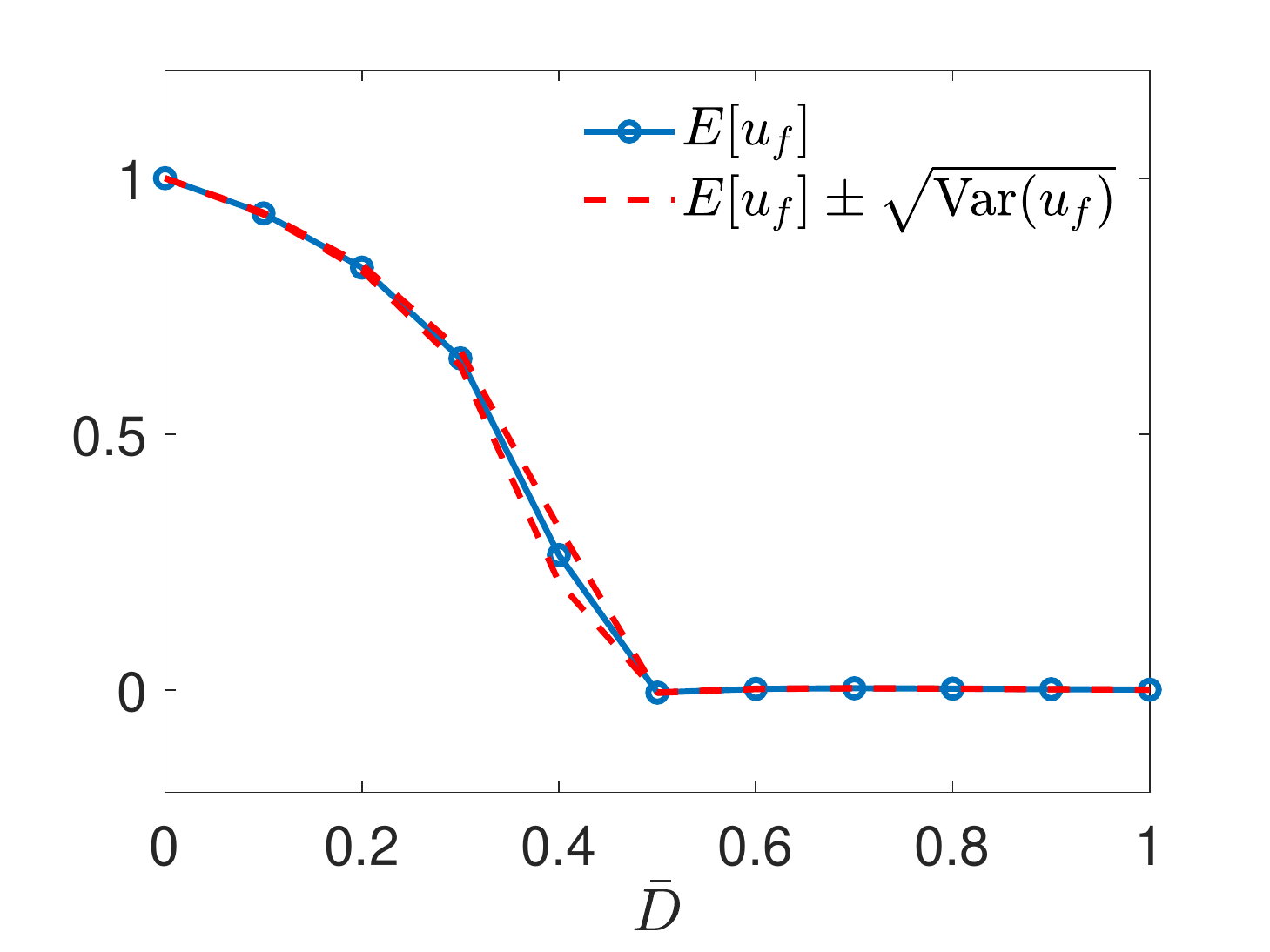}}
\subfigure[$\lambda = 10^{-3}$]{
\includegraphics[scale=0.45]{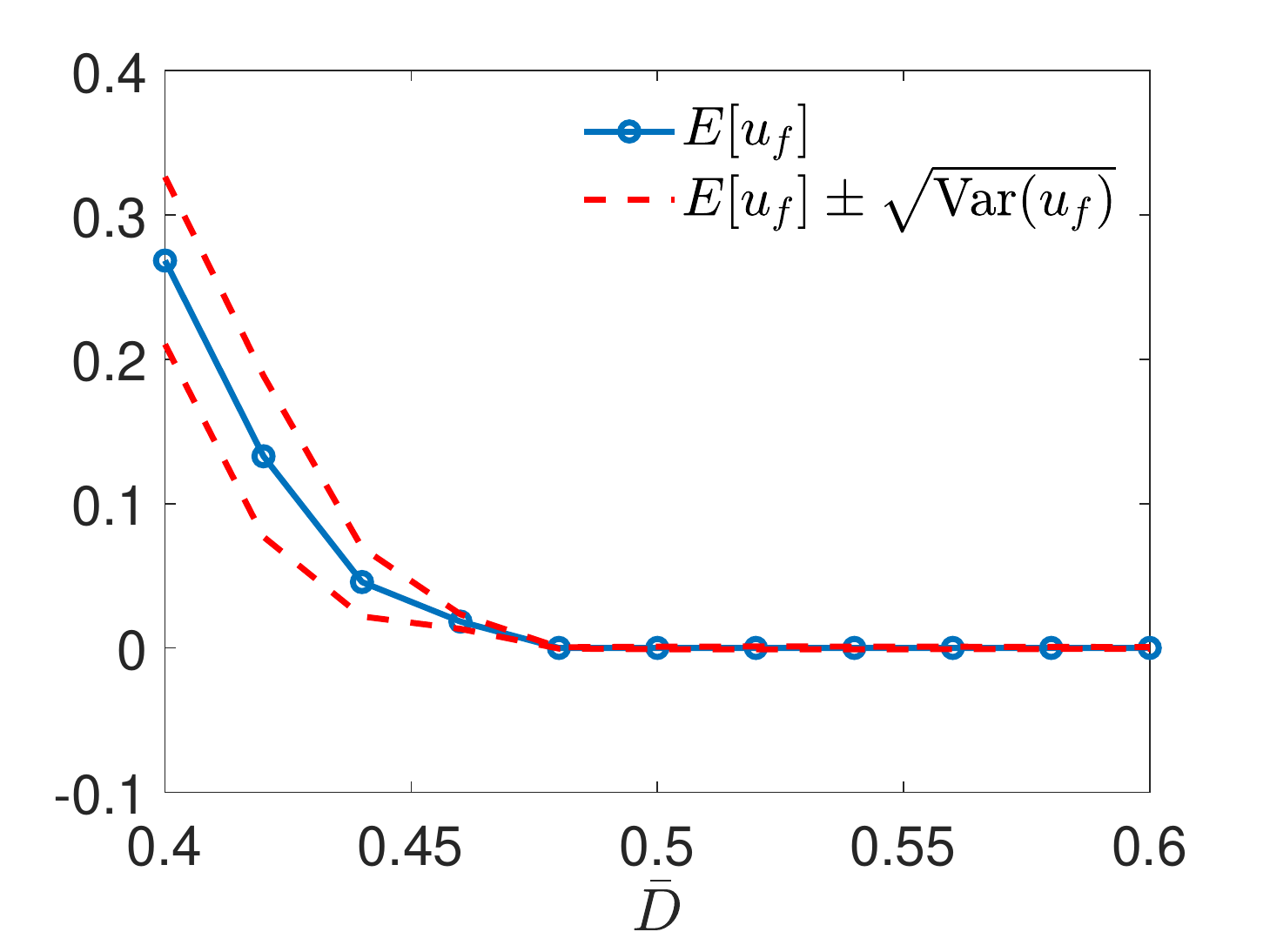}}
\caption{Evolution of the expected average velocity of the system $\mathbb E[u_f(\theta,T)]$ for $T=50$ and several $\bar D = \frac{\kappa}{10}$, $\kappa = 0,\dots,10$. In red we computed the confidence band. The result is obtained through the MCgPC scheme with $N = 10^4$ particles and $M= 4$ gPC modes. We considered an uncertain diffusion of the form \eqref{eq:diff_def} with $\lambda = 10^{-1}$ in the top-left figure and $\lambda = 10^{-3}$ in the bottom-left figure. The right plots are a further refinement of $\bar D$ in a subinterval near the phase transition.  Different sizes of the variability region for $\mathbb E[u_f]$ indicates different sensibilities to the action of uncertainties. }
\label{fig:Dbands}
\end{figure}

In Figure \ref{fig:L2_error} we study the convergence of the temperature of the system 
\[
\mathcal T(\theta,t) =  \int_\RR (v-u(\theta,t))^2 f(\theta,v,t)dv,
\] 
in $L^2(\Omega)$ obtained through the MCgPC algorithm for an increasing order of the gPC expansion. We considered  $N=10^4$ particles and two uncertain diffusion parameters of the form \eqref{eq:diff_def} with $\bar D= 0.2$ and $\bar D= 0.8$, $\lambda =0.1$ and $\theta\sim \mathcal U([-1,1])$. Time integration has been performed up to time $T=50$ with time step $\Delta t = 10^{-2}$. 

To complete the overview of the one dimensional setting we computed in Figure \ref{fig:Dbands} the large time behaviour of the expected average velocity $\mathbb E[u_f(\theta,T)]$ for several $\bar D = \frac{\kappa}{10}$, $\kappa = 0,\dots,10$ with constant self-propulsion term $\alpha =1$. We considered an uncertain diffusion of the form \eqref{eq:diff_def} with $\lambda = 10^{-1}$ (top row) and $\lambda = 10^{-3}$ (bottom row). In each figure we marked in dashed red the confidence band computed in terms of the approximated standard deviation $\sqrt{\textrm{Var}(u_f)}$. 

We can easily observe the regions of maximal sensitivity with respect to the presence of uncertainties. In particular, for high diffusion values the variability of the expected average velocity vanishes and we may argue that the phase transition predicted in \cite{BCCD} is actually a quite stable pattern in the space homogeneous regime. Nevertheless, the averaging of uncertain quantities acts as a smoothing factor of the phase transition as we can clearly observe in Figure \ref{fig:Dbands}-(b) and \ref{fig:Dbands}-(d). For a vanishing influence of $\theta\in I_{\theta}$ given by $\lambda\rightarrow 0$ in \eqref{eq:diff_def} the transition becomes sharper coherently with the deterministic case, see figure \ref{fig:Dbands}-(d). Summarizing the main effect of the uncertainties is the smoothing of the transition point making it less sharper and abrupt than in the deterministic cases.

\subsection{Test 2. Space inhomogeneous case}

In this section we focus on inhomogeneous models. First we consider the localised case for which a phase transition is expected to happen similarly to the homogeneous case, see \cite{BCCD}, even if this has not yet been proved. To explore other possible alternatives we will also consider the case of Cucker-Smale type interactions for which no analogous theoretical results regarding phase transitions currently exist. In all the tests of the present section we consider a sample of $N = 10^5$ particles for the MCgPC scheme. 

\subsubsection{ Test 2A. Localized interaction case}
We consider a space dependent interaction function of Dirac delta form, i.e. $P(x,x_*) = \delta(x-x_*)$. With this choice agents interact only is they share the same position. In this case, the MCgPC scheme translates in a second order system of SDEs of the form 
\[
\displaystyle 
\begin{cases}
d\hat x_{i,h} = \hat v_{i,h}dt \\
d \hat v_{i,h} = \displaystyle  \left( \sum_{k=0}^M s_{hk}(v_i^M)\hat{v}_{i,k} + \dfrac{1}{N} \sum_{k=0}^M \sum_{i=1}^N P_{hk}^{ij} (\hat{v}_{j,k}-\hat{v}_{i,k})\right)dt + d_h dW_i, 
\end{cases}
\]
where $s_{hk}$ has the same definition of the space homogeneous case. As before we consider an uncertain diffusion parameter $D(\theta)$ and a constant self-propulsion $\alpha=1$.

\begin{figure}
\centering
\subfigure[gPC,  $t=0.5$]{
\includegraphics[scale=0.3]{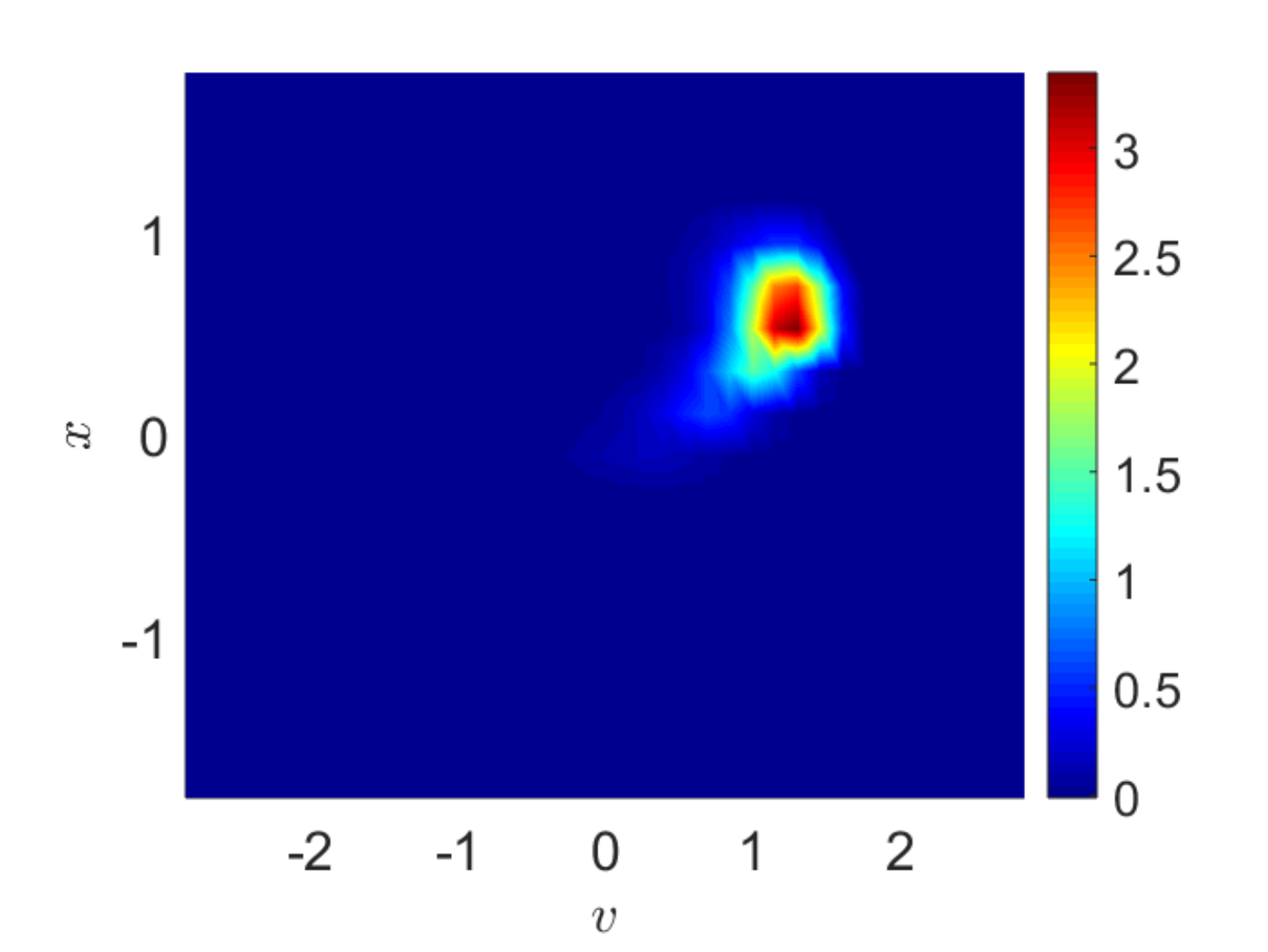}}
\subfigure[gPC,  $t=1.0$]{
\includegraphics[scale=0.3]{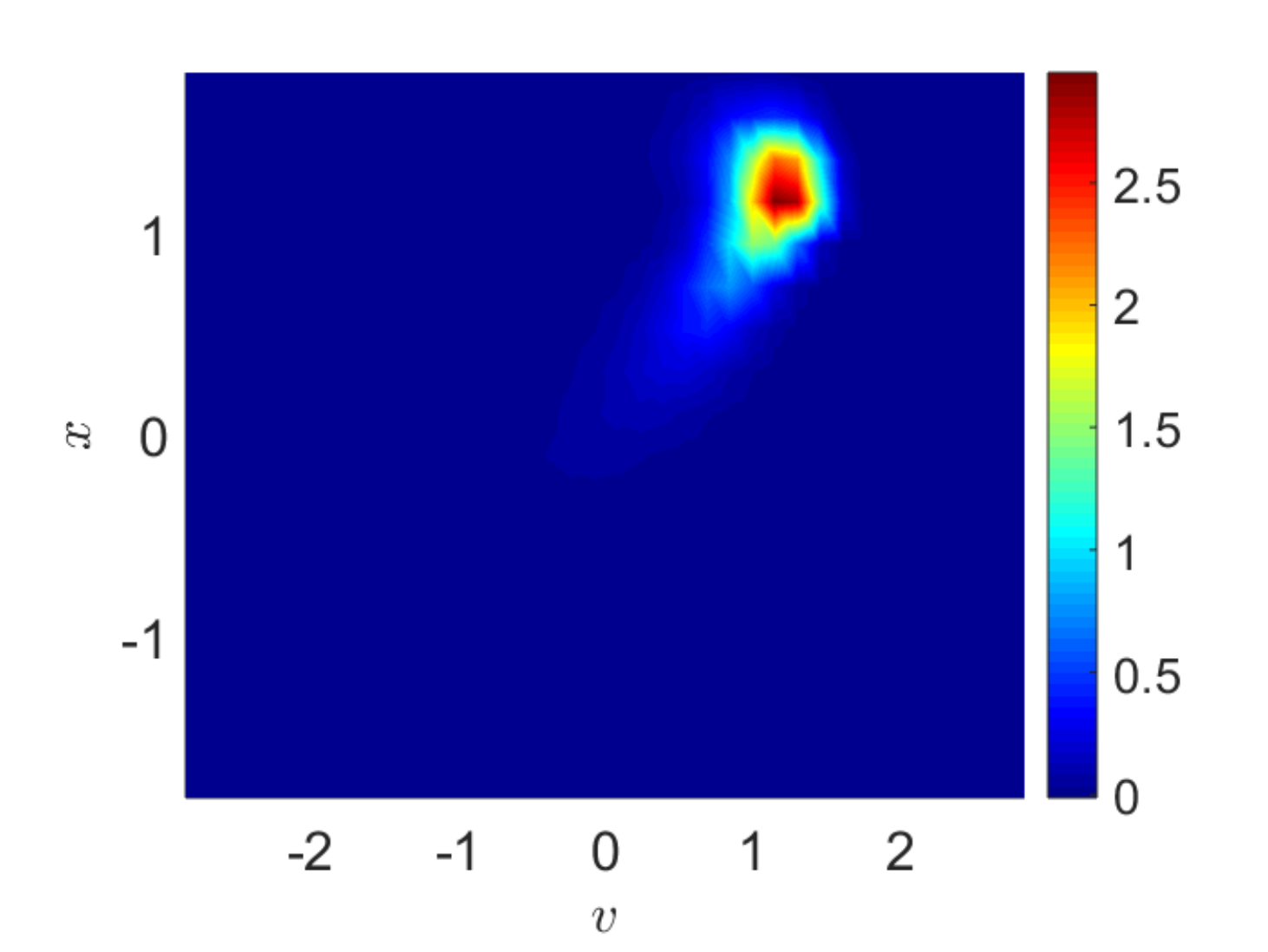}}
\subfigure[gPC, $t=5.0$]{
\includegraphics[scale=0.3]{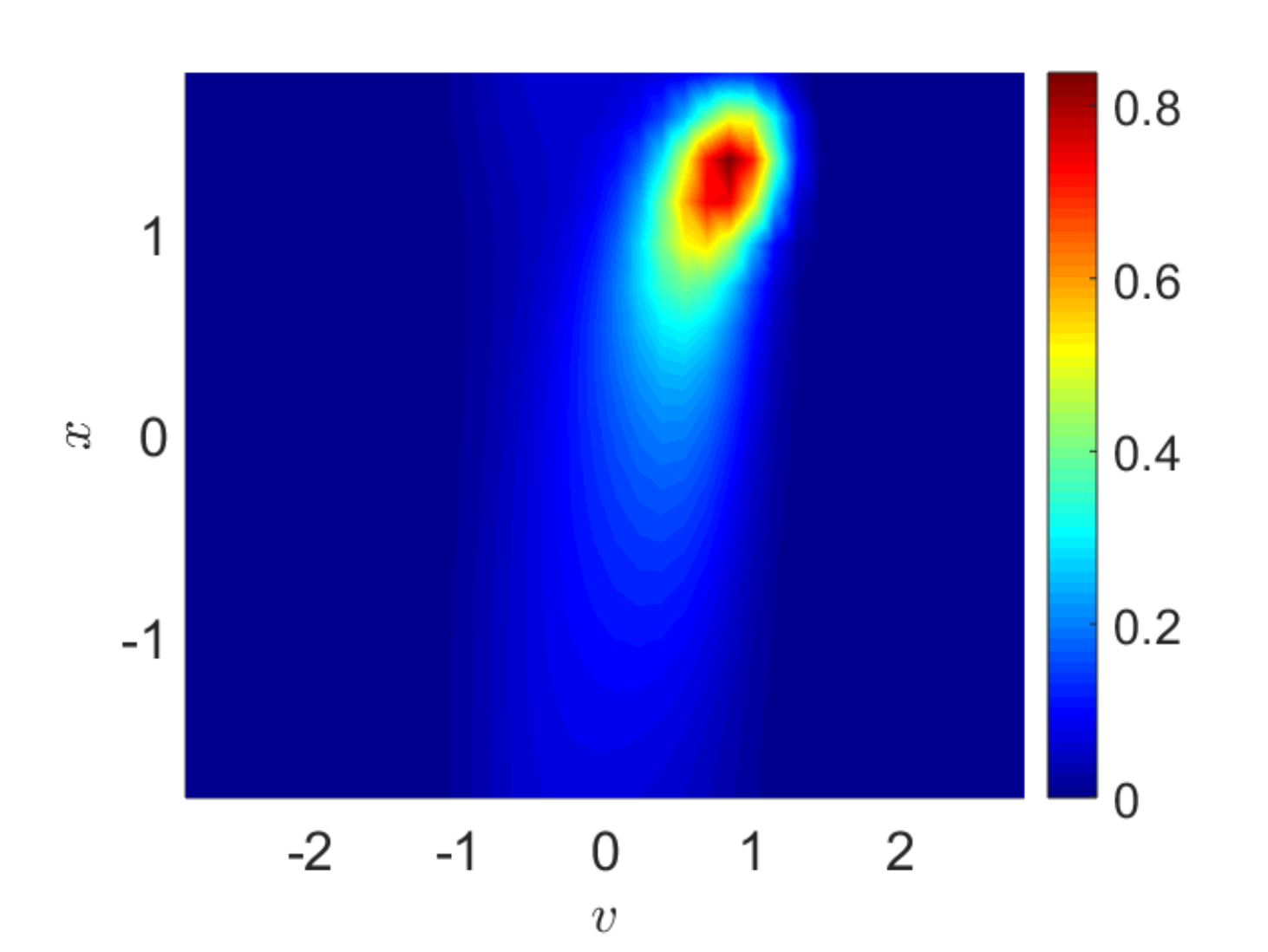}} \\
\subfigure[MCgPC, $t=0.5$]{
\includegraphics[scale=0.3]{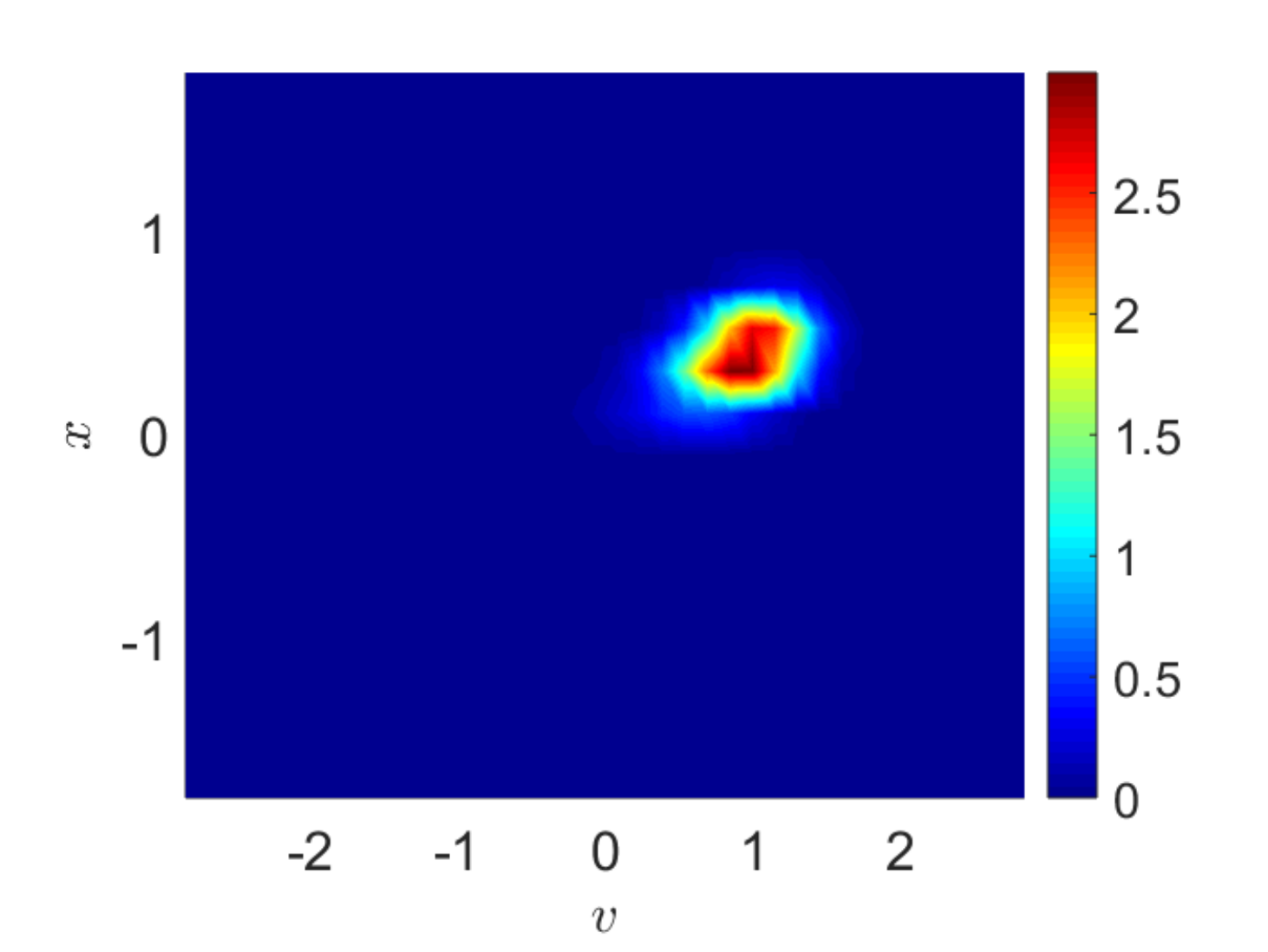}}
\subfigure[MCgPC, $t=1.0$]{
\includegraphics[scale=0.3]{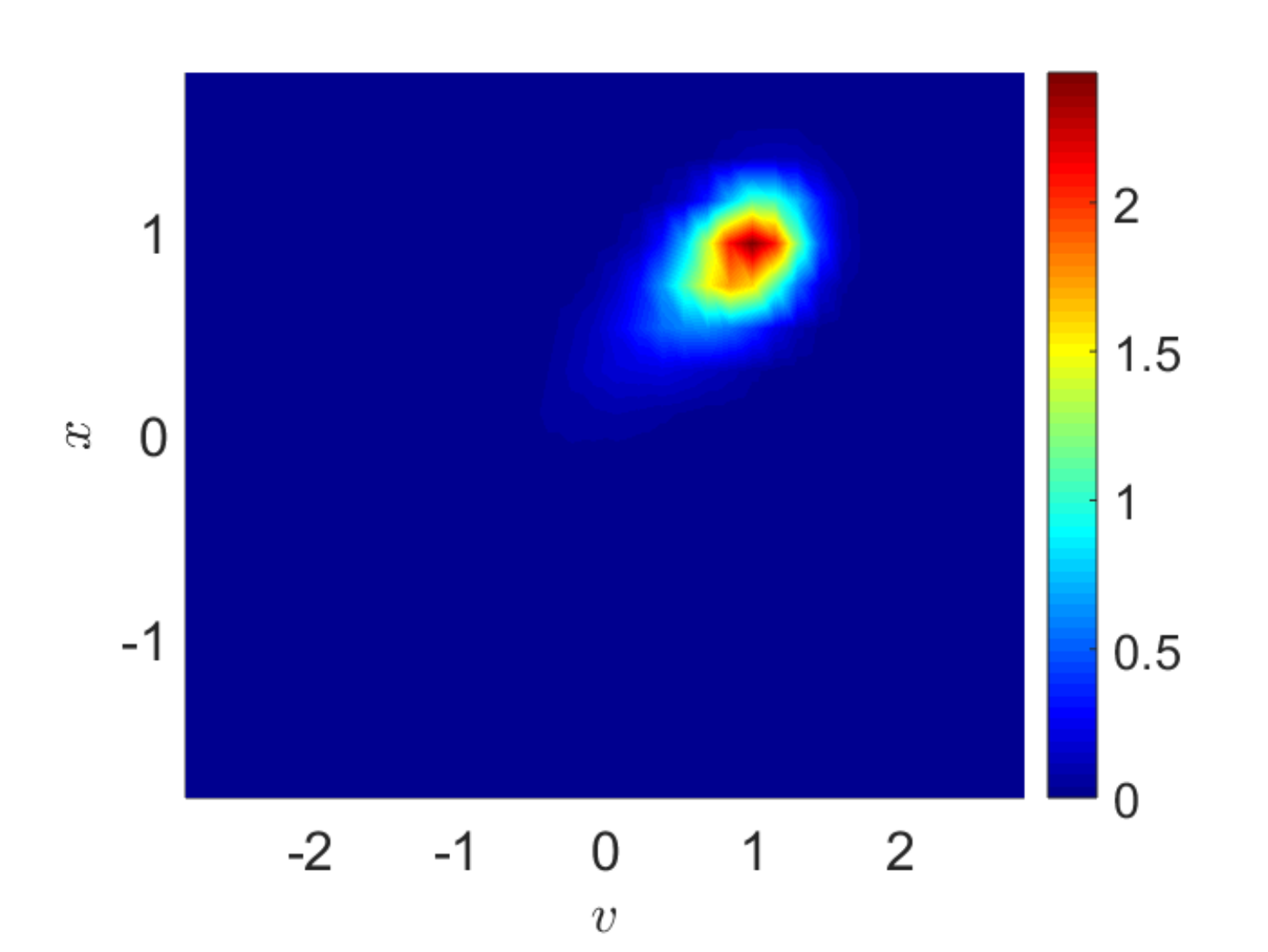}}
\subfigure[MCgPC, $t=5.0$]{
\includegraphics[scale=0.3]{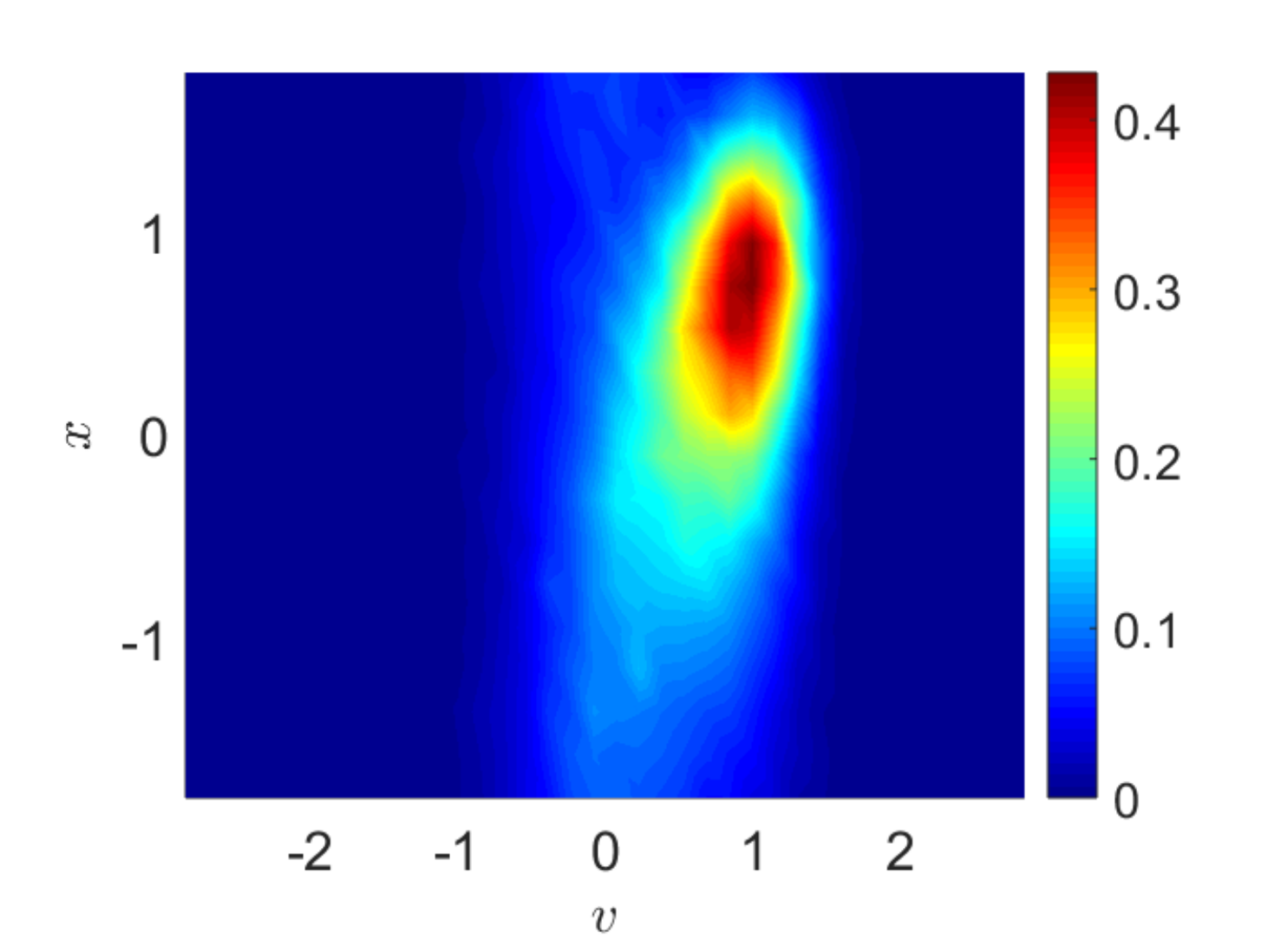}}
\caption{\textbf{Test 2A}. Evolution of the expected density in the inhomogeneous kinetic equation \eqref{eq:MF_general} with localized interactions at times $t = 0.5, 1.0, 5.0$. We considered deterministic self-propulsion strength $\alpha = 1$ and uncertain diffusion of the form \eqref{eq:diff_def} with  $\bar D=0.2$ and $\lambda = 0.1$. \textbf{Top row}: numerical solution of the gPC system.  \textbf{Bottom row}: reconstruction of the expected density obtained through MCgPC scheme for $N = 10^5$ particles with mean-field MC algorithm with interaction subset of $S=10$ agents. We considered $N_x=20$ gridpoints in the interval $[-2,2]$ for the space discretization and $N_v = 40$ gridpoints in the interval $[-3,3]$ for the velocity discretization. }
\label{fig:inhom_loc_D02}
\end{figure}

 We introduce a discretization of the phase space $\{x_i\}_{i=1}^{N_x}$, $\{v_i\}_{i=1}^{N_v}$ such that $x_{i+1}-x_i=\Delta x$, $v_{i+1}-v_i=\Delta v$ made by $N_x = 20$ gridpoints for the space variable and by $N_v = 40$ points for the velocity variable. At time $t=0$ we consider the following multivariate Gaussian distribution 
\[
f_0(x,v) = \dfrac{1}{2\pi \sigma_x \sigma_v} \exp\left\{-\dfrac{1}{2}\left[ \dfrac{(x-\mu_x)^2}{2\sigma_v^2}+ \dfrac{(v-\mu_v)^2}{2\sigma_v^2}  \right] \right\}
\]
we considered $\mu_x = 0$, $\mu_v = 1$, $\sigma_v = \frac{1}{10}$, $\sigma_x = \frac{1}{100}$. Let us consider the evolution of the gPC system of PDEs derived in \eqref{eq:gPC_general} with localized interactions in \eqref{eq:Phk}. At the numerical level we perform a second order Strang splitting approach. The transport step is integrated through a third order Runge-Kutta scheme with a fifth order WENO reconstruction \cite{CWS}. In the following we compare the behavior of the continuous approximation with the one obtained through the MCgPC scheme where, to localize the interaction, we considered as a smoothing factor the indicator function of the numerical cell $[x_i, x_{i+1}]$.

In Figure \ref{fig:inhom_loc_D02} and Figure \ref{fig:inhom_loc_D08} we compare the evolution of the expected density over the time interval $[0,5]$ for the inhomogeneous kinetic equation \eqref{eq:MF_general} with local interactions. In particular, we present the expected density obtained through numerical integration of the SG-gPC system of PDEs with the one approximated through the MCgPC scheme with fast evaluation of the interaction term through a mean-field MC approach described in Algorithm \ref{alg:1} and interaction subset of $S= 10$ particles. The uncertain diffusion has the form \eqref{eq:diff_def} with $\bar D = 0.2$, $\bar D = 0.8$ and $\lambda = 0.1$. Periodic boundary conditions has been considered. The results are in agreement with the space homogeneous case since the interaction are localised, it is easily observed how a phase transition occurs switching from an ordered state to a chaotic isotropic state for increasing values of the diffusion. In Figure \ref{fig:kkk} the average in position distributions in velocity are compared between the MCgPC and gPC. These results corroborate the appearance of a smoothed phase transition: a shifted distribution with positive average velocity for small values of the average diffusion is observed while for larger diffusion the distribution becomes almost symmetric.

\begin{figure}
\centering
\subfigure[gPC,  $t=0.5$]{
\includegraphics[scale=0.3]{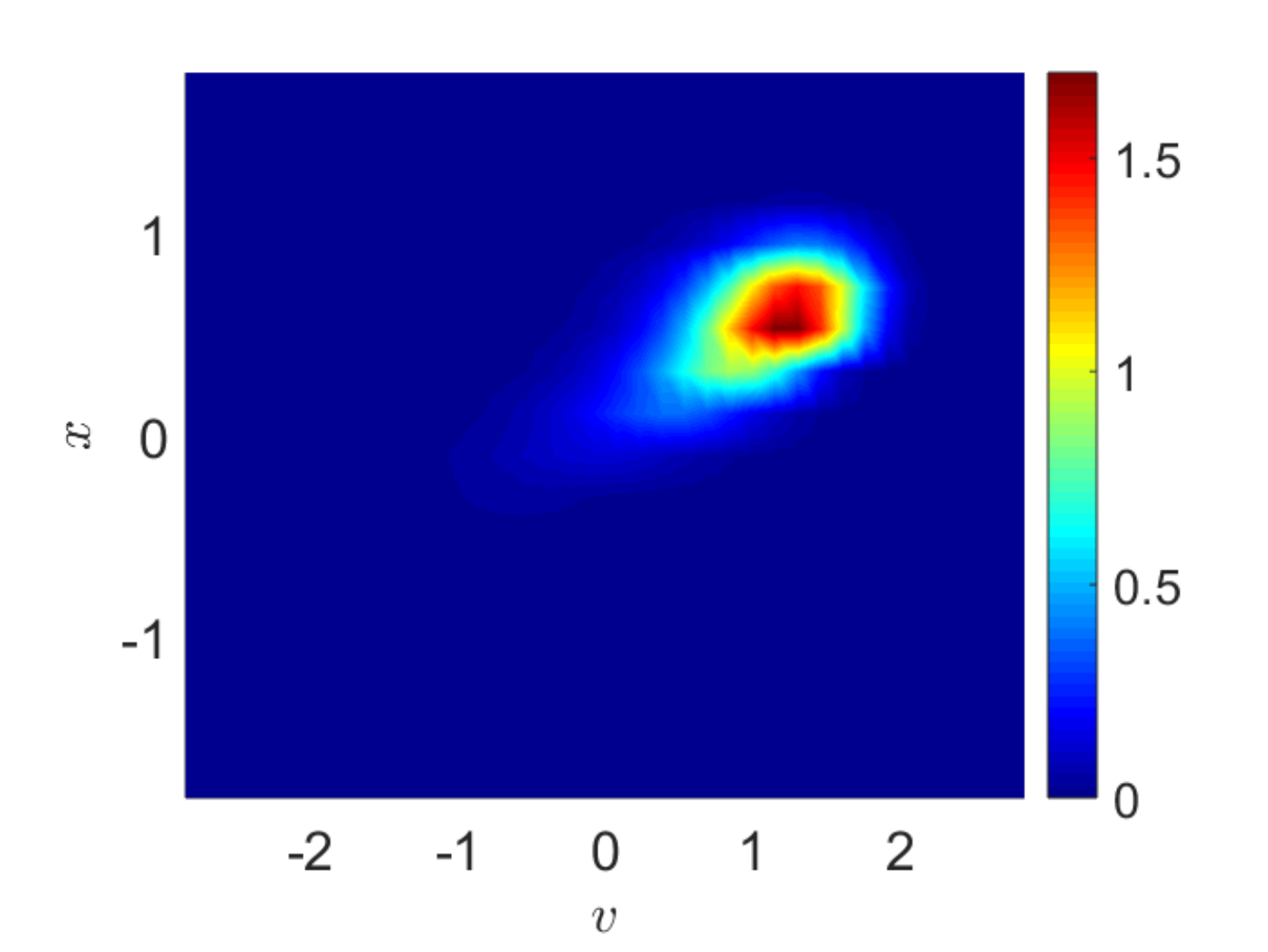}}
\subfigure[gPC,  $t=1.0$]{
\includegraphics[scale=0.3]{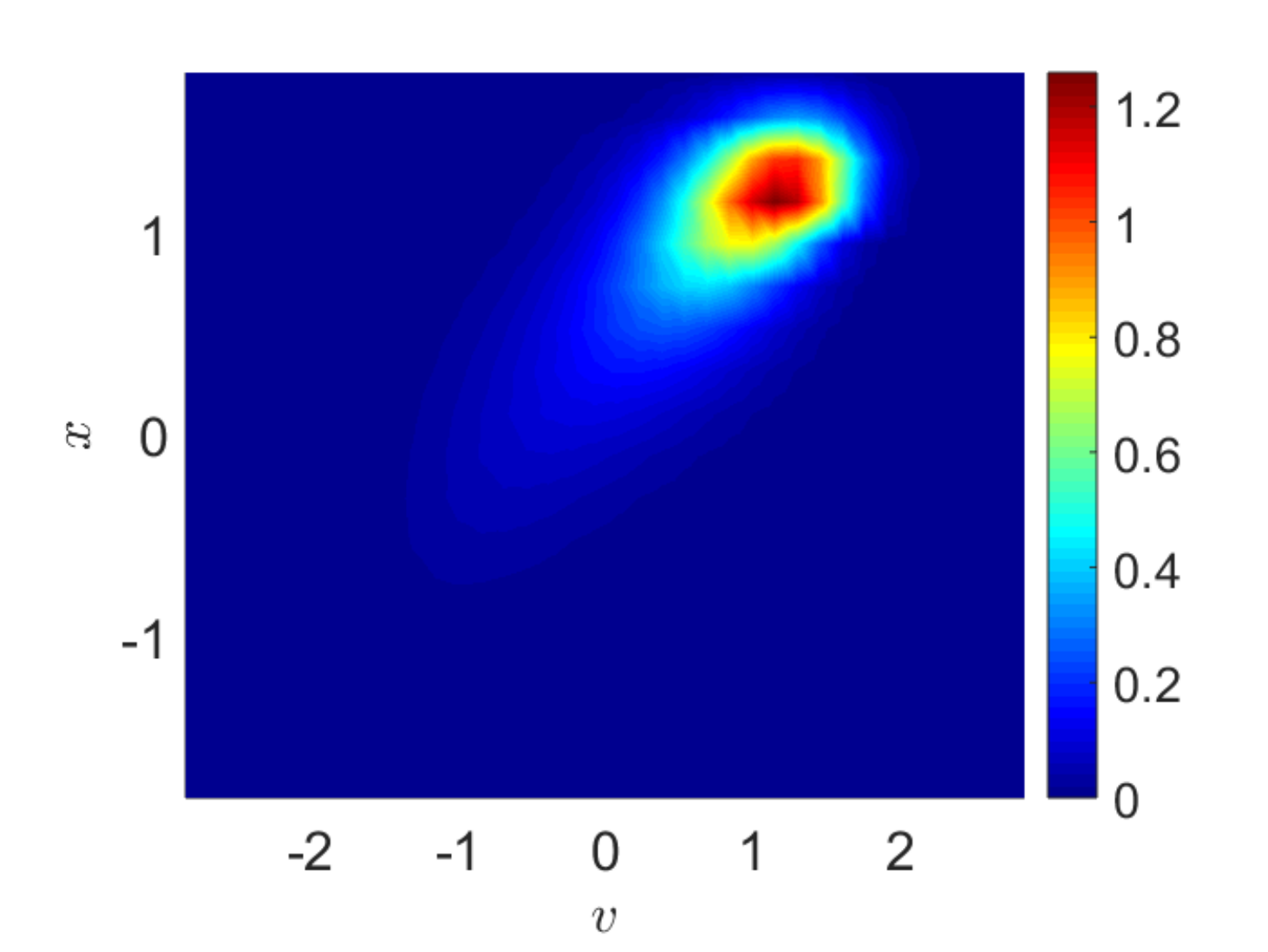}}
\subfigure[gPC, $t=5.0$]{
\includegraphics[scale=0.3]{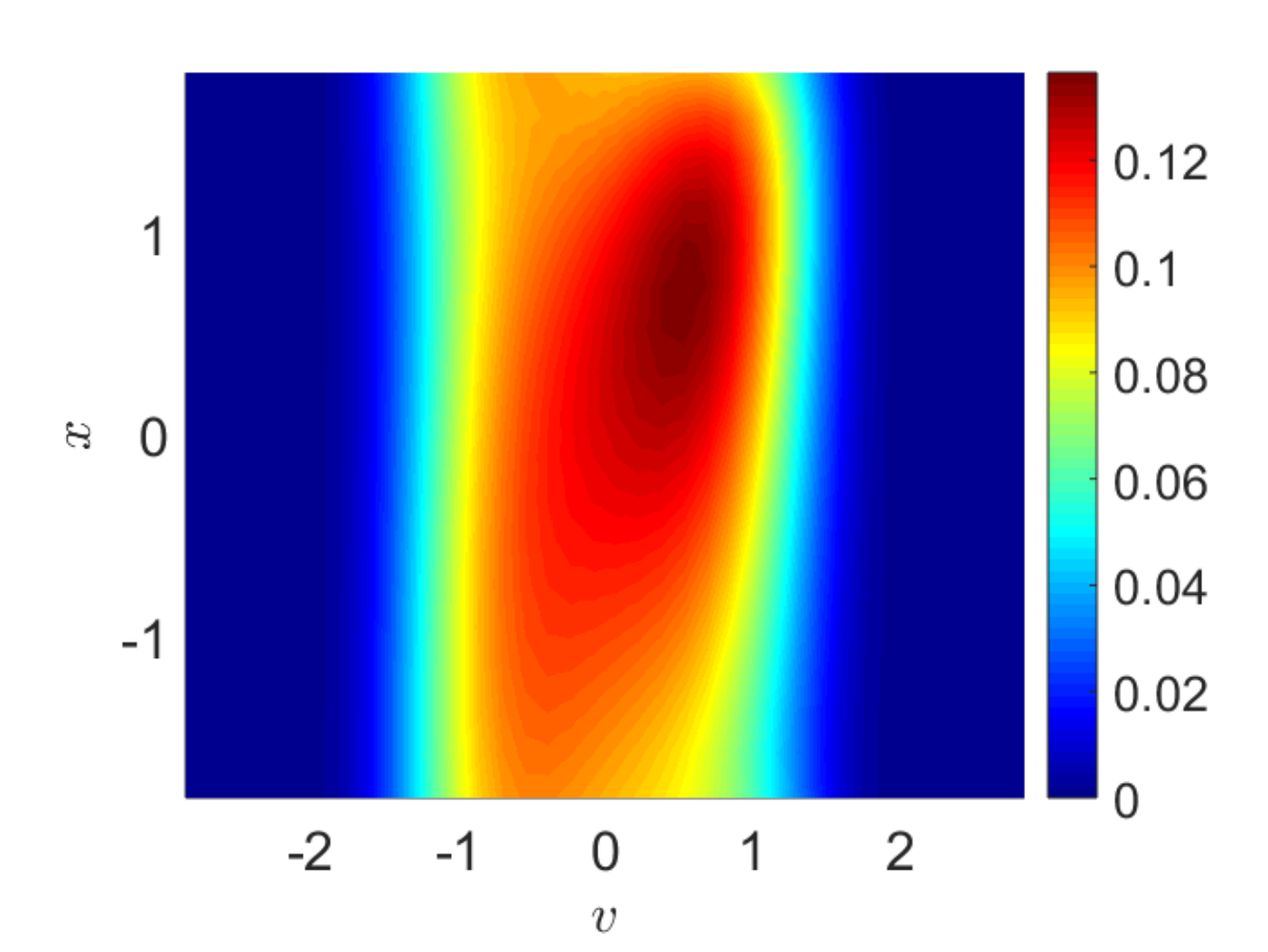}} \\
\subfigure[MCgPC, $t=0.5$]{
\includegraphics[scale=0.3]{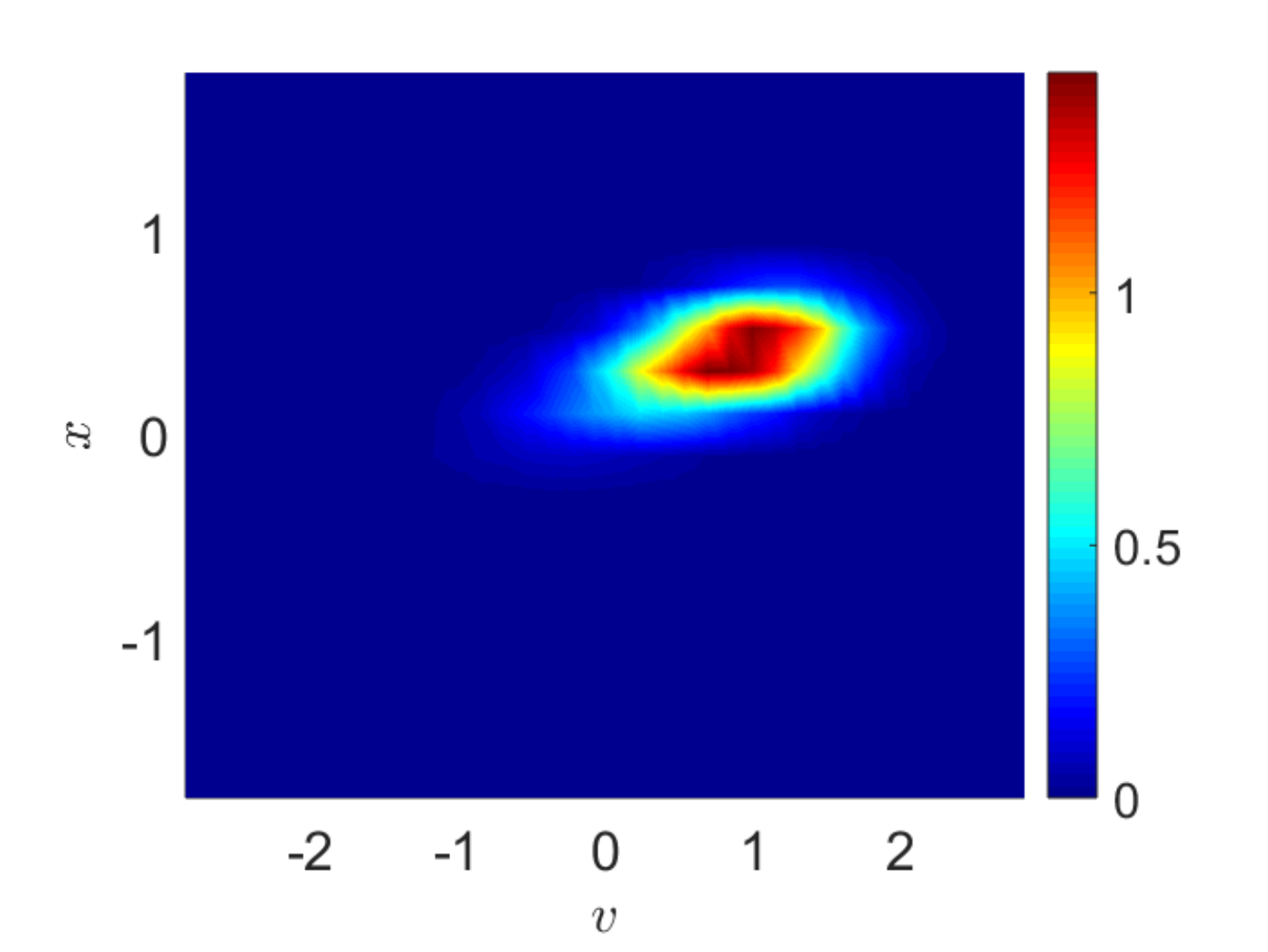}}
\subfigure[MCgPC, $t=1.0$]{
\includegraphics[scale=0.3]{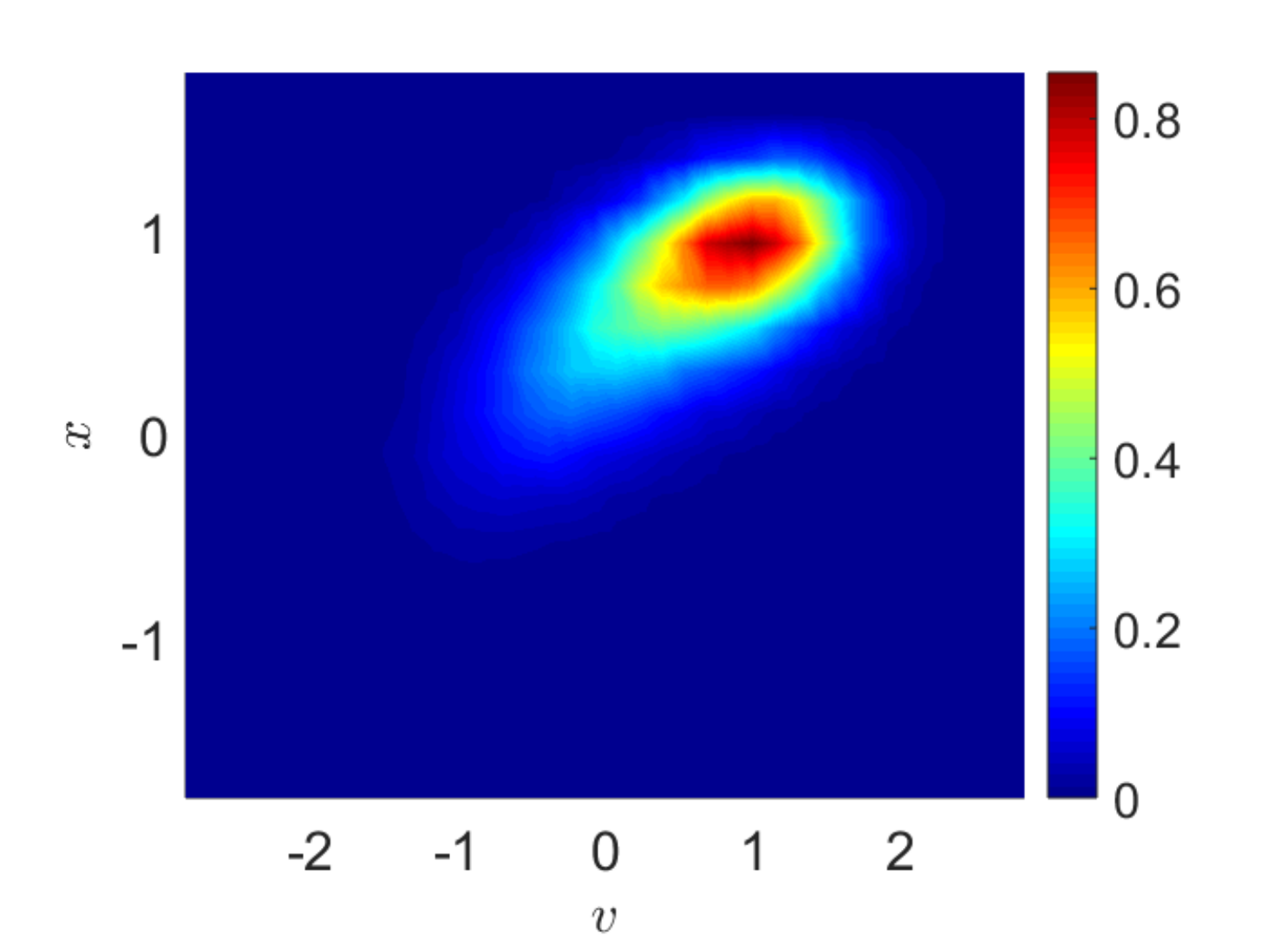}}
\subfigure[MCgPC, $t=5.0$]{
\includegraphics[scale=0.3]{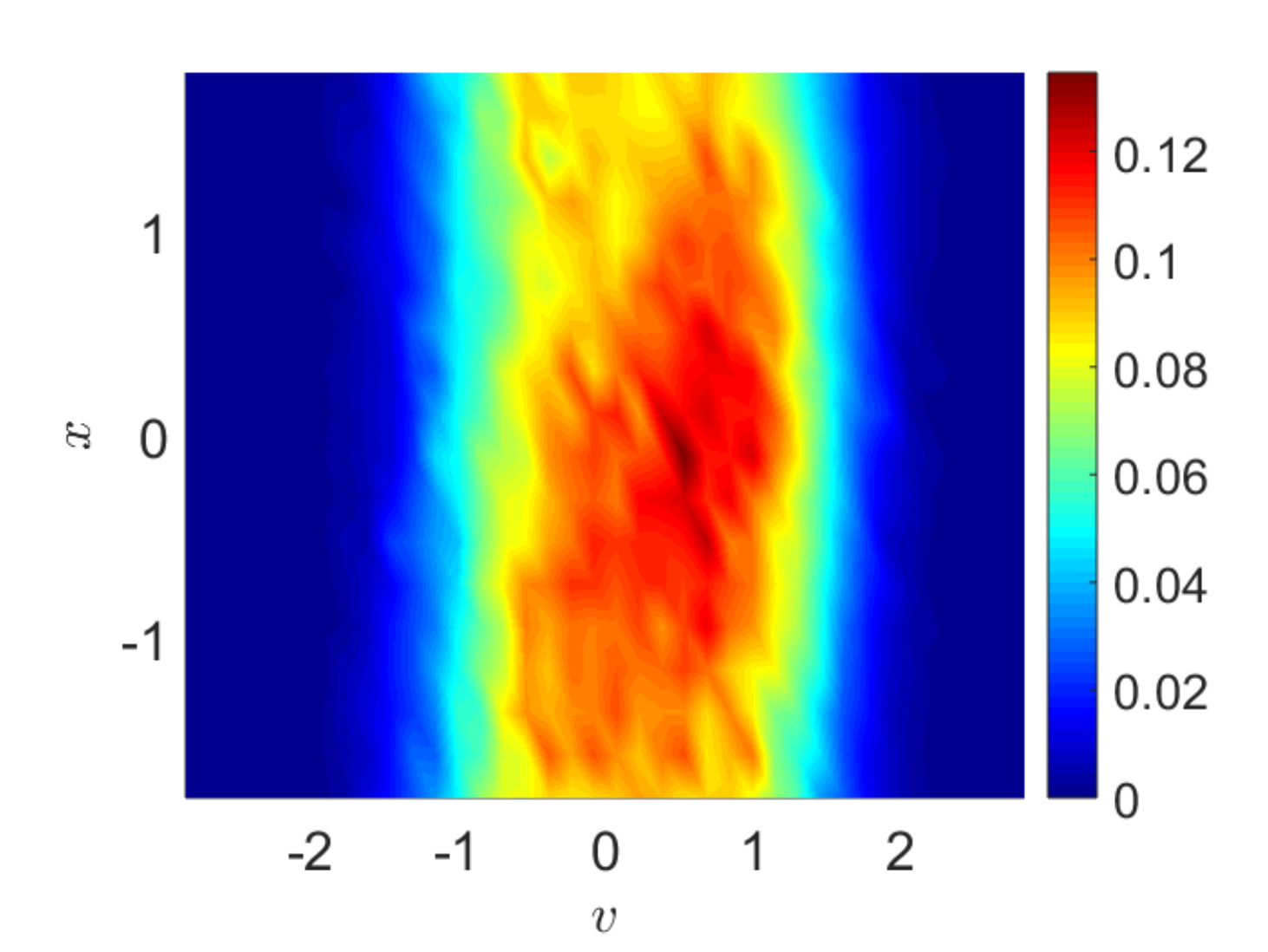}}
\caption{\textbf{Test 2A}. The same as Figure \ref{fig:inhom_loc_D02} but with $\bar D = 0.8$. }
\label{fig:inhom_loc_D08}
\end{figure}

\begin{figure}
\centering
\subfigure[$\bar D = 0.2$]{
\includegraphics[scale=0.45]{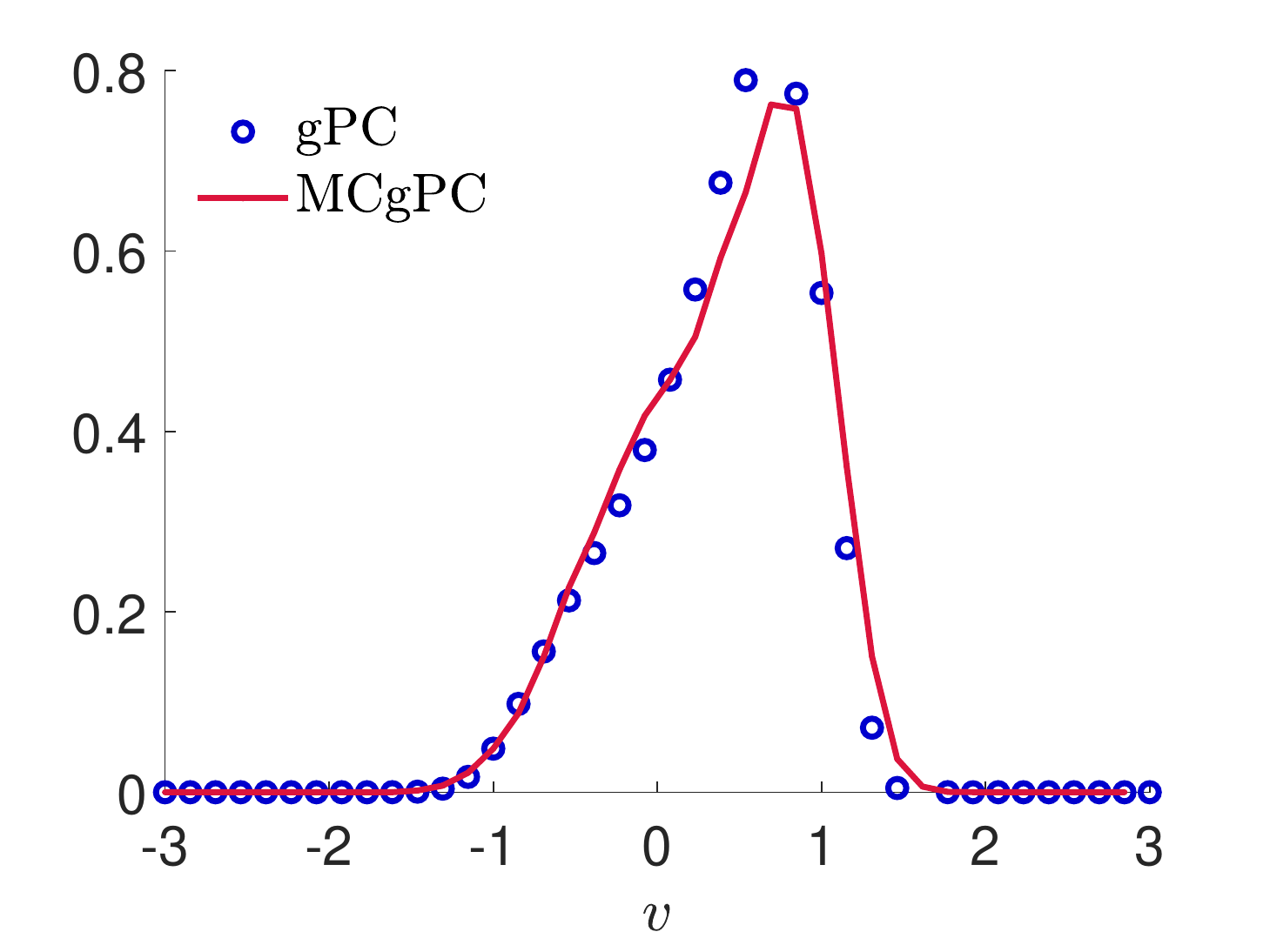}}
\subfigure[$\bar D = 0.8$]{
\includegraphics[scale=0.45]{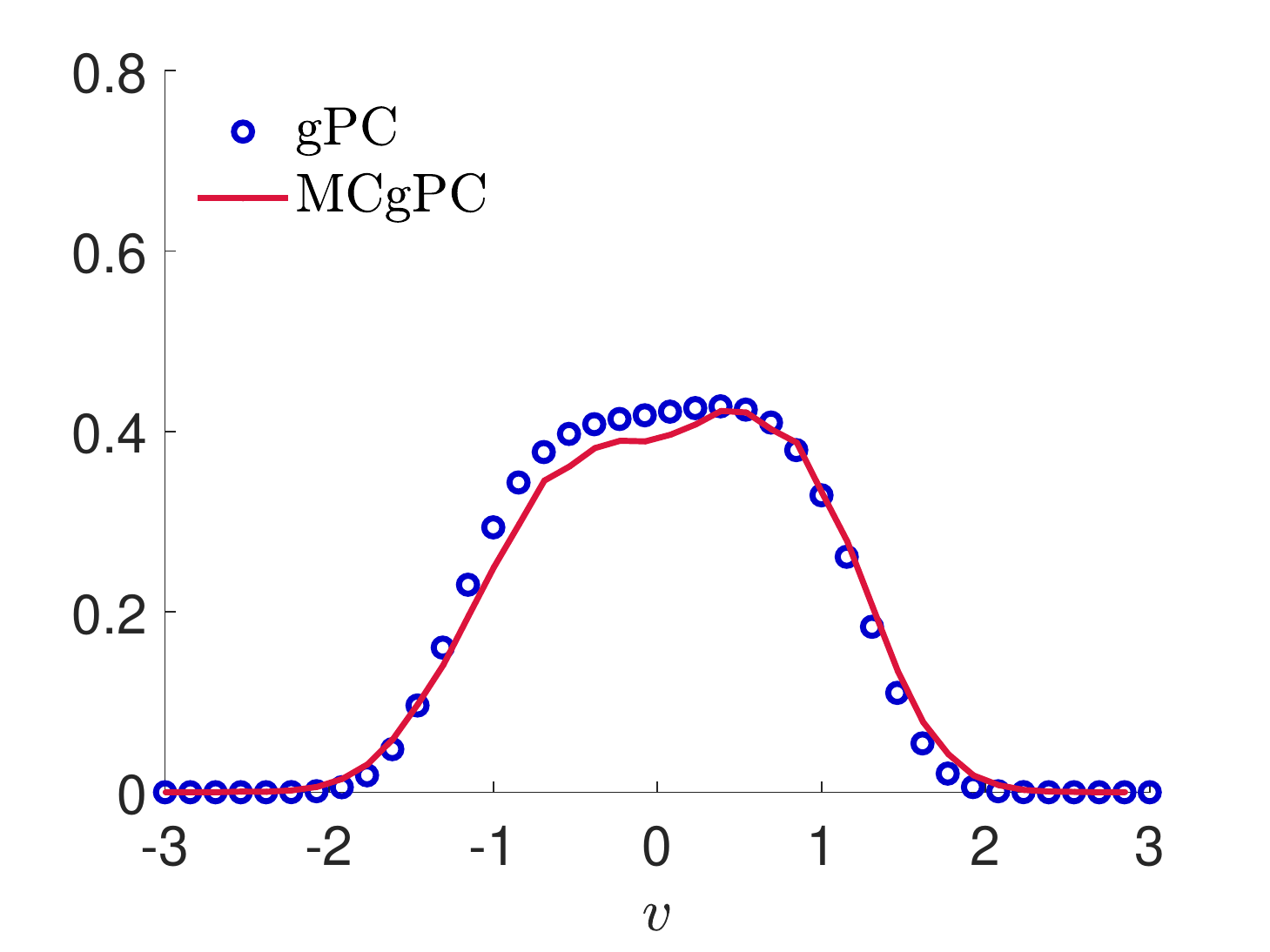}}
\caption{\textbf{Test 2A}. Marginal distributions $\int_{\RR^{d_x}}\mathbb E[f(\theta,x,v,t)]dx$, at time $t=5$, obtained from the numerical solutions of the inhomogeneous VFP with localized interactions and uncertain diffusion parameter reported in Figures \ref{fig:inhom_loc_D02}-\ref{fig:inhom_loc_D08}. }
\label{fig:kkk}
\end{figure}

\subsubsection{Test 2B. Cucker-Smale type interactions}

In this section we consider the evolution of statistical quantities obtained from \eqref{eq:MF_general} with space dependent interaction kernel of the Cucker-Smale (CS) type
\be\label{eq:CS_kernel}
P(x,x_*) = \dfrac{1}{(1 + |x-x_*|^2)^\gamma}, \qquad \gamma = 0.1. 
\ee
This choice of interaction has been introduced in \cite{CS} and received a lot of attention in the recent literature on kinetic models for emergent behavior. Without intending to review the huge related literature we mention \cite{CFRT,CFTV,HT} for an introduction on the topic. Recent efforts on CS dynamics in the UQ setting are \cite{APZa,HJ}. As we already mentioned, at the present times there are no analytical results regarding possible phase transitions for this highly nonlinear kernel in the VFP setting. The determination of sharp estimates on the possible phase transitions is an open problem which goes beyond the purpose of the present manuscript. Hence, to complete the overview on the introduced methods we give a numerical insight on the features of the resulting dynamics. 

 In order to observe the action of uncertainty in the system we consider a stochastic diffusion parameter $D(\theta) = \bar D + \lambda \bar D\theta$, with $\lambda = 10^{-1}$ and $\theta\sim \mathcal U([-1,1])$ in the cases $\bar D = 0.2$ and $\bar D= 0.8$. In Figure \ref{fig:CS_02}-\ref{fig:CS_08} we report the expected dynamics of the VFP model \eqref{eq:MF_general} with CS interactions and stochastic diffusion obtained through the MCgPC scheme. Periodic boundary conditions have been taken into account. In particular, a sample of $N = 10^5$ particles is considered for the  reconstruction of the expected density in the interval $[-2,2]\times[-3,3]$ with $N_x = 20$, $N_v = 40$ gridpoints. In the random space we considered $M=3$ modes for the gPC expansion. We can observe how the expected density moves from an ordered state for small values of diffusion to a chaotic isotropic for sufficiently big values of diffusion. We postpone to later works a more detailed insight on these phenomena for general interaction kernels. 

\begin{figure}
\subfigure[$t=0$]{
\includegraphics[scale=0.3]{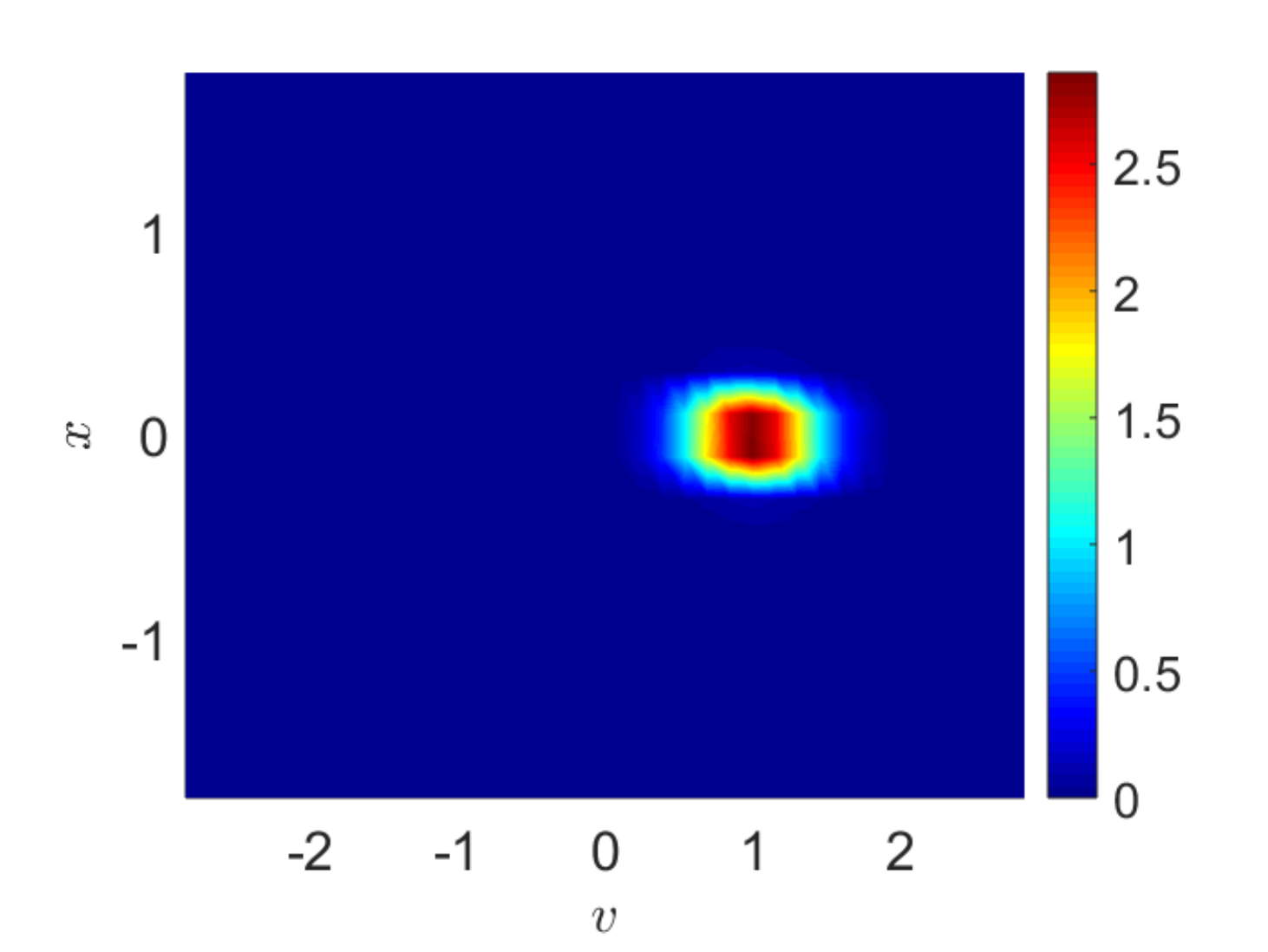}}
\subfigure[$t=1$]{
\includegraphics[scale=0.3]{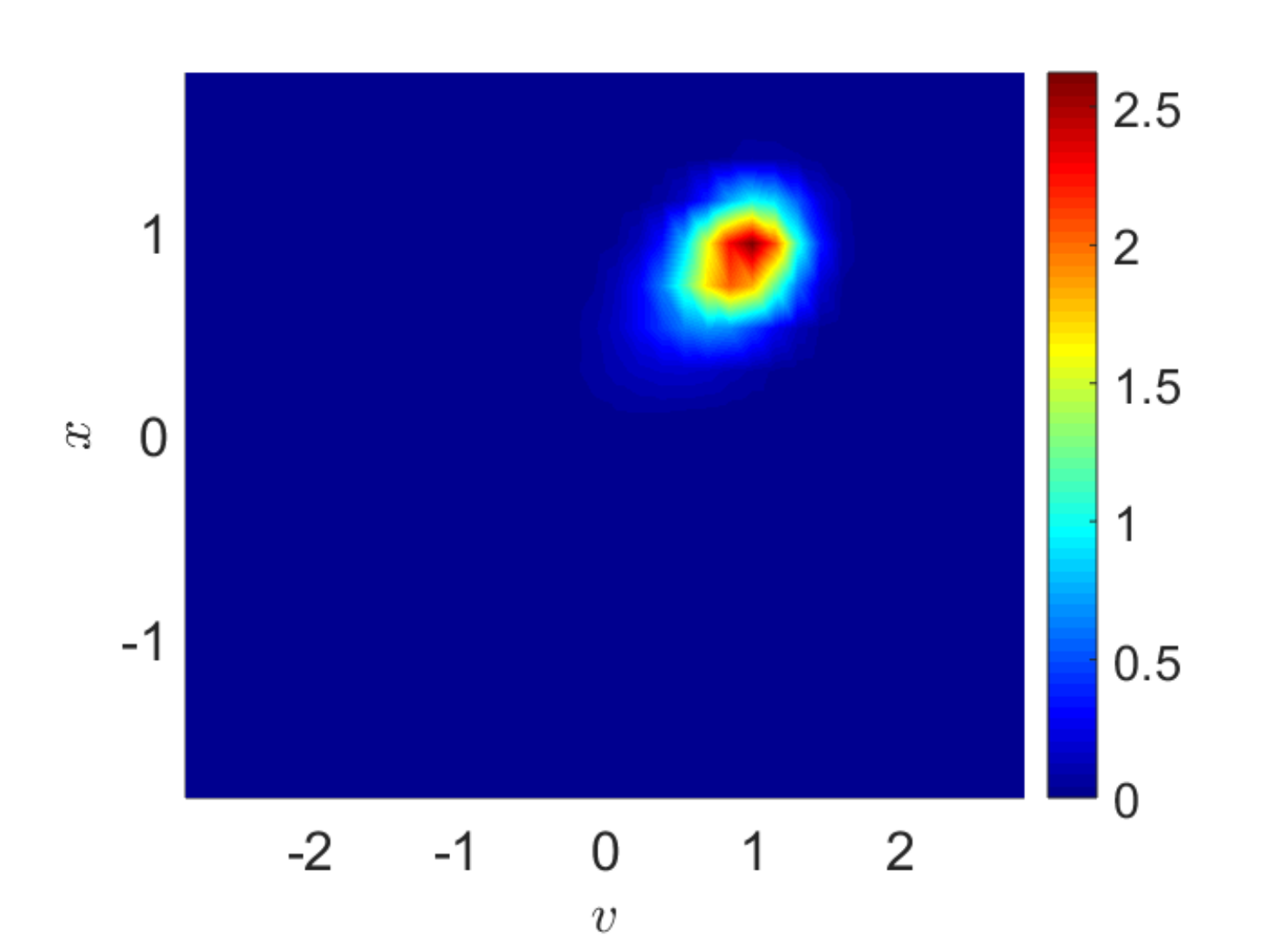}}
\subfigure[$t=2$]{
\includegraphics[scale=0.3]{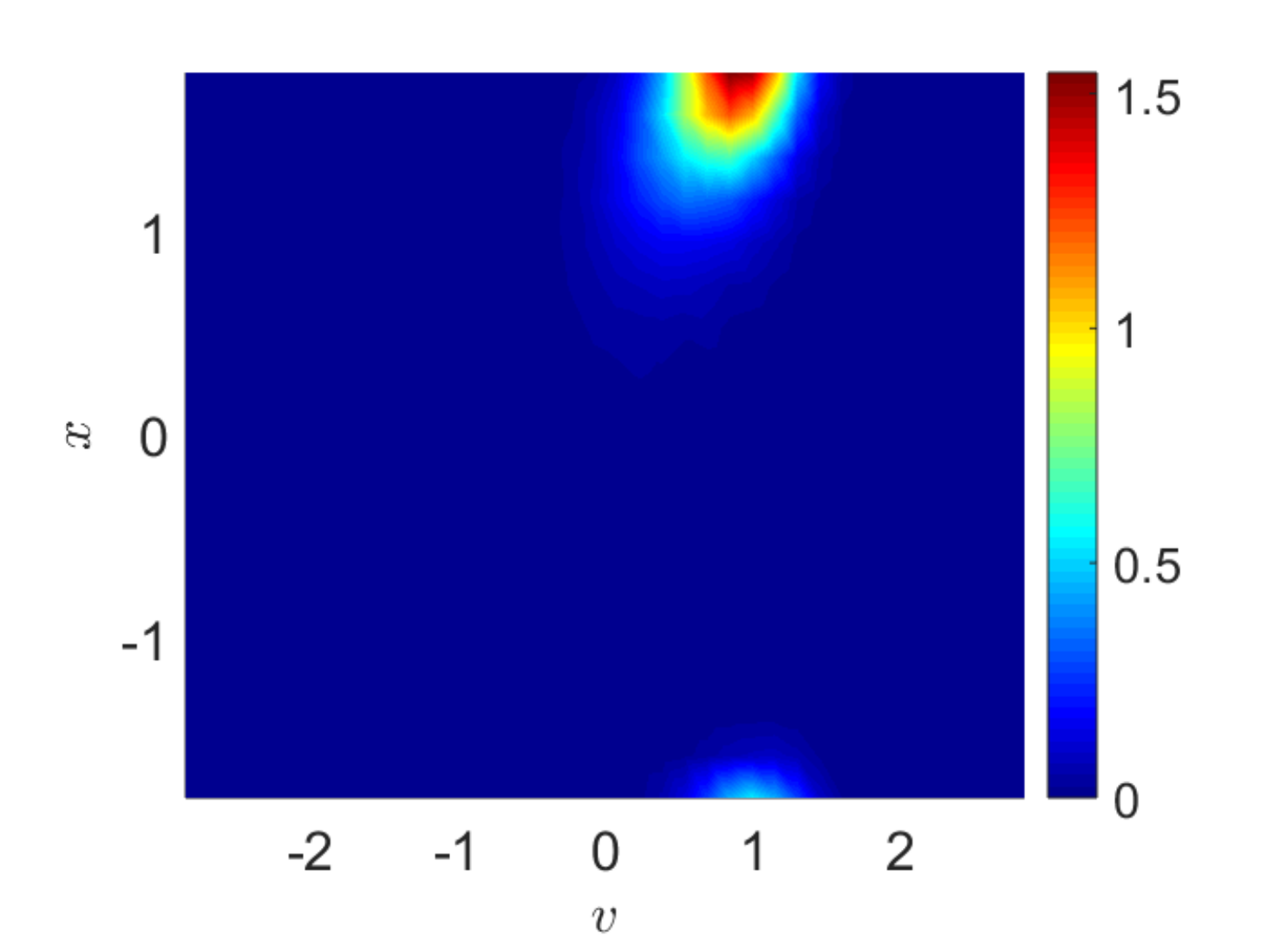}}\\
\subfigure[$t=3$]{
\includegraphics[scale=0.3]{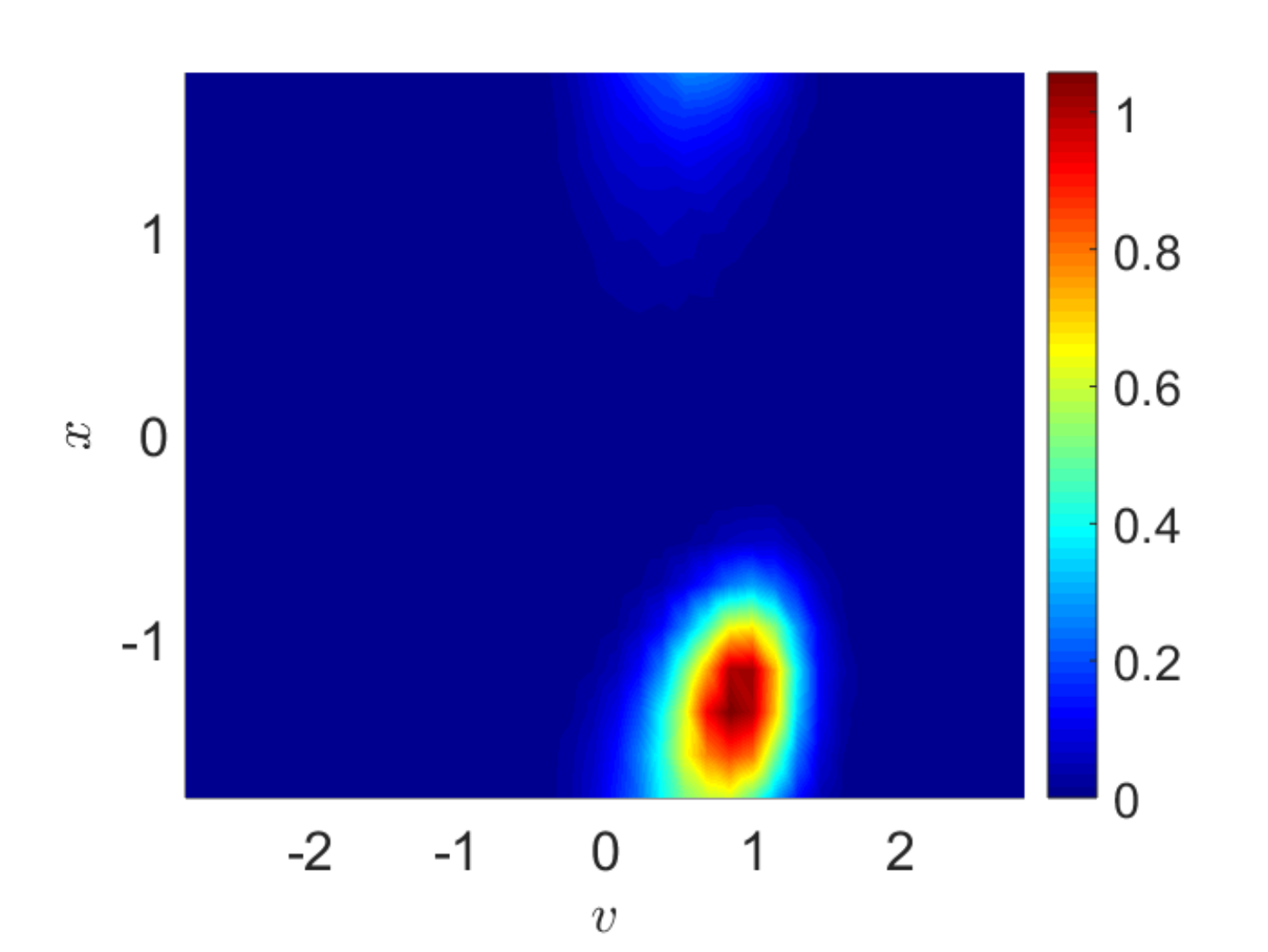}}
\subfigure[$t=4$]{
\includegraphics[scale=0.3]{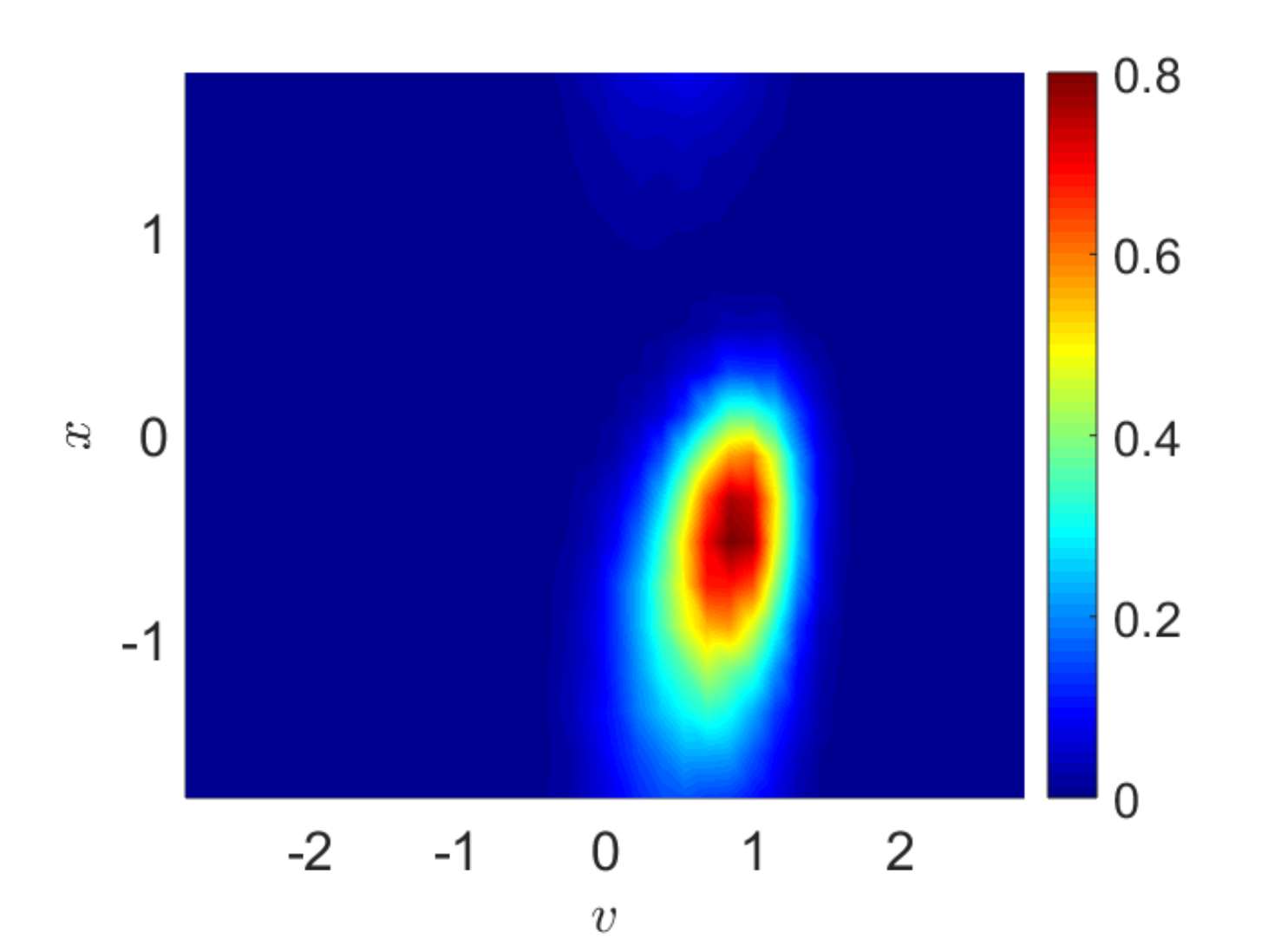}}
\subfigure[$t=5$]{
\includegraphics[scale=0.3]{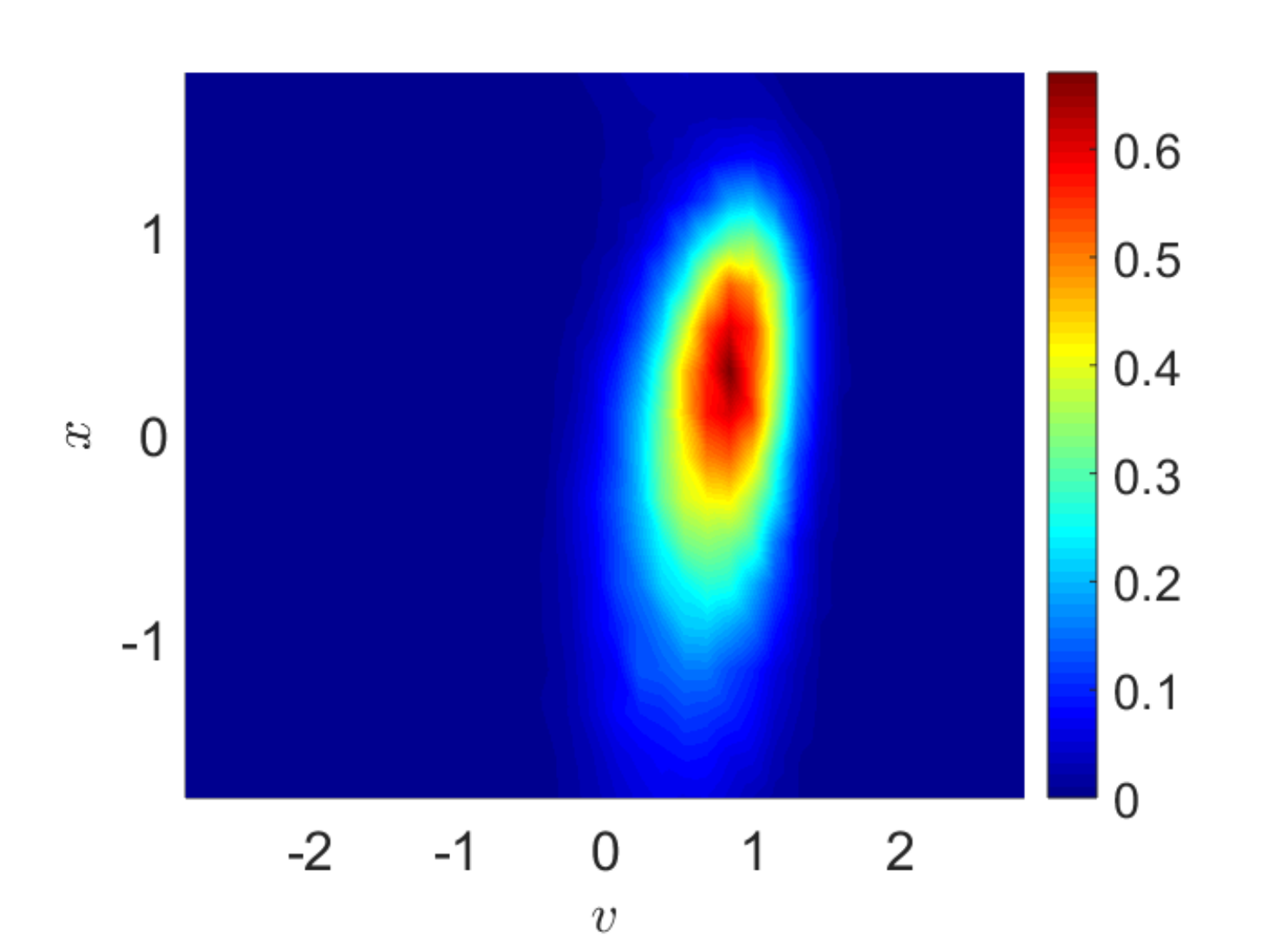}}
\caption{\textbf{Test 2B}. Evolution of the expected density for the VFP equation with CS interaction kernel \eqref{eq:CS_kernel} and with uncertain diffusion parameter in the time interval $[0,5]$. The uncertain diffusion parameter is $D(\theta) = \bar D + \lambda \bar D\theta$ with $\bar D = 0.2$ and $\lambda = 0.1$. Dynamics obtained through MCgPC scheme with $N= 10^5$ particles and fast evaluation of interaction with $S=10$. The gPC expansion is order $M=4$ andthe density is reconstructed in the phase space $(x,v)\in[-2,2]\times [-3,3]$ with $N_v=40$, $N_x = 20$. }
\label{fig:CS_02}
\end{figure}

\begin{figure}
\subfigure[$t=0$]{
\includegraphics[scale=0.3]{figure/inhomogeneous/CS/inhom_MCgPC_t0}}
\subfigure[$t=1$]{
\includegraphics[scale=0.3]{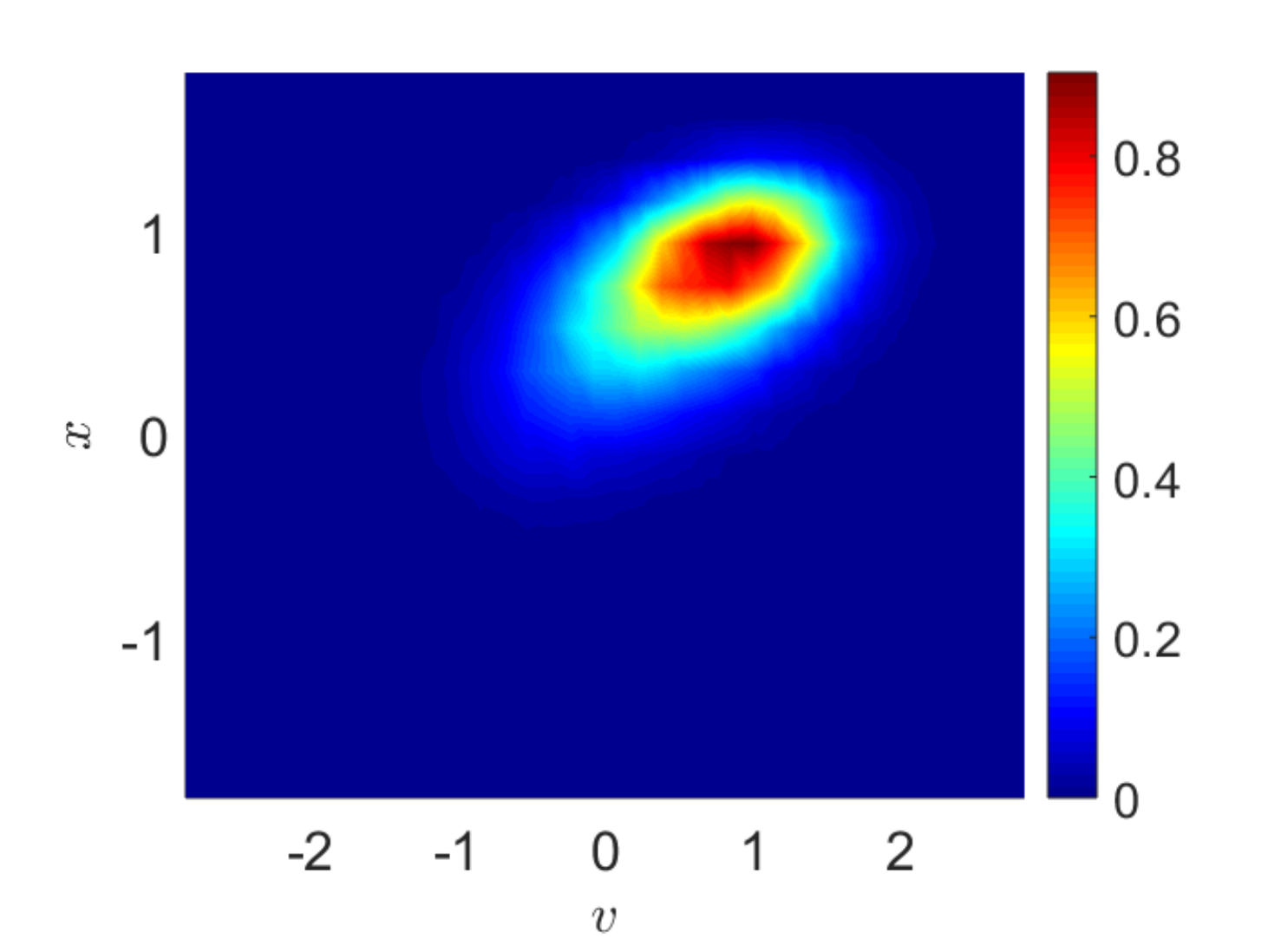}}
\subfigure[$t=2$]{
\includegraphics[scale=0.3]{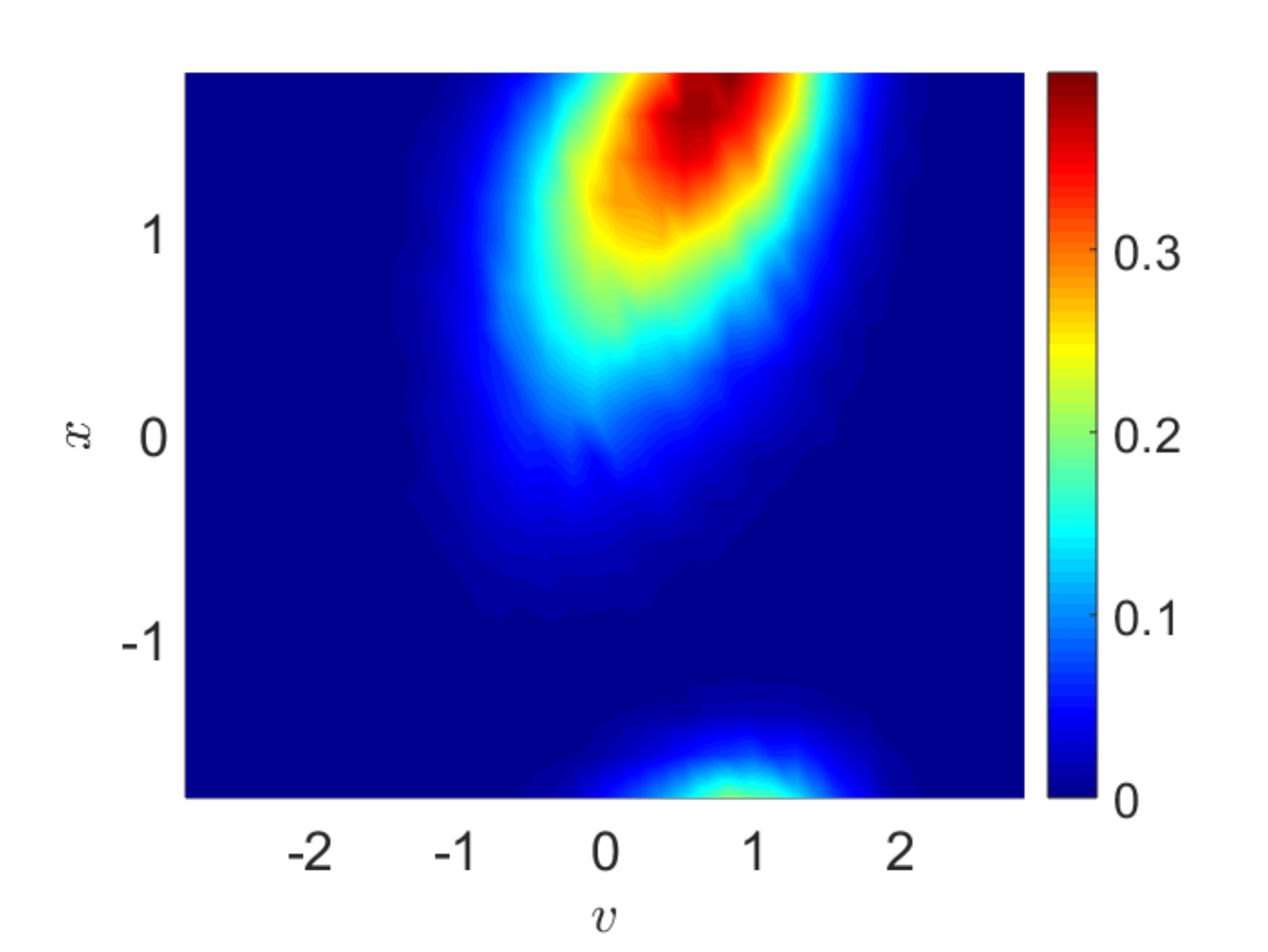}}\\
\subfigure[$t=3$]{
\includegraphics[scale=0.3]{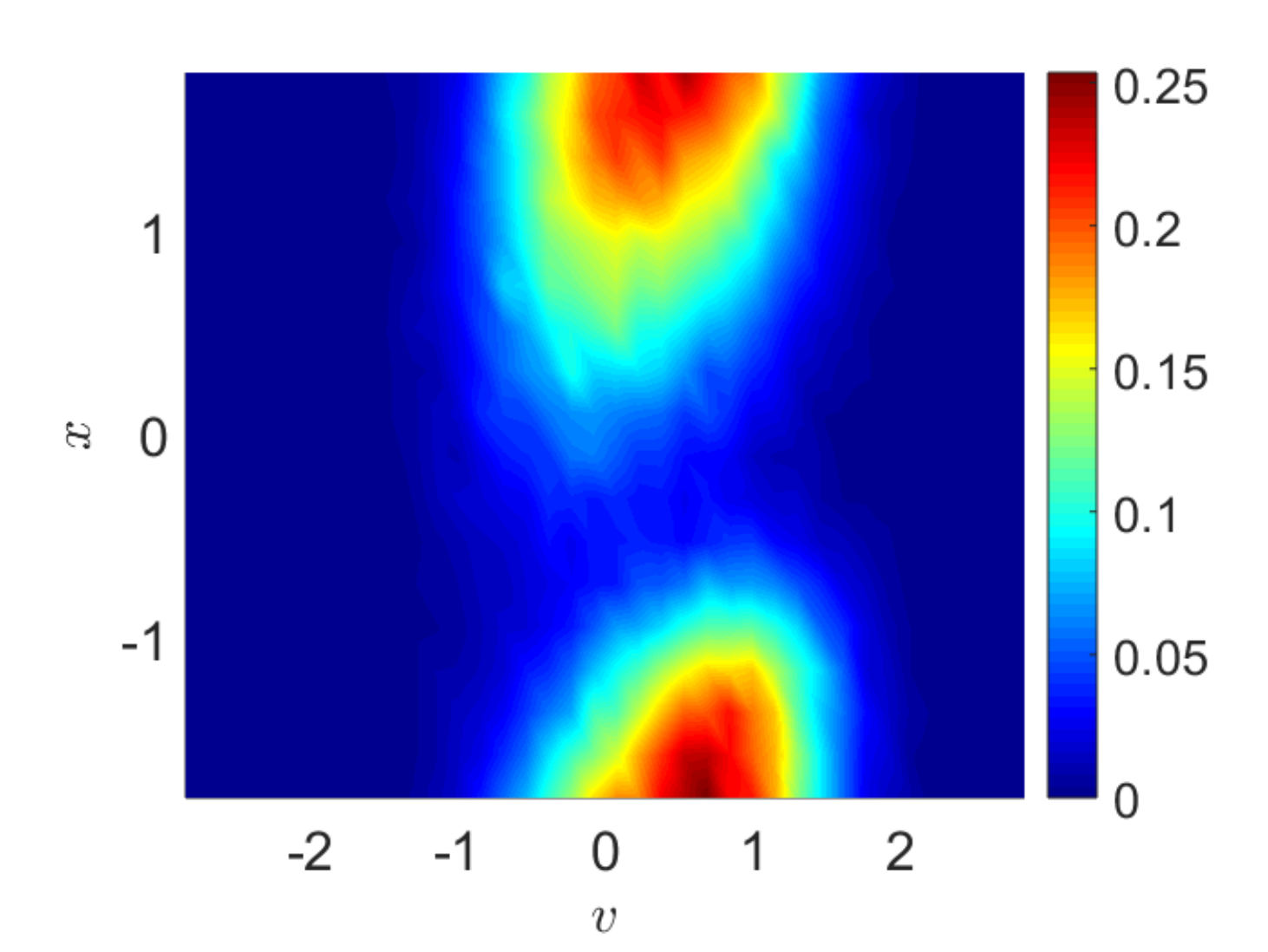}}
\subfigure[$t=4$]{
\includegraphics[scale=0.3]{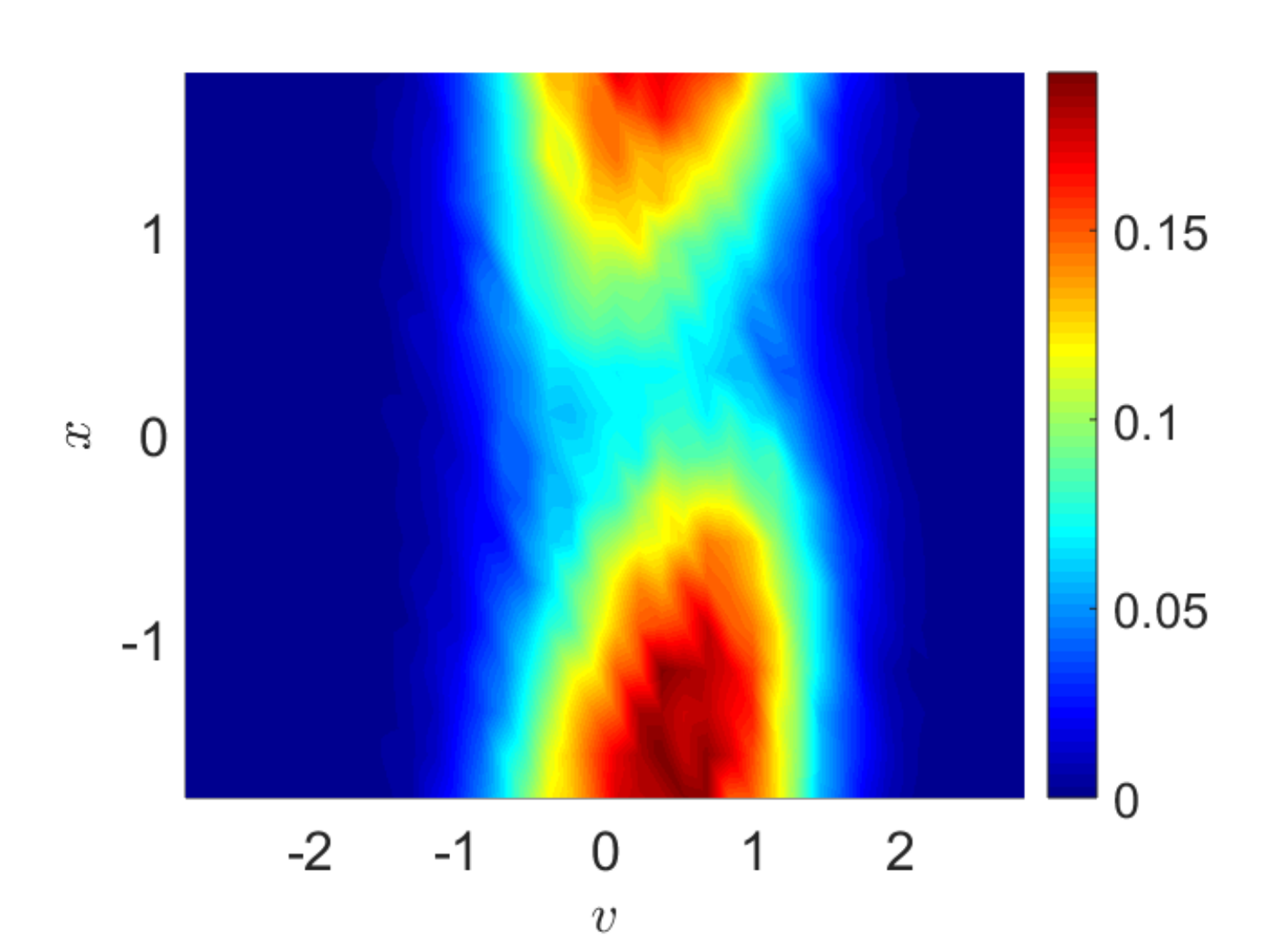}}
\subfigure[$t=5$]{
\includegraphics[scale=0.3]{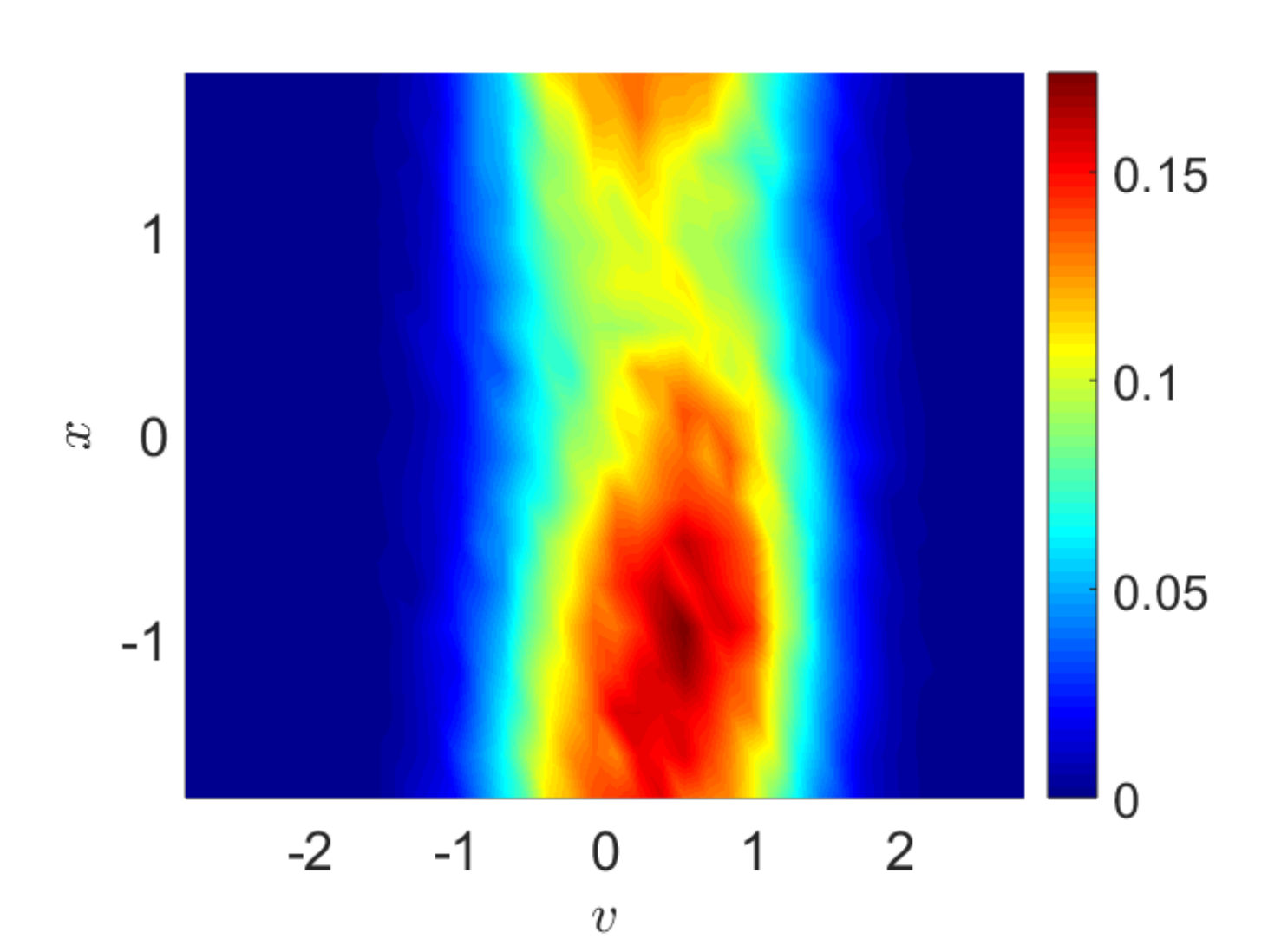}}
\caption{\textbf{Test 2B}. The same as Figure \ref{fig:CS_02} but with $\bar D = 0.8$. }
\label{fig:CS_08}
\end{figure}

 \section*{Conclusion}
The introduction of uncertainties in swarming dynamics is of paramount importance for the large-scale description of realistic phenomena. We concentrated in this manuscript on the case of Vlasov-Fokker-Planck equations for emerging collective behaviour with phase transitions depending on the strength of self-propelling and noise terms. We extended to these models the construction of nonnegative gPC schemes for kinetic-type equations. The proposed approach takes advantage of a Monte Carlo approximation of the kinetic equation in phase space which is coupled with a stochastic Galerkin gPC expansion. Several numerical tests were proposed both in the homogeneous and inhomogeneous settings, proving the effectiveness of the approach and shedding light on the action of uncertainties in terms of the stability of the phase transitions, We generically observe that they lead to a smoothing of the sharp transition values. Several extensions of the present work are possible from both the analytical and numerical viewpoints and in connection with more general kernels in the collision/interaction dynamics.   
 
\section*{Acknowledgments}
JAC was partially supported by the EPSRC grant number EP/P031587/1. 
MZ is member of the National Group on Mathematical Physics (GNFM) of INdAM (Istituto Italiano di Alta Matematica), Italy. This work has been written within the activities of the Excellence Project \textrm{CUP: E11G18000350001} of the Department of Mathematical Sciences "G. L. Lagrange" of Politecnico di Torino funded by MIUR (Italian Ministry for Education, University and Research).



\begin{thebibliography}{99}

\bibitem{ABCD}
P. Aceves-S\'anchez, M. Bostan, J. A. Carrillo, P. Degond. Hydrodynamic limits for kinetic flocking models of Cucker-Smale type. \emph{Math. Bio. Eng.}, 16(6):7882--7910, 2019. 

\bibitem{AlbiPareschi13}
G. Albi, and L. Pareschi. Binary interaction algorithms for the simulation of flocking and swarming dynamics, \emph{Multiscale Model. Simul.}, 11: 1--29, 2013.

\bibitem{APZa}
G. Albi, L. Pareschi, and M. Zanella. Uncertainty quantification in control problems for flocking models, \emph{Math. Probl. Eng.}, vol. 2015, 14 pp., 2015.

\bibitem{BCH}
R.~Bailo, J.~A. Carrillo, and J.~Hu. Fully discrete positivity-preserving and energy-dissipative schemes for nonlinear nonlocal equations with a gradient flow structure. Preprint \texttt{arXiv:1811.11502}, 2018.

\bibitem{Ball_etal}
M. Ballerini, N. Cabibbo, R. Candelier, A. Cavagna, E. Cisbani, I. Giardina, A. Orlandi, G. Parisi, A. Procaccini, M. Viale, and V. Zdravkovic. Empirical investigation of starling flocks: a benchmark study in collective animal behavior. \emph{Animal Behavior}, 76(1): 201--215, 2008.

\bibitem{BCCD}
A. B. T. Barbaro, J. A. Ca\~{n}izo, J. A. Carrillo, and P. Degond. Phase transitions in a kinetic flocking models of Cucker-Smale type, \emph{Multiscale Model. Simul.}, 14(3): 1063--1088, 2016.

\bibitem{BD}
A. B. T. Barbaro, and P. Degond. Phase transition and diffusion among socially interacting self--propelled agents, \emph{Discrete Cont. Dyn. Syst. - B}, 19: 1249--1278, 2014.

\bibitem{BGP10}
L. Bertini, G. Giacomin, and K. Pakdaman. Dynamical aspects of mean field plane rotators and the {K}uramoto model, {\em J. Stat. Phys.}, 138(1): 270--290, 2010.

\bibitem{BCC}
F. Bolley, J. A. Ca\~{n}izo, and J. A. Carrillo. Stochastic mean-field limit: non-Lipschitz forces and swarming. \emph{Math. Models Meth. Appl. Sci.}, 21(11): 2179--2210, 2011.

\bibitem{BC} 
M. Bostan, and J.-A. Carrillo. Reduced fluid models for self-propelled populations, interacting through alignment, \emph{Math. Models Methods Appl. Sci.}, 27: 1255--1299, 2017.

\bibitem{BCM}
M. Burger, V. Capasso, D. Morale. On an aggregation model with long and short range interactions. \emph{Nonlinear Anal. Real World Appl.}, 8(3): 939--958, 2007.

\bibitem{Caflisch}
R. E. Caflisch. Monte Carlo and Quasi Monte Carlo methods, \emph{Acta Numerica}, 7: 1--49, 1998.

\bibitem{CCH}
J. A. Carrillo, A. Chertock, Y. Huang. A finite-volume method for nonlinear nonlocal equations with a gradient flow structure. \emph{Commun. Comput. Phys.}, 17(1): 233--258, 2015.

\bibitem{CCP}
J. A. Carrillo, Y.-P. Choi, and L. Pareschi. Structure preserving schemes for the continuum Kuramoto model: phase transitions. \emph{J. Comp. Phys.}, 376: 365--389, 2019.

\bibitem{CFRT}
J. A. Carrillo, M. Fornasier, J. Rosado, and G. Toscani. Asymptotic flocking dynamics for the kinetic Cucker--Smale model, \emph{SIAM J. Math. Anal.}, 42(1): 218--236, 2010.

\bibitem{CFTV}
J. A. Carrillo, M. Fornasier, G. Toscani, and F. Vecil. Particle, kinetic and hydrodynamic models of swarming, In \emph{Mathematical Modeling of Collective Behavior in Socio--Economic and Life Sciences}, Eds. G. Naldi, L. Pareschi and G. Toscani, Modeling and Simulation in Science, Engineering and Technology, Birkh{\"a}user, Boston, 2010. 

\bibitem{CGPS}
J. A. Carrillo, R. S. Gvalani, G. A. Pavliotis, A. Schlichting. Long-time behavior and phase transitions for the McKean-Vlasov equation on the torus. Preprint \texttt{arXiv:1806.01719}, 2018. 

\bibitem{CPZ}
J. A. Carrillo, L. Pareschi, and M. Zanella. Particle based gPC methods for mean--field models of swarming with uncertainty, \emph{Comm. Comput. Phys.}, 25(2): 508--531, 2019.

\bibitem{CP}
L. Chayes, and V. Panferov. The {M}c{K}ean-{V}lasov equation in finite volume, \emph{J. Stat. Phys.}, 138: 351--380, 2010.

\bibitem{CS}
F. Cucker, and S. Smale. Emergent behavior in flocks, \emph{IEEE Trans. Automat. Contr.}, 52(5): 852--862, 2007.

\bibitem{DP15}
G. Dimarco, and L. Pareschi. Numerical methods for kinetic equations, \emph{Acta Numerica}, 23: 369--520, 2014.

\bibitem{DPZ}
G. Dimarco, L. Pareschi, and M. Zanella. Uncertainty quantification for kinetic models in socio--economic and life sciences. In \emph{Uncertainty Quantification for Hyperbolic and Kinetic Equations}, eds.: S. Jin, and L. Pareschi, SEMA--SIMAI Springer Series, vol. 14, pp. 151--191, 2017.

\bibitem{DFL}
P. Degond, A. Frouvelle, and J.-G. Liu. Macroscopic limits and phase transitions in a system of self-propelled particles, \emph{J. Nonlin. Sci.}, 23(3): 427--456, 2013.

\bibitem{DM}
P. Degond, and S. Motsch. Continuum limit of self-driven particles with orientation interaction. \emph{Math. Mod. Meth. Appl. Sci.}, 18.supp01: 1193--1215, 2008.

\bibitem{DCBC}
M. R. D'Orsogna, Y. L. Chuang, A. L. Bertozzi, and L. S. Chayes. Self-propelled particles with soft-core interactions: patterns, stability, and collapse. \emph{Phys. Rev. Lett.}, 96: 104302, 2006.

\bibitem{DFT}
R. Duan, M. Fornasier, and G. Toscani. A kinetic flocking model with diffusion. \emph{Commun. Math. Phys.}, 300: 95--145, 2010.

\bibitem{Fun}
D. Funaro. \emph{Polynomial Approximation of Differential Equations}, Springer-Verlag, Berlin, 1992.

\bibitem{GPY}
J. Garnier, G. Papanicolaou, and T.-W. Yang. Consensus convergence with stochastic effects. \emph{Vietnam J. Math.}, 45: 51--75, 2017.

\bibitem{GP}
S. N. Gomes, and G. A. Pavliotis. Mean field limits for interacting diffusions in a two-scale potential, \emph{J. Nonlinear Sci.}, 28(3): 905--941, 2018.

\bibitem{HaJin}
S.-Y. Ha, and S. Jin. Local sensitivity analysis for the Cucker-Smale model with random inputs, \emph{Kinet. Relat. Mod.}, 11(4): 859--889, 2018.

\bibitem{HJJ}
S.-Y. Ha, S. Jin, and J. Jung. Local sensitivity analysis for the Kuramoto model with random inputs in a large coupling regime, \emph{Netw. Heterog. Media}, to appear. 

\bibitem{HT}
S.-Y. Ha, and E. Tadmor. From particle to kinetic and hydrodynamic descriptions of flocking, \emph{Kinet. Relat. Mod.}, 1(3): 415--435, 2008.

\bibitem{HE}
R. W. Hockney, J. K. Eastwood. \emph{Computer Simulation using Particles}, McGraw Hill International Book Co., 1981. 

\bibitem{HJ}
J. Hu, and S. Jin. Uncertainty Quantification for Kinetic Equations. In \emph{Uncertainty Quantification for Hyperbolic and Kinetic Equations}, eds.: S. Jin, and L. Pareschi, SEMA-SIMAI Springer Series, vol. 14, pp. 193--229, 2017.

\bibitem{HJX}
 J. Hu, S. Jin, and D. Xiu. A stochastic Galerkin method for Hamilton-Jacobi equations with uncertainty, \emph{SIAM J. Sci. Comp.}, 37: A2246-A2269, 2015.
 
 \bibitem{JLL}
 S. Jin, L. Li, and J.-G. Liu. Random batch methods (RMB) for interacting particle systems.  \emph{J. Comput. Phys.}, 400: Article number 108877, 2020. 
  
\bibitem{Kur81}
Y. Kuramoto. Rhythms and turbulence in populations of chemical oscillators, {\em Phys. A}, 106(1-2):128--143, 1981. Statphys 14 (Proc. Fourteenth Internat. Conf. Thermodynamics and Statist. Mech., Univ. Alberta, Edmonton, Alta., 1980).

\bibitem{MT}
S. Motsch, and E. Tadmor. Heterophilious dynamics enhances consensus, \emph{SIAM Rev.} 56(4) : 577--621, 2014. 

\bibitem{PT}
L. Pareschi, and G. Toscani. \emph{Interacting Multiagent Systems: Kinetic Equations and Monte Carlo Methods}, Oxford University Press, 2013. 

\bibitem{PZ1}
L. Pareschi, and M. Zanella. Structure--preserving schemes for nonlinear Fokker--Planck equations and applications. \emph{J. Sci. Comput.},  74(3): 1575--1600, 2018.

\bibitem{PZ2}
L. Pareschi, and M. Zanella. Structure preserving schemes for mean-field equations of collective behavior. In Westdickenberg M. Klingenberg C., editor, \emph{Theory, Numerics and Applications of Hyperbolic Problems II. HYP 2016}, volume 237 of \emph{Springer Proceedings in Mathematics \& Statistics}, pages 405--421. Springer, Cham, 2018. 

\bibitem{SSK88}
H. Sakaguchi, S. Shinomoto, and Y. Kuramoto. Phase transitions and their bifurcation analysis in a large population of active rotators with mean-field coupling, {\em Progr. Theoret. Phys.}, 79: 600--607, 1988.

\bibitem{CWS}
C.-W. Shu. High order weighted essentially nonoscillatory schemes for convection dominated problems, \emph{SIAM Rev.}, 51(1): 82--126, 2009.

\bibitem{TZ}
A. Tosin, and M. Zanella. Boltzmann--type models with uncertain binary interactions, \emph{Commun. Math. Sci.}, 16(4): 963--985, 2018.

\bibitem{TZ2}
A. Tosin, and M. Zanella. Uncertainty damping in kinetic traffic models by driver-assist controls. Preprint \texttt{arXiv:1904.00257}, 2019. 

\bibitem{X}
D. Xiu. \emph{Numerical Methods for Stochastic Computations}, Princeton University Press, 2010.

\bibitem{XK}
D. Xiu, and G. E. Karniadakis. The Wiener-Askey polynomial chaos for stochastic differential equations, \emph{SIAM J. Sci. Comput.}, 24(2): 614--644, 2002.

\bibitem{ZL}
X. Zhong, and Q. Li. Galerkin methods for stationary radiative transfer equations with uncertain coefficients, \emph{J. Sci. Comput.}, 76(2): 1105--1126, 2018. 

\bibitem{ZJ} 
Y. Zhu, and S. Jin. The Vlasov-Poisson-Fokker-Planck system with uncertainty and a one-dimensional asymptotic-preserving method , \emph{Multiscale Model. Simul.},  15(4): 1502–1529, 2017.

\bibitem{Z}
M. Zanella. Structure preserving stochastic Galerkin methods for Fokker-Planck equations with background interactions. \emph{Math.Comp. Simul.}, 168:28-47, 2020. 

\end{thebibliography}
\end{document}